\documentclass[12pt]{amsart}
\usepackage{SSdefn}
\setcounter{tocdepth}{1}
\usepackage{eucal}
\usepackage{tikz-cd}
\usepackage{float}
\usepackage{braids}
\usepackage{caption}

\newcommand{\FI}{\mathbf{FI}}
\newcommand{\FS}{\mathbf{FS}}
\newcommand{\gen}{\mathrm{gen}}
\newcommand{\uconf}{\mathrm{uConf}}

\DeclareMathOperator{\maxdeg}{maxdeg}
\DeclareMathOperator{\cor}{cor}

\newcommand{\coloneq}{\mathrel{\mathop:}\mkern-1.2mu=}

\title[{Periodicity in the cohomology of symmetric groups via divided powers}]{Periodicity in the cohomology of symmetric groups \\via divided powers}
\date{\today}

\author{Rohit Nagpal}
\address{Department of Mathematics, University of Chicago, Chicago, IL}
\email{\href{mailto:nagpal@math.uchicago.edu}{nagpal@math.uchicago.edu}}
\urladdr{\url{http://math.uchicago.edu/~nagpal/}}

\author{Andrew Snowden}
\address{Department of Mathematics, University of Michigan, Ann Arbor, MI}
\email{\href{mailto:asnowden@umich.edu}{asnowden@umich.edu}}
\urladdr{\url{http://www-personal.umich.edu/~asnowden/}}

\thanks{AS was supported by NSF grants DMS-1303082 and DMS-1453893 and a Sloan Fellowship .}

\subjclass[2010]{%
	20C30, 
	20J06
	}

\begin{document}

\begin{abstract}
A famous theorem of Nakaoka asserts that the cohomology of the symmetric group stabilizes. The first author generalized this theorem to non-trivial coefficient systems, in the form of $\FI$-modules over a field, though one now obtains periodicity of the cohomology instead of stability. In this paper, we further refine these results. Our main theorem states that if $M$ is a finitely generated $\FI$-module over a noetherian ring $\bk$ then $\bigoplus_{n \ge 0} \rH^t(S_n, M_n)$ admits the structure of a $\bD$-module, where $\bD$ is the divided power algebra over $\bk$ in a single variable, and moreover, this $\bD$-module is ``nearly'' finitely presented. This immediately recovers the periodicity result when $\bk$ is a field, but also shows, for example, how the torsion varies with $n$ when $\bk=\bZ$. Using the theory of connections on $\bD$-modules, we establish sharp bounds on the period in the case where $\bk$ is a field. We apply our theory to obtain results on the modular cohomology of Specht modules and the integral cohomology of unordered configuration spaces of manifolds. 
\end{abstract}

\maketitle
\tableofcontents

\section{Introduction}

\subsection{The main theorem} 
A famous theorem of Nakaoka asserts that the cohomology groups of the symmetric groups stabilize: for $n>2t$, the restriction map
\begin{displaymath}
\rH^t(S_n, \bk) \to \rH^t(S_{n-1}, \bk)
\end{displaymath}
is an isomorphism, for any coefficient ring (or even abelian group) $\bk$, with the symmetric groups acting trivially. It is natural to ask if there is some way to generalize this theorem to allow non-trivial coefficients. Thus suppose that for each $n \ge 0$ we have a representation $M_n$ of $S_n$ over a ring $\bk$. We can then consider the groups $\rH^t(S_n, M_n)$, for $t$ fixed and $n$ varying. Obviously, to hope for any sort of relationship between these groups we must assume that the $M_n$ form a ``coherent system'' of representations, in some sense.

In recent years, several kinds of algebraic structures have been studied that can rightfully be called ``coherent systems'' of representations. In this paper, we focus on $\FI$-modules, popularized by Church, Ellenberg, and Farb \cite{fimodules}, though see \S \ref{ss:future} for additional discussion. An $\FI$-module over $\bk$ can be defined as a system of representations $\{M_n\}_{n \ge 0}$, as above, with transition maps $M_n \to M_{n+1}$ satisfying certain conditions; see \S \ref{ss:fi} below. In a general $\FI$-module, the transition maps could all be~0, and so the $M_n$'s need not be related. However, in a \emph{finitely generated} $\FI$-module, the $M_n$'s are very closely related. In this setting, the first author has found a generalization of Nakaoka's theorem:

\begin{theorem}[\cite{nagpal}] \label{nagpalthm}
Suppose $\bk$ is a field of characteristic $p$. Let $M$ be a finitely generated $\FI$-module over $\bk$, and fix $t \ge 0$. Then $\dim_{\bk}{\rH^t(S_n, M_n)}$ is periodic for $n$ sufficiently large, with period a power of $p$.
\end{theorem}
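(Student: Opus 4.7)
The plan is to reduce via a finite resolution to ``induced'' $\FI$-modules, where Shapiro's lemma and Nakaoka's stabilization theorem control the cohomology, and then to analyze how the resulting spectral sequence's differentials depend on $n$. The periodicity, with period a power of $p$, should emerge from the classical periodicity of binomial coefficients modulo $p$.

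First I would invoke the structure theory of $\FI$-modules over a field: every finitely generated $\FI$-module $M$ admits, possibly after a harmless correction (e.g.\ truncating out finite-length torsion), a finite resolution $P^{\bullet} \to M$, of length bounded independently of $n$, in which each $P^i$ is a finite direct sum of induced modules whose degree-$n$ piece has the form $\mathrm{Ind}_{S_{n-d}\times S_d}^{S_n}(\bk\boxtimes W)$ for some representation $W$ of some $S_d$. Evaluating at $n$ and taking $S_n$-cohomology yields a hypercohomology spectral sequence
\[
E_1^{p,q} = \rH^q(S_n, P^p_n) \;\Longrightarrow\; \rH^{p+q}(S_n, M_n).
\]
By Shapiro's lemma and the K\"unneth formula, $\rH^q\bigl(S_n,\mathrm{Ind}_{S_{n-d}\times S_d}^{S_n}(\bk\boxtimes W)\bigr)$ decomposes as $\bigoplus_{i+j=q} \rH^i(S_{n-d},\bk) \otimes \rH^j(S_d,W)$, and Nakaoka's classical theorem then says that each $E_1^{p,q}$ is eventually constant in $n$, of bounded total dimension (depending only on $M$ and $t$).

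The differentials, however, depend genuinely on $n$. A map between induced $\FI$-modules in the resolution is encoded combinatorially by a formal $\bk$-linear sum over injections of finite sets, and upon passing through Shapiro's isomorphism it becomes a transfer/restriction-type map whose matrix entries are integer linear combinations of binomial coefficients $\binom{n-a}{b}$ with $a,b$ bounded in terms of $M$ and $t$ alone. By Lucas' theorem, each $\binom{n}{b} \bmod p$ is periodic in $n$ with period a power of $p$ depending only on $b$, so taking a common refinement, all entries of all differentials on all pages of the spectral sequence are eventually periodic in $n$ modulo $p$ with a single $p$-power period. Since the $E_1$-page has bounded dimension, this forces eventual periodicity of the $E_{\infty}$-terms and hence of $\dim_{\bk}\rH^t(S_n, M_n)$. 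The hardest step will be the last one: upgrading periodicity of the \emph{entries} of the differentials to periodicity of their \emph{ranks}, since rank can drop at sporadic values of $n$. I would expect this to require organizing the collection of all such differentials into a single finitely generated algebraic object---for instance, a module over a polynomial or divided-power ring in the variable $n$---and exploiting the structure theory of that object to force rank to be constant on each residue class of $n$ modulo some $p^N$.
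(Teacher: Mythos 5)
Your high-level strategy matches the paper's almost exactly: resolve $M$ (up to torsion) by induced $\FI$-modules, apply Shapiro's lemma and Nakaoka to control the $E_1$ page, and then organize the variation in $n$ into an auxiliary algebraic structure over the divided power ring. You even correctly anticipate that a divided-power module structure is the right ``single finitely generated algebraic object'' — this is precisely what the paper does by putting a $\bD$-module structure on $\Gamma^t(M)=\bigoplus_n\rH^t(S_n,M_n)$.

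The genuine gap is in the step you treat as a plausible assertion: that the differentials, after passing through Shapiro, have matrix entries which are integer combinations of binomial coefficients $\binom{n-a}{b}$ with $a,b$ bounded independently of $n$. This claim is not at all automatic, and it is essentially the content of the hard work in the paper. The maps are restriction/corestriction composites between $S_n$-cohomology groups of induced modules, and extracting a stable matrix from them requires choosing compatible bases across all $n$ — that compatibility is exactly what the theory of connections provides. The paper's Proposition~\ref{prop:induced-connection} shows that the restriction map $\rH^t(S_n\times S_d,V)\to\rH^t(S_{n-1}\times S_d,V)$ is a connection on the $\bD$-module $\Gamma^t(\cI(V))$ (using Dold's lemma, Lemma~\ref{lem:dold:simple}, which is the Leibniz rule in disguise), and from this it deduces $\Gamma^t(\cI(V))\cong M_0\otimes_\bk\bD$ for a bounded $M_0$, which is the precise replacement for your binomial-coefficient claim. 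Proposition~\ref{prop:compat-conn} is then what controls the differentials: maps $\cI(V)\to\cI(W)$ induce maps commuting with the $q$-fold connection once $q$ exceeds the generating degree — that is where Lucas-type periodicity actually enters.

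A minor conceptual slip: your stated ``hardest step,'' upgrading periodicity of entries to periodicity of ranks, is actually not where the difficulty lies. If, as you assert, the $E_1$ terms are literally identical for $n$ and $n+q$ (both as spaces and as matrices) once $n$ is large, then all higher pages and hence $E_\infty$ agree automatically, so there is no sporadic rank drop to worry about. The real issue is earlier: establishing that the matrices actually are eventually literally periodic, which, as above, is where the paper expends its effort. Once one has the $\bD$-module structure and finite presentation (coherence of $\bD$ is needed here, since $\bD$ is not noetherian), the periodicity of dimensions is a soft consequence (Proposition~\ref{prop:stability-period}), not a rank-chasing argument in a spectral sequence.
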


This is a fine theorem, but is somewhat deficient in that it is numerical rather than structural; that is, instead of simply having an equality of dimensions, one would like to have some sort of additional algebraic structure  inducing isomorphisms between appropriate cohomology groups. The main purpose of this paper is to establish this structure.

Let $M$ be a finitely generated $\FI$-module. Define
\begin{displaymath}
\Gamma^t(M) = \bigoplus_{n \ge 0} \rH^t(S_n, M_n),
\end{displaymath}
regarded as a graded $\bk$-module in the evident manner. We show that $\Gamma^t(M)$ canonically has the structure of a module over the divided power algebra $\bD$ over $\bk$ in a single variable (see \S \ref{s:dp} for the definition). The ring $\bD$ is not noetherian in general, but it is coherent whenever $\bk$ is noetherian \cite[Theorem~4.1]{dp}, and so finitely presented $\bD$-modules are reasonably well-behaved. Our main result is the following theorem.

\begin{theorem}[Main theorem] \label{mainthm}
Assume that $\bk$ is a commutative noetherian ring. Let $M$ be a finitely generated $\FI$-module over $\bk$ and let $t \ge 0$. Then there exists a finitely presented graded $\bD$-module $K$, depending functorially on $M$, and a map of $\bD$-modules $\Gamma^t(M) \to K$ that is an isomorphism in all sufficiently large degrees.
\end{theorem}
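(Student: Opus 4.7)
The plan is to equip $\Gamma^t(M)$ with a canonical $\bD$-action built from cohomological transfer, and then to reduce, via the structure theory of finitely generated $\FI$-modules, to the case of induced modules, where the result follows from Shapiro's lemma combined with Nakaoka's theorem.

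\textbf{Constructing the $\bD$-action.} Given the inclusion $[n] \hookrightarrow [n+k]$, the $\FI$-structure supplies an $S_n \times S_k$-equivariant map $M_n \to M_{n+k}$. Composing the external cohomology product with the transfer along $S_n \times S_k \hookrightarrow S_{n+k}$ produces a natural bilinear pairing
\[
\rH^t(S_n, M_n) \otimes_{\bk} \rH^0(S_k, \bk) \longrightarrow \rH^t(S_{n+k}, M_{n+k}).
\]
Specializing to $M = \bk$ and $t = 0$, the induced multiplication on $\bigoplus_k \rH^0(S_k, \bk) = \bigoplus_k \bk \cdot e_k$ reads $e_n \cdot e_k = \binom{n+k}{n}\, e_{n+k}$, since the transfer on $\rH^0$ is multiplication by the index; this is precisely the divided power algebra $\bD$. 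The general pairing thereby endows $\Gamma^t(M)$ with a $\bD$-module structure, manifestly functorial in $M$.

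\textbf{Reduction to induced modules.} Using the structure theory developed in the paper cited for Theorem~\ref{nagpalthm}, every finitely generated $\FI$-module over a noetherian ring is modeled, up to a torsion $\FI$-module (one supported in finitely many degrees), by a finite complex whose terms are \emph{induced} modules $\mathrm{Ind}(W)\colon [n] \mapsto \mathrm{Ind}_{S_d \times S_{n-d}}^{S_n}(W \boxtimes \bk)$ with $W$ a representation of some $S_d$. Since for torsion $T$ the module $\Gamma^t(T)$ is finitely generated over $\bk$ and concentrated in a bounded range of degrees (hence automatically finitely presented over $\bD$ and zero in high degree), the hypercohomology spectral sequence for $\Gamma^t$ of this complex reduces the theorem to the case $M = \mathrm{Ind}(W)$.

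\textbf{The induced case.} For $M = \mathrm{Ind}(W)$, Shapiro's lemma identifies $\rH^t(S_n, M_n) \cong \rH^t(S_d \times S_{n-d}, W \boxtimes \bk)$, and the Künneth spectral sequence expresses this via $\mathrm{Tor}^{\bk}_{\bullet}\bigl(\rH^i(S_d, W), \rH^j(S_{n-d}, \bk)\bigr)$. Consequently $\Gamma^t(\mathrm{Ind}(W))$ is assembled from finitely many shifted copies of the modules $\Gamma^j(\bk) = \bigoplus_m \rH^j(S_m, \bk)$, and it suffices to show each $\Gamma^j(\bk)$ is finitely presented over $\bD$; by coherence of $\bD$ this reduces to finite generation. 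But Nakaoka's classical theorem says $\rH^j(S_m, \bk)$ stabilizes as $m \to \infty$, and the stabilization map is realized by the action of $e_1 \in \bD$, so $\Gamma^j(\bk)$ is generated in bounded degree.

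\textbf{Principal obstacle.} The hardest part, I expect, is not any individual step but the bookkeeping required to verify that the $\bD$-action is preserved under every reduction—by the comparison to the induced complex, by the Shapiro isomorphism, and by the Künneth edge maps—especially over a general noetherian base where higher $\mathrm{Tor}$ terms intervene. This calls for a uniformly functorial, base-change-compatible setup of the transfer pairings (most cleanly phrased as a single bifunctor on pairs of $\FI$-modules), together with careful control of the bounded low-degree discrepancy that accumulates at each step, so that the ``nearly finitely presented'' conclusion survives assembly into the final target $K$.
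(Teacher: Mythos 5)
Your construction of the $\bD$-action via transfer and your reduction to induced modules via the semi-induced resolution with torsion cohomology both track the paper's argument well. The gap is in the induced case, and it is a real one: you claim that \emph{``the stabilization map is realized by the action of $e_1 \in \bD$''}, but these are two different maps. The $\bD$-action is given by corestriction (transfer), which raises degree: on $\Gamma^0(\bk)_n \cong \bk$ it is multiplication by $n+1$, and in general it is not injective or surjective. Nakaoka's stabilization, by contrast, is realized by \emph{restriction}, which lowers degree and has nothing a priori to do with $\bD$-multiplication. Consequently, knowing that $\dim_{\bk}\rH^j(S_m,\bk)$ stabilizes does not by itself tell you anything about finite generation of $\Gamma^j(\bk)$ as a $\bD$-module — a graded module with eventually constant dimensions and trivial $\bD$-action is not finitely generated. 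Your step ``by coherence of $\bD$ this reduces to finite generation'' also does not hold in general: coherence gives you that a finitely generated submodule of a finitely presented module is finitely presented, not that finite generation implies finite presentation.

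The paper's solution to exactly this problem is the theory of connections: the restriction map $\rH^t(S_n, \cdot) \to \rH^t(S_{n-1}, \cdot)$ defines a $\bk$-linear operator $\nabla$ on $\Gamma^t(\cI(V))$ satisfying the Leibniz rule with respect to the derivation $d(x^{[k]}) = x^{[k-1]}$ of $\bD$ (this is the reformulation of Dold's lemma). The crucial structural input is Proposition~\ref{prop:connection}: a $\bD$-module admitting a connection is necessarily of the form $\bD \otimes_{\bk} \overline{M}$, i.e.\ relatively free. Then Nakaoka's theorem is used — via the vanishing $\ker(\nabla_n) = 0$ for $n > 2t + d$ — to show that $\overline{M}$ is a finitely generated $\bk$-module concentrated in bounded degree, whence $\Gamma^t(\cI(V))$ is a finitely presented (indeed relatively free) $\bD$-module. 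This detour through connections is precisely what converts the Nakaoka-type information about restriction maps into structural information about the $\bD$-module, and your proposal has no substitute for it. The Künneth spectral sequence you invoke is used in the paper too (for the bound $2t+d$), but only in conjunction with the connection argument; it does not by itself deliver the relative freeness you need.
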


It is easy to see that a finitely presented $\bD$-module over a field of positive characteristic has eventually periodic dimensions (see \S \ref{sec:dp-positive}), and so the above theorem recovers Theorem~\ref{nagpalthm}. However, it is stronger than Theorem~\ref{nagpalthm} in two ways. For one, the $\bD$-module structure provides more flexibility. For example, suppose that $M \to N$ is a map of finitely generated $\FI$-modules over a field of positive characteristic. It follows from Theorem~\ref{mainthm} that the image of the map $\Gamma^t(M) \to \Gamma^t(N)$ has eventually periodic dimension; Theorem~\ref{nagpalthm} does not give this information. Theorem~\ref{mainthm} also improves on Theorem~\ref{nagpalthm} in that it does not require $\bk$ to be a field. Thus, for example, Theorem~\ref{mainthm} yields non-trivial information about how the torsion in $\rH^t(S_n, M_n)$ varies with $n$ in the case $\bk=\bZ$.


\begin{remark}
If $M$ is a finitely generated $\FI$-module then $\Gamma^t(M)$ need not be a finitely presented (or even finitely generated) $\bD$-module, even for $t=0$; see Example~\ref{ex:nfp}. Thus Theorem~\ref{mainthm} is in some sense optimal.
\end{remark}

\begin{remark}
Theorem~\ref{nagpalthm} also follows from a recent result of Harman (\cite[Proposition~3.3]{nate}). However, the arguments there are specific to fields.
\end{remark}

\begin{remark}
There are similar, but easier, results for the homology of $\FI$-modules: for a finitely generated $\FI$-module $M$, one can give $\bigoplus_n \rH_t(S_n, M_n)$ the structure of a finitely generated $\bk[t]$-module. It follows that $\dim_{\bk} \rH_t(S_n, M_n)$ stabilizes as $n \to \infty$.
\end{remark}

\subsection{Quantitative results}

Let $M$ be a finitely generated $\FI$-module over a field $\bk$ of characteristic $p$. According to Theorem~\ref{nagpalthm}, $\Gamma^t(M)$ has eventually periodic dimensions. This raises two questions: what is the eventual period, and when does periodicity set in? We prove the following theorem that addresses these questions:

\begin{theorem} \label{thm:quant}
Suppose that $M$ is generated in degrees $\le g$ with relations in degrees $\le r$ and has degree $\delta$. Let $q$ be the smallest power of $p$ such that $\delta<q$. Then
\begin{displaymath}
\dim_{\bk}{\rH^t(S_n, M_n)}=\dim_{\bk}{\rH^t(S_{n+q}, M_{n+q})}
\end{displaymath}
holds for all $n \ge \max(g+r, 2t+\delta)$.
\end{theorem}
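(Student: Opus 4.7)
The plan is to combine the structural result of Theorem~\ref{mainthm}, refined quantitatively, with a sharp periodicity theorem for finitely presented graded $\bD$-modules proved via the theory of connections.

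First I would apply a quantitative strengthening of Theorem~\ref{mainthm}, producing a finitely presented graded $\bD$-module $K$ together with a comparison map $\varphi \colon \Gamma^t(M) \to K$, and tracking exactly when $\varphi$ is an isomorphism. The natural construction is to resolve $M$ by semi-induced $\FI$-modules (the ``nice'' projective-like objects for which cohomology is accessible): the resolution is controlled, with generators in degrees $\le g$ and first syzygies in degrees $\le g+r$. For a semi-induced piece $M(i)$, Shapiro's lemma gives $\rH^t(S_n, M(i)_n) = \rH^t(S_{n-i}, \bk)$, which is Nakaoka-stable as soon as $n-i>2t$; since the shifts $i$ that contribute nontrivially to $K$ in high degree are at most $\delta$, the cohomology of every piece behaves well once $n \ge 2t+\delta$. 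Organizing these facts into the appropriate hypercohomology spectral sequence yields that $\varphi$ is an isomorphism whenever $n \ge \max(g+r,\, 2t+\delta)$: the first term comes from the algebraic length of the resolution, and the second from Nakaoka stabilization applied to each piece.

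Next I would establish the following periodicity statement for $\bD$-modules: if $N$ is a finitely presented graded $\bD$-module over a field of characteristic $p$ whose degree (as a $\bD$-module) is bounded by $\delta$, and $q$ is the least power of $p$ with $\delta<q$, then multiplication by $x^{(q)} \in \bD$ induces an isomorphism $N_n \xrightarrow{\sim} N_{n+q}$ for all sufficiently large $n$. The key point is that $\bk[x^{(q)}] \subseteq \bD$ is a polynomial subalgebra, while the divided powers $x^{(p^k)}$ with $p^k<q$ act locally nilpotently on any $\bD$-module of sufficiently small degree; a finitely presented such module therefore eventually becomes free over $\bk[x^{(q)}]$. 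The theory of connections developed earlier in the paper packages this: a connection provides a canonical translation identifying $N_n$ with $N_{n+q}$, and this identification is realized precisely by the action of $x^{(q)}$. Combining with the first step, for $n \ge \max(g+r,\, 2t+\delta)$ we identify $\Gamma^t(M)_n$ and $\Gamma^t(M)_{n+q}$ with $K_n$ and $K_{n+q}$ respectively, and the $\bD$-module periodicity gives $\dim_{\bk} K_n = \dim_{\bk} K_{n+q}$, proving the theorem.

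The main obstacle is the sharpness of the periodicity result for $\bD$-modules: one must show that $q$, and not merely some larger power of $p$, already serves as a period, and that the degree threshold above which $x^{(q)}$ acts invertibly lines up cleanly with the $\FI$-module bounds $g+r$ and $2t+\delta$. This demands that the quantitative connection analysis interact precisely with the notion of degree inherited from $\FI$-modules, so that the $\FI$-module degree $\delta$ of $M$ transfers transparently to a $\bD$-module degree $\le \delta$ of $K$. A subsidiary concern is verifying that the two thresholds produced in the first step do not interfere: one must ensure that when we pass from $n$ to $n+q$, we remain inside the range where $\varphi$ is known to be an isomorphism, which is automatic from $n \ge \max(g+r, 2t+\delta)$ but is what forces the $\max$ rather than a smaller bound.
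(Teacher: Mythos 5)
Your first step is roughly in line with the paper: the paper constructs $\underline{\Gamma}^t(M)$ by applying $\Gamma^t$ to the semi-induced resolution from Theorem~\ref{thm:semires}, uses Proposition~\ref{prop:induced-connection} (a generalization of Dold's theorem) for the induced pieces, and uses Theorem~\ref{thm:homology-bound} to bound the degrees in which the comparison map fails to be an isomorphism. You present this somewhat differently — conflating the threshold for $\varphi$ being an isomorphism with the stabilization threshold $2t+\delta$, which in the paper plays the separate role of bounding $\lambda(\underline{\Gamma}^t(M))$ — but the resulting bound is the same.

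The real gap is in your second step, and it is the heart of the theorem. Your proposed $\bD$-module statement is false as written: a finitely presented graded $\bD$-module generated in low degrees need \emph{not} become free over $\bk[x^{[q]}]$, and multiplication by $x^{[q]}$ need not be eventually invertible. For instance, $N=\bD/(x^{[q]})$ is finitely presented and generated in degree $0$, but $x^{[q]}$ annihilates it; $N$ is infinite-dimensional and free over $\bD^{(p^{r+1})}$, not over $\bD^{(q)}$. More generally, the paper's Proposition~\ref{d:epsilon} only yields $\epsilon(\ast)\le \ell(\lambda(\ast))+1$ from a presentation, which in our situation would bound the period by a power of $p$ exceeding $2t+\delta$, not $\delta$; the sharp period $q$ with $\delta<q$ cannot be extracted from finite presentation plus degree bounds on generators alone. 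Your ``local nilpotency of $x^{[p^k]}$ for $p^k<q$'' reasoning is not a valid mechanism — those elements are not nilpotent on a general finitely presented $\bD$-module, and nothing in the degree of $\underline{\Gamma}^t(M)$ (which is $\le 2t+\delta$, not $\le\delta$) forces them to be.

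What actually makes the period $q$ work is the construction of a $q$-\emph{connection} on $\underline{\Gamma}^t(M)$, which is the content of Theorem~\ref{thm:main-positive} and its supporting Proposition~\ref{prop:compat-conn}. The point is subtle: each $\Gamma^t(\mathcal{I}(V))$ carries a genuine connection $\nabla$, but a map $\mathcal{I}(V)\to\mathcal{I}(W)$ does not respect $\nabla$. What the paper shows — using the graded-commutative ring structure on $\bE=\rR^\bullet\Gamma(\bA)$, the derivation $\bd$ on it, and the observation that $\bd^q(e)=0$ for any $e\in\bE$ of degree $<q$ — is that such a map \emph{does} respect the iterate $\nabla^q$ provided $V$ is supported in $\FI$-degrees $<q$. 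Passing to the complex of induced modules (Theorem~\ref{thm:triangle-induced}, all pieces with $\FI$-degree $\le\delta<q$) then gives a $q$-connection on $\underline{\Gamma}^t(M)$, whence $\epsilon(\underline{\Gamma}^t(M))\le\log_p q$ by Proposition~\ref{prop:q-connection}. This step, which ties the $\FI$-module degree $\delta$ of $M$ (rather than the internal degree of the $\bD$-module) to the period, is missing from your proposal and cannot be replaced by a purely $\bD$-module-theoretic argument.
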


We recall that the degree $\delta$ of $M$ is the degree of the eventual polynomial $n\mapsto \dim_{\bk}(M_n)$, and is bounded above by $g$. The above theorem greatly improves the bounds from \cite{nagpal}, and we believe it is close to optimal.

\subsection{Application: configuration spaces}

Let $\uconf_n(\cM)$ be the unordered configuration space of $n$ points on a manifold $\cM$. In \S \ref{sec:config}, we prove, under some mild assumptions on $\cM$, that there is a finitely presented $\bD$-module $K$ and $\bk$-module isomorphisms
\begin{displaymath}
\rH^t(\uconf_n(\cM), \bk)_n \cong K_n
\end{displaymath}
for $n \gg 0$. When $\bk$ is a field of positive characteristic, this recovers (without bounds) recent periodicity results on these cohomology groups (see \cite{jeremy} for the sharpest result and summary). When $\bk=\bZ$, this provides new information about how the torsion in the cohomology of $\uconf_n(\cM)$ varies with $n$. See Example~\ref{example:sphere} for the case of $\cM=\cS^2$, where the torsion has long been well-understood.

\subsection{Application: cohomology of Specht modules}

Let $\bk$ be a field of characteristic $p$. For a partition $\mu$, let $\bM_{\mu}$ be the associated Specht module over $\bk$. Write $\mu[n]$ for the partition $(n - |\mu|, \mu)$ (defined only for $n \ge \mu_1 + |\mu|$). In \S \ref{sec:specht}, we prove the following:

\begin{theorem}
Let $\mu$ be a partition and let $q$ be the smallest power of $p$ greater than $\vert \mu \vert$. Then
\begin{displaymath}
\dim_{\bk} \rH^t(S_n, \bM_{\mu[n]}) = \dim_{\bk} \rH^t(S_n, \bM_{\mu[n+q]})
\end{displaymath}
holds for $n \ge \max(2t + d, 2 d + \mu_1)$.
\end{theorem}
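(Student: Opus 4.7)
The plan is to apply Theorem~\ref{thm:quant} to the $\FI$-module $M$ with $M_n = \bM_{\mu[n]}$, with transition maps induced by the standard inclusions $S_n \hookrightarrow S_{n+1}$. First, I would verify that $M$ is a finitely generated $\FI$-module over $\bk$: the inclusions $S_n \hookrightarrow S_{n+1}$ give natural embeddings $\bM_{\mu[n]} \hookrightarrow \bM_{\mu[n+1]}$ of Specht modules, assembling $M$ into an $\FI$-module. Each $\bM_{\mu[n]}$ sits inside the Young permutation module $\bM^{\mu[n]}$, and the $\FI$-module $n \mapsto \bM^{\mu[n]}$ is induced from the $S_{|\mu|}$-representation $\mathrm{Ind}_{S_\mu}^{S_{|\mu|}} \bk$, hence is finitely generated. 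Noetherianity of $\FI$-modules over $\bk$ then forces $M$ to be finitely generated.

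Next, I would establish the two relevant bounds on the invariants of $M$: first, that the degree satisfies $\delta(M) = d$, a consequence of the hook length formula, which presents $\dim_\bk \bM_{\mu[n]}$ as a polynomial in $n$ of exact degree $|\mu|$; second, that the generation and relation degrees satisfy $g(M) + r(M) \le 2d + \mu_1$. The latter is the technical heart of the argument and would be approached by analyzing an explicit presentation of $M$ via polytabloids (as generators) and Garnir relations, both organized in terms of partitions built from $\mu$ with a varying first row.

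With these bounds in hand, Theorem~\ref{thm:quant} applies immediately: since $q$ is the smallest power of $p$ greater than $\delta = d$, the theorem yields
\begin{displaymath}
\dim_\bk \rH^t(S_n, M_n) = \dim_\bk \rH^t(S_{n+q}, M_{n+q})
\end{displaymath}
for all $n \ge \max(g + r, 2t + \delta) = \max(2d + \mu_1, 2t + d)$, which is precisely the desired periodicity bound.

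The main obstacle is the combinatorial bound $g(M) + r(M) \le 2d + \mu_1$ in positive characteristic. In characteristic zero the Specht $\FI$-module is projective (isomorphic to a Church--Ellenberg--Farb $\FI$-module $M(\mu)$), so its invariants are classical; but in characteristic $p$ the module $M$ acquires extra relations coming from the modular representation theory of the symmetric groups, and these must be bounded carefully. I expect to control them either by an $\FI$-equivariant analysis of Garnir relations, or by realizing $M$ as the kernel of an explicit map between two induced permutation $\FI$-modules and reading off the presentation bounds from the well-understood invariants of the latter.
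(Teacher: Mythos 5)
Your strategy of assembling the Specht modules $\bM_{\mu[n]}$ into an $\FI$-module $M$ and invoking Theorem~\ref{thm:quant} is the right general framework, and your identification of $\delta(M)=d$ is correct; but there is a genuine gap at the step you yourself flag as the technical heart, namely the claim $g(M)+r(M)\le 2d+\mu_1$. The paper's analysis (Proposition~\ref{prop:L-lambda}) shows $g(M)=d+\mu_1$ exactly, which would force $r(M)\le d< g(M)$ for your inequality to hold; the paper does not establish this, and in fact its bound on the relation degree is only $r(M)\le 2d+\mu_1+1$, obtained via Castelnuovo--Mumford regularity of the cokernel $K$ in $0\to L_\mu\to\cI(V_\mu)\to K\to 0$. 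Plugging the paper's bounds into Theorem~\ref{thm:quant} as a black box would give the onset of periodicity at $n\ge g+r\le 3d+2\mu_1+1$, which is weaker than the stated $2d+\mu_1$.

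The paper achieves the sharper bound by \emph{not} routing through Theorem~\ref{thm:quant}. Instead, it uses the specific structure of $L_\mu$ as a submodule of the induced module $\cI(V_\mu)$: since $K$ has $g(K)\le d$ and $r(K)\le g(L_\mu)=d+\mu_1$, Theorem~\ref{thm:homology-bound} applied to $K$ yields a semi-induced complex for $L_\mu$ (Proposition~\ref{prop:L-lambda}(e)) that is exact already in degrees $\ge 2d+\mu_1$ --- a better cut-off than one gets from $g(L_\mu)+\max(g(L_\mu),r(L_\mu))$ directly. The proof then combines this with the $q$-connection on $\Gamma^t(I^\bullet)$ (Proposition~\ref{prop:compat-conn}) and the $\lambda$-bound $\lambda(\Gamma^t(I^\bullet))\le 2t+d$ (Theorem~\ref{thm:main}(c)) via Proposition~\ref{prop:stability-period}. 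So the missing idea in your proposal is precisely this indirection through the induced envelope $\cI(V_\mu)$ and its cokernel, which sidesteps the need for a sharp $r(M)$ bound that you would have to prove from scratch (and which may not even hold in the form you need).

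Two smaller points. First, the assertion that the inclusions $S_n\hookrightarrow S_{n+1}$ give natural embeddings $\bM_{\mu[n]}\hookrightarrow\bM_{\mu[n+1]}$ assembling into an $\FI$-module is exactly the content of Proposition~\ref{prop:L-lambda}(a) and requires an explicit polytabloid argument (consistent choice of tableaux $T_n$ and column stabilizers) rather than being automatic from branching. Second, in characteristic zero the $\FI$-module with values $\bM_{\mu[n]}$ is the irreducible $V(\mu)$ of Church--Ellenberg--Farb, not $M(\mu)$; $M(\mu)$ in their notation denotes the induced module $\cI(\bM_\mu)$. This does not affect your argument but the reference is misattributed.
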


Note that a similar result can be obtained from  \cite[Corollary~2.6]{nate} (but without bounds on the onset of periodicity) and that this is a specific case of a more general open problem \cite[Problem~8.3.1]{hemmer}.  

\subsection{Overview of the proofs}

The main innovation of this paper is the systematic use of the $\bD$-module structure on $\Gamma^t(M)$. This not only leads to stronger results, but proofs that are much cleaner than those in \cite{nagpal}.

We deduce Theorem~\ref{mainthm} from a generalization of a theorem of Dold and a structurual result on $\FI$-modules due to the first author. The argument is roughly as follows. Suppose that $V$ is a $\bk[S_d]$-module, and let $\cI(V)$ be the associated induced $\FI$-module (see \S \ref{ss:induced}). We prove that $\Gamma^t(\cI(V))$ is a finitely generated and relatively free $\bD$-module (that is, it has the form $M_0 \otimes_{\bk} \bD$ for some finitely generated $\bk$-module $M_0$.); this is our generalization of Dold's theorem. Now suppose that $M$ is an arbitary finitely generated $\FI$-module. The first author has shown that there is a sequence of $\FI$-modules
\begin{displaymath}
0 \to M \to I^0 \to \cdots \to I^n \to 0
\end{displaymath}
where each $I^k$ is semi-induced (i.e., has a filtration with induced quotients) and whose homology is torsion. This gives a spectral sequence that computes $\Gamma^t(M)$ from the $\Gamma^i(I^j)$'s, ignoring finitely many degrees (due to the fact that the homology is possibly non-zero). Our generalization of Dold's theorem shows that each $\Gamma^i(I^j)$ is finitely generated and free. Thus, using the fact that $\bD$ is coherent, it follows that $\Gamma^t(M)$ is isomorphic to a finitely presented $\bD$-module outside of finitely many degrees.

To prove our generalization of Dold's theorem, and also to prove the quantitative bounds in Theorem~\ref{thm:quant}, we appeal to the theory of connections. The ring $\bD$ admits a derivation $d$ defined by $d(x^{[n]})=x^{[n-1]}$. A connection on a $\bD$-module $M$ is a $\bk$-linear map $\nabla \colon M \to M$ satisfying a version of the Leibniz rule with respect to $d$. A fundamental result (Proposition~\ref{prop:connection}) asserts that if $M$ admits a connection then $M$ is relatively free. Suppose that $V$ is the trivial representation of $S_0$, so that $\cI(V)_n=\bk$ for all $n$. We show that the restriction map
\begin{displaymath}
\rH^t(S_n, \bk) \to \rH^t(S_{n-1}, \bk)
\end{displaymath}
induces a connection on the $\bD$-module $\Gamma^t(\cI(V))$. A similar construction applies to arbitrary representations $V$ of $S_d$. This is how we prove our generalization of Dold's theorem. Similar ideas appear in Dold's original paper, but the use of connections greatly clarifies the arguments.

Suppose now that $I \to J$ is a map of induced $\FI$-modules over a field $\bk$ of characteristic $p$. The induced map $\Gamma^t(I) \to \Gamma^t(J)$ need not respect the connections on these $\bD$-modules. However, we show that if $I$ and $J$ are generated in degrees $<q$, where $q$ is a power of $p$, then this map does respect the $q$-fold iterate of the connections. From this, we deduce that if $M$ is a finitely generated $\FI$-module of degree $<q$ then $\Gamma^t(M)$ admits a connection with respect to the $q$-fold iterate of $d$. This is the key step in the proof of Theorem~\ref{thm:quant}.

\subsection{Open problems}
\label{ss:future}

As mentioned, $\FI$-modules provide one way to produce ``coherent systems'' of $S_n$-representations, but there are others. We mention two specific ones.

For a positive integer $d$, let $\FI_d$ be the category whose objects are finite sets, and where morphisms are injections together with a $d$-coloring of the complement of the image. Let $M$ be a finitely generated $\FI_d$-module over a field $\bk$. Then $M_n$ is a representation of $S_n$. The first author has conjectured that $\dim_{\bk} \rH^t(S_n, M_n)$ is eventually a quasi-polynomial in $n$ of degree at most $d-1$; note that this exactly reduces to Theorem~\ref{nagpalthm} when $d=1$. One can give these groups the structure of a module over the divided power algebra in $d$ variables; ideally, this module structure would somehow be responsible for the quasi-polynomiality.

Let $\FS$ be the category whose objects are finite sets, and where morphisms are surjections. Let $\FS^{\op}$ be the opposite category, and let $M$ be a finitely generated $\FS^{\op}$-module over $\bk$. Once again, $M_n$ is a representation of $S_n$. Given $n \le m$ there is a natural map $M_m \to M_n$ of $S_n$ representations, and thus a restriction map
\begin{displaymath}
\rH^t(S_m, M_m) \to \rH^t(S_n, M_n).
\end{displaymath}
When $M$ is the trivial $\FS^{\op}$-module, these maps are isomorphisms for $n>2t$: this is exactly Nakaoka's theorem. Are these maps always isomorphisms for $n \gg 0$? One can also ask an analogous question for the homology groups  $\rH_t(S_n, M_n)$ in this setting. 

In \cite{rww}, Randal-Williams and Wahl have found a general condition which, given a monoidal category $(\cC, \oplus, 0)$ and a functor $F \colon \cC \to \Mod_{\bZ}$, ensures  that the homology groups \[\rH_t(\Aut_{\cC}(A \oplus X^{\oplus n}), F(A \oplus X^{\oplus n})) \] stabilize in $n$. Does their condition imply that the corresponding cohomology groups are eventually periodic? We thank the referee for suggesting this question.

\subsection{Outline of paper}

In \S \ref{sec:background} we review the theory of $\FI$-modules. The material here is mostly not new. In \S \ref{s:dp} we introduce the ring $\bD$, and prove a number of basic facts about $\bD$-modules. The main results of this paper are proved in \S \ref{sec:proof-main}. Finally, \S \ref{sec:specht} and \S \ref{sec:config} contain applications of our results to the cohomology of Specht modules and configuration spaces respectively.

%


\subsection*{Acknowledgement} We would like to thank Jeremy Miller for pointing out that the orientability assumption is not needed in Theorem~\ref{thm:configuration-spaces}. We thank the anonymous referee for a careful reading and for pointing out a mistake in an earlier version of Proposition~\ref{prop:tensor-functor}.

\section{Background on $\FI$-modules}
\label{sec:background}

\subsection{$\FI$-modules} \label{ss:fi}

Let $\FI$ be the category whose objects are finite sets and whose morphisms are injections. An {\bf $\FI$-module} with coefficients in $\bk$, a commutative noetherian ring, is a functor from $\FI$ to the category $\Mod_{\bk}$ of $\bk$-modules. We write $\Mod_{\FI}$ for the category of $\FI$-modules. Suppose that $M$ is an $\FI$-module. We write $M_n$ for the value of $M$ on the finite set $[n]=\{1,\ldots,n\}$. This is a representation of $S_n$ over the ring $\bk$. The inclusion $[n] \to [n+1]$ induces an $S_n$-equivariant map $M_n \to M_{n+1}$. In fact, one can think of an $\FI$-module as a sequence $\{M_n\}_{n \ge 0}$ of representations equipped with transition maps $M_n \to M_{n+1}$ satisfying the following condition: the image of $M_n$ in $M_{n+r}$ lands in $M_{n+r}^{S_r}$ for all $n$ and $r$, see \cite[Remark~3.3.1]{fimodules}. 

\subsection{The twisted commutative algebra $\bA$}

A {\bf representation of $S_{\ast}$} over $\bk$ is a sequence $(V_n)_{n \ge 0}$ where $V_n$ is a $\bk[S_n]$-module. We write $\Rep_{\bk}(S_{\ast})$ for the category of such sequences. If $V$ and $W$ are two representations of $S_{\ast}$, we define their tensor product $V \otimes W$ to be the representation of $S_{\ast}$ given by
\begin{displaymath}
(V \otimes W)_n = \bigoplus_{i+j=n} \Ind_{S_i \times S_j}^{S_n}(V_i \otimes_{\bk} W_j).
\end{displaymath}
One can show that this tensor product is naturally symmetric, that is, there is a canonical isomorphism $V \otimes W \to W \otimes V$ that squares to the identity.

A {\bf twisted commutative algebra} is a commutative algebra object in the tensor category $\Rep(S_{\ast})$, that is, an $S_{\ast}$-representation $A$ equipped with a multiplication map $A \otimes A \to A$ satisfying the appropriate axioms. (We note that, by Frobenius reciprocity, specifying such a multiplication map is the same as specifying $S_i \times S_j$ equivariant maps $A_i \otimes_{\bk} A_j \to A_{i+j}$ for all $i$ and $j$.) Given such an algebra $A$, an $A$-module is an $S_{\ast}$-representation $M$ equipped with a multiplication map $A \otimes M \to M$ satisfying the appropriate axioms.

Let $\bA$ be the $S_{\ast}$-representation given by $\bA_n=\bk$ with the trivial $S_n$ action for all $n$. There is a canonical map $\bA_i \otimes_{\bk} \bA_j \to \bA_{i+j}$ for every $i$ and $j$, and this defines a multiplication $\bA \otimes \bA \to \bA$ that gives $\bA$ the structure of a twisted commutative algebra. Suppose $M$ is an $\bA$-module. Then $M$ is an $S_{\ast}$-representation, and the multiplication map $\bA_1 \otimes_{\bk} M_n \to M_{n+1}$ gives an $S_n$-equivariant map $M_n \to M_{n+1}$. These maps give $\{M_n\}_{n \ge 0}$ the structure of an $\FI$-module. In this way, the categories $\Mod_{\bA}$ and $\Mod_{\FI}$ are equivalent, see \cite[Proposition~7.2.5]{catgb}. We freely pass between the two perspectives.

\subsection{Generators and relations}

Since $\bA$ is a commutative algebra object in a symmetric tensor category, there is a notion of tensor product of $\bA$-modules and (since there are enough projectives) one can derive this. Regard $\bk$ as an $\bA$-module by letting the elements of positive degree act by~0. The $\bA$-modules $\Tor^{\bA}_i(M, \bk)$ are important quantities associated to the $\FI$-module $M$. We let $g(M)$ be the maximal degree occurring in this module for $i=0$, and $r(M)$ the maximal degree for $i=1$. One can interpret $g(M)$ and $r(M)$ as the degrees of generators and relations for $M$. In terms of $\FI$-modules, $\Tor^{\bA}_0(M, \bk)$ assigns to $[n]$ the quotient of $M_n$ by the $S_n$-subrepresentation generated by the image of the transition map $M_{n-1} \to M_n$. The derived functors of this construction, i.e., what we call $\Tor^{\bA}_{\bullet}(-, \bk)$, is sometimes called ``$\FI$-module homology'' in the literature (see the introduction of \cite{castelnuovo-regularity}, for example).

\subsection{Induced and semi-induced $\FI$-modules}
\label{ss:induced}

Let $V$ be a representation of $S_d$. Define $\cI(V)$ to be the $S_{\ast}$-representation given by
\begin{displaymath}
\cI(V)_n = \Ind_{S_d \times S_{n-d}}^{S_n}(V \boxtimes \mathrm{triv}),
\end{displaymath}
where here $\mathrm{triv}$ denotes the trivial representation of $S_{n-d}$. One easily verifies that $\cI(V)$ has the structure of an $\FI$-module; see \cite[Definition~2.2.2]{fimodules} for details (note that there the notation $M(V)$ is used in place of $\cI(V)$). We extend the construction of $\cI(V)$ additively to objects $V$ of $\Rep_{\bk}(S_{\ast})$. We call $\FI$-modules of the form $\cI(V)$ {\bf induced $\FI$-modules}. From the tca perspective, $\cI(V)$ is the $\bA$-module $\bA \otimes V$. From this point of view, the following proposition is immediate:

\begin{proposition}
\label{prop:tensor-hom-adjunction}
Let $V \in \Rep_{\bk}(S_{\ast})$. Then for any $\FI$-module $M$ we have
\begin{displaymath}
\Hom_{\Mod_{\FI}}(\cI(V), M) = \Hom_{S_{\ast}}(V, M).
\end{displaymath}
\end{proposition}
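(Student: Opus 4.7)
The plan is to exploit the equivalence $\Mod_{\FI} \cong \Mod_{\bA}$ and to identify $\cI(V)$ with the free $\bA$-module on $V$; once that is done, the assertion reduces to the standard tensor-hom (i.e., extension of scalars) adjunction associated to a commutative algebra object in a symmetric monoidal category.

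First I would verify the identification $\cI(V) \cong \bA \otimes V$ as $\bA$-modules. Unwinding the definition of the tensor product in $\Rep_{\bk}(S_{\ast})$,
\begin{displaymath}
(\bA \otimes V)_n \;=\; \bigoplus_{i+j=n} \Ind_{S_i \times S_j}^{S_n}(\bA_i \otimes_{\bk} V_j) \;=\; \bigoplus_{d \ge 0} \Ind_{S_{n-d} \times S_d}^{S_n}(\mathrm{triv} \boxtimes V_d).
\end{displaymath}
When $V$ is concentrated in a single degree $d$ this recovers the formula for $\cI(V)_n$ given in \S \ref{ss:induced}, and the general case follows by additivity. A straightforward check shows that the $\bA$-module structure on $\bA \otimes V$ coming from the algebra multiplication $\bA \otimes \bA \to \bA$ corresponds, under the equivalence $\Mod_{\bA} \simeq \Mod_{\FI}$, to the $\FI$-module structure on $\cI(V)$.

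With this identification in hand, I would construct the adjunction isomorphism directly. The unit is the degree-zero inclusion $\eta_V \colon V = \bA_0 \otimes V \hookrightarrow \bA \otimes V$. Given an $S_{\ast}$-equivariant $\psi \colon V \to M$, define $\widetilde{\psi} \colon \bA \otimes V \to M$ as the composite
\begin{displaymath}
\bA \otimes V \xrightarrow{\mathrm{id}_{\bA} \otimes \psi} \bA \otimes M \xrightarrow{\mu_M} M,
\end{displaymath}
where $\mu_M$ is the $\bA$-action on $M$; this is $\bA$-linear by associativity of $\mu_M$. Conversely, from $\phi \colon \bA \otimes V \to M$ in $\Mod_{\bA}$, set $\phi^{\flat} = \phi \circ \eta_V$. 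That $\psi \mapsto \widetilde{\psi}$ and $\phi \mapsto \phi^{\flat}$ are mutually inverse follows from the unit axiom for $\mu_M$ in one direction and $\bA$-linearity of $\phi$ combined with the unit axiom in the other — both are routine diagram chases.

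There is no real obstacle here: once the identification $\cI(V) = \bA \otimes V$ is pinned down, the adjunction is the standard free–forgetful adjunction in a symmetric monoidal category, and the mild bookkeeping is simply matching the explicit induction formula for $\cI(V)$ with the formal tensor product in $\Rep_{\bk}(S_{\ast})$.
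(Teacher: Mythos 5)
Your proposal is correct and takes essentially the same approach as the paper: the paper also identifies $\cI(V)$ with the free $\bA$-module $\bA \otimes V$ and then invokes the standard free--forgetful (extension of scalars) adjunction, which it calls ``immediate.'' You simply spell out the details that the paper leaves to the reader.
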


We say that an $\FI$-module $M$ is {\bf semi-induced} if it has a finite length filtration where the graded pieces are induced. We have the following useful result:

\begin{proposition}[{\cite[Proposition~A.8, Theorem~A.9]{djament}}]
\label{prop:short-exact-sequence-semi-induced}
In a short exact sequence of $\FI$-modules if any two of the objects are semi-induced then so is the third. 
\end{proposition}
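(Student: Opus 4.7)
The plan is to work with $\FI$-module homology $\rH_i^{\FI}(M) := \Tor^{\bA}_i(M, \bk)$. The first step is to verify that every induced module $\cI(V) = \bA \otimes V$ is acyclic in the sense that $\rH_0^{\FI}(\cI(V)) = V$ and $\rH_i^{\FI}(\cI(V)) = 0$ for $i \ge 1$; this follows because the functor $V \mapsto \bA \otimes V$ carries a projective resolution of $\bk$ over $\bA$ to one of $\cI(V)$ over $\bA$. Combining this with the long exact sequence of $\rH_*^{\FI}$ applied to the defining filtration of a semi-induced module, a routine induction on filtration length shows that every semi-induced $\FI$-module has vanishing $\FI$-module homology in positive degrees.

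The extension case (both $A$ and $C$ semi-induced, conclude $B$ semi-induced) is the easy one: given filtrations $0 = A_0 \subset \cdots \subset A_m = A$ and $0 = C_0 \subset \cdots \subset C_n = C$ with induced quotients, set $B_i = A_i$ for $i \le m$ and let $B_{m+j}$ be the preimage in $B$ of $C_j$ for $1 \le j \le n$. The associated graded of this filtration of $B$ is precisely the union of the associated gradeds of $A$ and $C$, so every piece is induced.

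For the remaining two cases, the plan is to induct on the total filtration length of the two modules that are assumed to be semi-induced. In the quotient case ($A$ and $B$ semi-induced, show $C = B/A$ semi-induced), peel off a step $B^{(1)} = \cI(W)$ at the bottom of a semi-induced filtration of $B$ and analyze how $A$ intersects $B^{(1)}$; use the adjunction $\Hom_{\Mod_{\FI}}(\cI(W), -) = \Hom_{S_\ast}(W, -)$ from Proposition~\ref{prop:tensor-hom-adjunction} to describe inclusions of induced modules, and use the acyclicity established in Step~1 (together with the long exact sequence of $\rH_*^{\FI}$ applied to $0 \to A \to B \to C \to 0$) to obtain the required filtration on $C$. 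A symmetric argument handles the submodule case ($B$ and $C$ semi-induced, show $A$ semi-induced), with the long exact sequence providing the vanishing of $\rH_i^{\FI}(A)$ for $i\ge 1$ that one needs to run the induction.

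The main obstacle is that the submodule $A$ does not in general respect the chosen filtration of $B$, so the intersections $A \cap B^{(i)}$ need not themselves be pieces of a semi-induced filtration of $A$. One must therefore refine or reorganize filtrations carefully so that the pieces of all three modules become simultaneously compatible, and it is precisely at this step that the hypothesis of semi-inducedness on two of the three terms — via the acyclicity from Step~1 — allows one to rule out obstructions to lifting the quotient filtration back to $B$ or pushing the subfiltration down to $C$.
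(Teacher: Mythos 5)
The paper does not prove this proposition: it cites \cite[Proposition~A.8, Theorem~A.9]{djament} and moves on, so there is no in-paper proof to compare against. Evaluating your argument on its own terms, the first two pieces are fine: semi-induced modules do have vanishing higher $\Tor^{\bA}_\bullet(-,\bk)$ (your Step~1, which is \cite[Theorem~B]{ramos}), and the extension case (two outer terms semi-induced $\Rightarrow$ middle term semi-induced) really is just concatenation of filtrations, exactly as you say.

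The quotient and submodule cases, however, have a genuine gap, and you in fact point directly at it yourself (``the submodule $A$ does not in general respect the chosen filtration of $B$'') without resolving it. The plan to ``peel off $B^{(1)}=\cI(W)$ and analyze $A\cap B^{(1)}$'' does not close up inductively: $A\cap B^{(1)}=\ker(B^{(1)}\to C)$, but the map $B^{(1)}\to C$ is not surjective in general, so you cannot apply the submodule case you are trying to prove, nor is $A\cap B^{(1)}$ obviously semi-induced just because it sits inside two semi-induced modules (submodules of induced modules need not be semi-induced --- torsion submodules give counterexamples). The acyclicity of Step~1 controls $\Tor_{\ge 1}$ of the unknown third term via the long exact sequence, but that only helps if you already know the \emph{converse} of Step~1: that vanishing of $\Tor^{\bA}_{\ge 1}(M,\bk)$ (under suitable finiteness) forces $M$ to be semi-induced. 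That converse is the real content here; it is Djament's Theorem~A.9, which this paper records separately as Theorem~\ref{thm:perfect} (semi-induced $\Leftrightarrow$ derived saturated) and which is itself a substantive structural theorem, not something that falls out of ``reorganizing filtrations carefully.'' Moreover, even granting the homological characterization, the quotient case ($A,B$ semi-induced $\Rightarrow C$ semi-induced) is not immediate: the long exact sequence leaves $\Tor_1(C)\cong\ker\bigl(\Tor_0(A)\to\Tor_0(B)\bigr)$, and $\Tor_0$ is only right-exact, so the injectivity of this map is an additional point you would need to argue. In short, Step~1 and the extension case are solid, but the two hard cases need the homological/derived-saturation characterization as an input, and your sketch neither proves nor invokes it.
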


\begin{remark}
In characteristic~0, induced $\FI$-modules are projective, and so every semi-induced $\FI$-module is induced. This is not true in general.
\end{remark}

\subsection{Finiteness properties}

An $\FI$-module is {\bf finitely generated} if it is a quotient of $\cI(V)$ for some finitely generated object $V$ of $\Rep_{\bk}(S_{\ast})$ (meaning each $V_n$ is finitely generated as a $\bk$-module and all but finitely many vanish). This definition  coincides with the general categorical notion of a finitely generated object. A fundamental result on $\FI$-modules is the following noetherianity theorem, first proved in \cite[Theorem~A]{fi-noeth} and later reproved in \cite[Corollary~7.2.7]{catgb}:

\begin{theorem}
If $M$ is a finitely generated $\FI$-module then $M$ is noetherian, that is, any $\FI$-submodule of $M$ is also finitely generated.
\end{theorem}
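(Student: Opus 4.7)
The plan is to reduce the statement to a concrete ``free'' case and then to handle that case by a Gröbner-style combinatorial argument. Since every finitely generated $\FI$-module is a quotient of some $\cI(V)$ with $V \in \Rep_{\bk}(S_{\ast})$ finitely generated, and since submodules and quotients of noetherian modules are noetherian, it suffices to prove noetherianity for $\cI(V)$. Using $\bk$-noetherianity to choose a surjection $\bk[S_d]^{\oplus k} \twoheadrightarrow V_d$ for each of the finitely many nonzero components and applying $\cI$, the problem reduces to showing that the principal projective $P_d := \bk[\FI([d],-)]$ is noetherian for every $d \ge 0$. (Concretely, $P_d$ assigns to $[n]$ the free $\bk$-module on the set of injections $[d] \hookrightarrow [n]$.)

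Next, I would introduce a Gröbner-style monomial order on $P_d$. Identify each basis element $f \in \FI([d],[n])$ (a ``monomial'' of degree $n$) with the tuple $(f(1),\ldots,f(d)) \in [n]^d$, and totally order the disjoint union $\coprod_n \FI([d],[n])$ first by $n$ and then lexicographically on these tuples. This order is a well-order that is compatible with the $\FI$-action, in the sense that postcomposition by an injection $[n] \hookrightarrow [n']$ preserves the order between any two monomials that it separates. Given an $\FI$-submodule $N \subseteq P_d^{\oplus \ell}$, define its initial set $\mathrm{in}(N) \subseteq \coprod_n [\ell] \times \FI([d],[n])$ by taking, in each nonzero element of $N$, the maximal monomial appearing in its support and recording its index. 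Compatibility of the order with the action implies that $\mathrm{in}(N)$ is closed under the natural $\FI$-action on monomials.

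The combinatorial heart of the proof is then to show that any such $\FI$-stable subset $\mathrm{in}(N)$ has only finitely many minimal elements with respect to the divisibility preorder induced by the $\FI$-action. This is exactly where Higman's lemma enters: after encoding an injection $f : [d] \hookrightarrow [n]$ as a length-$d$ word recording the gaps $f(1),\, f(2)-f(1),\, \ldots,\, f(d)-f(d-1)$, the divisibility preorder on $\coprod_n \FI([d],[n])$ becomes a subword embedding on words over $\bN$, which Higman's lemma asserts to be a well-quasi-order. Consequently $\mathrm{in}(N)$ is generated, in the $\FI$-stable sense, by finitely many monomials $m_1,\ldots,m_r$; lifting these to elements $x_1,\ldots,x_r \in N$ with those leading monomials, a standard division-with-remainder argument (using the well-ordering to terminate) shows that $x_1,\ldots,x_r$ generate $N$. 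The main obstacle is the Higman-style well-quasi-ordering input in this step; everything preceding and following it is either categorical bookkeeping or routine Gröbner manipulation.
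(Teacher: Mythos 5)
The paper itself does not prove this theorem; it is quoted from \cite{fi-noeth} and \cite{catgb}, so there is no internal argument to compare against. Your outline is in the spirit of the Gr\"obner-basis proof in \cite{catgb}: the reduction to the principal projective $P_d$ and the idea of controlling leading-monomial sets by a well-quasi-order are the right skeleton. However, there is a genuine gap at the step where you introduce the monomial order. The lexicographic order on $\coprod_n \FI([d],[n])$ (identified with tuples) is \emph{not} compatible with the $\FI$-action, and in fact no compatible total order can exist, because the automorphism groups $S_n$ are nontrivial: if $f < g$ lie in the same $S_n$-orbit and $\sigma f = g$, then the rule ``$m' < m \Rightarrow hm' < hm$'' would force $g = \sigma f < \sigma g < \sigma^2 g < \cdots$, contradicting finiteness of the order of $\sigma$. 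Concretely, for $d=2$, $n=3$: $(1,3) <_{\mathrm{lex}} (2,3)$, yet postcomposing with the transposition $(1\,2) \in \Aut_{\FI}([3])$ reverses the inequality. Consequently your set $\mathrm{in}(N)$ need not be closed under the $\FI$-action, which is exactly the property everything downstream rests on. (A related symptom that the setup is off: the divisibility preorder on $\FI$-monomials is trivial --- every $f\colon [d]\to[n]$ divides every $g\colon[d]\to[n']$ with $n'\ge n$, since the partial map $f(i)\mapsto g(i)$ always extends to an injection $[n]\hookrightarrow[n']$ --- so ``finitely many minimal monomials'' carries no information.)

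The standard repair, and what \cite{catgb} actually does, is to pass to the auxiliary category $\mathbf{OI}$ of finite totally ordered sets with order-preserving injections, whose automorphism groups are trivial. There the lexicographic order genuinely is preserved by postcomposition, and the divisibility preorder on $\mathbf{OI}([d],-)$ becomes componentwise $\le$ on the $(d{+}1)$-tuples of gaps, so Dickson's lemma (rather than Higman's) is what does the job. One then needs the additional ``quasi-Gr\"obner'' observation that the restriction of $P_d^{\FI}$ along $\mathbf{OI}\to\FI$ is a finitely generated $\mathbf{OI}$-module (it is generated in degree $d$ by the $d!$ bijections $[d]\to[d]$), from which $\mathbf{OI}$-noetherianity implies $\FI$-noetherianity. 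With these two corrections your outline becomes the proof in \cite{catgb}; the earlier proof in \cite{fi-noeth} reaches the same conclusion by a different route.
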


\subsection{The shift functor}

Let $V$ be an $S_{\ast}$ representation. We define a new $S_{\ast}$ representation $\Sigma(V)$ by $\Sigma(V)_n=V_{n+1} \vert_{S_n}$. We call this the {\bf shift} of $V$. If $M$ is an $\FI$-module then $\Sigma(M)$ is obtained by precomposing $M$ by the endofunctor $- \sqcup \{\star\}$ of $\FI$, and hence $\Sigma(M)$ is an $\FI$-module. Moreover, in this case, there is a canonical map of $\FI$-modules $M \to \Sigma(M)$. If $V$ is an $S_{\ast}$-representation then one has the important identity $\Sigma(\cI(V))=\cI(V) \oplus \cI(\Sigma(V))$. We make use of the projection map $\Sigma^n(\cI(V)) \to \cI(V)$ throughout without further mention of this identity.

\subsection{Torsion $\FI$-modules}

Let $M$ be an $\FI$-module. We say that an element $x \in M_n$ is {\bf torsion} if $x$ maps to~0 under some transition map $M_n \to M_m$. We say that $M$ itself is {\bf torsion} if all of its elements are.  We write $\Mod_{\FI}^{\tors}$ for the category of torsion $\FI$-modules. This is a Serre subcategory of $\Mod_{\FI}$ closed under direct sum, and so is a localizing subcategory.

\subsection{The degree of an $\FI$-module}

We inductively define classes $\cC_d$ of $\FI$-modules as follows. First, $\cC_{-1}$ is the class of torsion $\FI$-modules. Having defined $\cC_d$, we let $\cC_{d+1}$ be the class of $\FI$-modules $M$ such that the cokernel of the natural map $M \to \Sigma(M)$ belongs to $\cC_d$. We now define the {\bf degree} of an $\FI$-module $M$, denoted $\delta(M)$, to be the minimal $d$ such that $M \in \cC_d$, or $\infty$ if not such $d$ exists.  We have the following basic facts:
\begin{enumerate}
\item $\delta(M)=-1$ if and only if $M$ is torsion.
\item If $M$ is semi-induced then $\delta(M) = g(M)$.
\item If $N$ is a subquotient of $M$ then $\delta(N) \le \delta(M)$. In particular, $\delta(M) \le g(M)$.
\item If $M$ is an extension of $M'$ by $M''$ then $\delta(M)=\max(\delta(M'), \delta(M''))$.
\item If $M$ is non-torsion then $\delta(\coker(M \to \Sigma^k(M)))=\delta(M)-1$, for any nonzero $k$.
\item Suppose $\bk$ is a field and $M$ is finitely generated. There then exists a polynomial $p$ such that $p(n)=\dim_{\bk}{M_n}$ for all $n \gg 0$ \cite[Theorem~B]{fi-noeth}. We have $\delta(M)=\deg(p)$, using the convention that the degree of the zero polynomial is $-1$.
\end{enumerate}
The above properties are all reasonably straightforward exercises. Some details can be found in \cite{djament-vespa}, where $\FI$-modules of degree~$n$ are treated as degree~$n$ weakly polynomial functors on the category $\Theta=\FI$.

\begin{remark} We note that $\delta(M)  = g(\Sigma^n M)$ for $n$ large enough (see Theorem~\ref{thm:shift}). In the case when $\bk$ is a field we can take property (f) above as the definition of $\delta(M)$ and then the rest of the properties easily follow. This is the case that we use.
\end{remark}

\subsection{Resolutions by induced and semi-induced modules}

The following is a fundamental result on the structure of $\FI$-modules due to the first author \cite[Theorem~A]{nagpal}.

\begin{theorem} \label{thm:shift}
Let $M$ be a finitely generated $\FI$-module. Then $\Sigma^n(M)$ is semi-induced for $n \gg 0$.
\end{theorem}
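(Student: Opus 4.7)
The plan is to reduce the theorem to the existence of an ``approximate semi-induced resolution'' of $M$ and then use the shift functor to clean up the torsion.

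The key ingredient is the following claim: for every finitely generated $\FI$-module $M$, there is a complex
\begin{displaymath}
0 \to M \to I^0 \to I^1 \to \cdots \to I^k \to 0
\end{displaymath}
in which each $I^j$ is semi-induced and all cohomology groups of the complex are torsion $\FI$-modules. This is the main obstacle, and where essentially all the work lies. The strategy is to embed $M$ into a semi-induced module $I^0$ modulo torsion, then iterate on the cokernel; noetherianity of $\Mod_{\FI}$ guarantees that the successive cokernels are finitely generated, and one must argue that the process terminates in finitely many steps. A clean way to package this is to pass to the Serre quotient $\Mod_{\FI}/\Mod_{\FI}^{\tors}$, in which semi-induced modules play the role of injectives, and to show that every finitely generated $\FI$-module has bounded injective dimension there; such a bound can be extracted from the finiteness of $\delta(M)$ together with the interaction of $\delta$ with kernels and cokernels.

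Granting the claim, the rest of the argument is formal. Each cohomology group of the complex is a subquotient of a finitely generated $\FI$-module, hence itself finitely generated by noetherianity, and also torsion by hypothesis. A finitely generated torsion $\FI$-module vanishes in all sufficiently high degrees, since each of its finitely many generators is annihilated by every $\FI$-morphism out of its degree past some level (by functoriality combined with $S_n$-equivariance). Therefore, for $n$ sufficiently large, $\Sigma^n$ annihilates every cohomology group, so that
\begin{displaymath}
0 \to \Sigma^n(M) \to \Sigma^n(I^0) \to \cdots \to \Sigma^n(I^k) \to 0
\end{displaymath}
is an exact sequence in $\Mod_{\FI}$.

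To conclude, split this exact sequence into short exact sequences $0 \to K^j \to \Sigma^n(I^j) \to K^{j+1} \to 0$ with $K^0 = \Sigma^n(M)$ and $K^{k+1} = 0$. Each $\Sigma^n(I^j)$ is semi-induced, because a shift of an induced module $\cI(V)$ decomposes as $\cI(V) \oplus \cI(\Sigma V)$, and this property is inherited by filtrations. Starting from the tautologically semi-induced $K^k = \Sigma^n(I^k)$, a downward induction on $j$ using Proposition~\ref{prop:short-exact-sequence-semi-induced} (the two-out-of-three property) shows that each $K^j$ is semi-induced; in particular $K^0 = \Sigma^n(M)$ is semi-induced, completing the proof. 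Thus the only serious work is in the construction of the near-resolution by semi-induced modules, which is where I expect the main difficulty to lie.
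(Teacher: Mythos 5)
The paper does not prove Theorem~\ref{thm:shift} at all: it is cited as \cite[Theorem~A]{nagpal} (where it is proved by a direct analysis of the ``derivative'' $\Sigma M/M$ and an induction on $\delta(M)$), and the paper then \emph{deduces} the near-resolution statement (Theorem~\ref{thm:semires}) from it. Your proposal runs in the opposite direction: it assumes the existence of the near-resolution and deduces the shift theorem from it. That deduction is correct and clean (the vanishing-in-high-degrees argument for finitely generated torsion modules is right, and the two-out-of-three inductive step via Proposition~\ref{prop:short-exact-sequence-semi-induced} is exactly how one would close it). The problem is that the entire mathematical content has been displaced into your ``key claim,'' and your sketch of that claim is either circular or leaves everything unproved.

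Here is the circularity concretely. To get the iteration going you need, for each finitely generated $M$, a map $M \to I^0$ with $I^0$ finitely generated semi-induced and torsion kernel, \emph{and} you need $\delta$ of the cokernel to strictly drop so the process terminates. The only general mechanism in this circle of ideas for producing such an $I^0$ with a controlled degree drop is $I^0 = \Sigma^n M$ for $n \gg 0$: one has $\delta(\coker(M \to \Sigma^n M)) = \delta(M) - 1$ (property~(e) of $\delta$), whereas for an arbitrary semi-induced $I^0$ receiving $M$ modulo torsion the cokernel can have the same degree as $M$ and the induction stalls. But knowing that $\Sigma^n M$ is semi-induced is precisely Theorem~\ref{thm:shift}. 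Your fallback suggestion — pass to the Serre quotient $\Mod_{\FI}/\Mod_{\FI}^{\tors}$ and invoke that semi-induced modules are injective there with bounded injective dimension — is not an easier route: the injectivity of (images of) semi-induced modules in the generic category, the existence of enough \emph{finitely generated} such injectives, and the dimension bound are all nontrivial statements that in the literature (Djament's Theorem~A.9, cited here as Theorem~\ref{thm:perfect}) are established by arguments of comparable difficulty and intertwined with the shift theorem itself. Saying a bound ``can be extracted from the finiteness of $\delta(M)$ together with the interaction of $\delta$ with kernels and cokernels'' asserts the conclusion without an argument. So the proposal correctly identifies where the difficulty lies and handles the easy half, but it does not supply a proof of the hard half, and the one concrete strategy it offers for that half would require the theorem itself as input.
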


As a consequence of this theorem, one obtains the following result \cite[Theorem~A]{nagpal}, which is of paramount importance to this paper:

\begin{theorem} \label{thm:semires}
Let $M$ be a finitely generated $\FI$-module of degree $d$. Then there exists a complex
\begin{displaymath}
0 \to M \to I^0 \to \cdots \to I^d \to 0
\end{displaymath}
with torsion cohomology, where $I^k$ is finitely generated and semi-induced of degree $\le \delta(M)-k$.
\end{theorem}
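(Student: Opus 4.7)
The plan is to induct on $d = \delta(M)$. The base case $d = -1$ is immediate: $M$ is torsion, and the complex $0 \to M \to 0$ (with no terms $I^k$) has its single cohomology $M$, which is torsion. For $d \geq 0$, the strategy is to embed $M$ modulo torsion into its own iterated shift, which by Theorem~\ref{thm:shift} becomes semi-induced after sufficiently many shifts. Concretely, I would choose $n$ large enough that $I^0 := \Sigma^n(M)$ is semi-induced; this is automatically finitely generated since $\Sigma$ preserves finite generation. The natural map $\iota \colon M \to \Sigma^n(M)$ has torsion kernel, since this map itself is (iterated) multiplication by a transition element, so anything it kills is by definition torsion. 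By property (e) of the degree function, $K := \coker(\iota)$ satisfies $\delta(K) = d - 1$, while the short exact sequence $0 \to M/\ker(\iota) \to I^0 \to K \to 0$ combined with properties (c) and (d) gives $\delta(I^0) = d$, so $\delta(I^0) \leq \delta(M)-0$ as desired.

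Next, I would apply the inductive hypothesis to $K$, which has degree $d-1$, to obtain a complex $0 \to K \to J^0 \to \cdots \to J^{d-1} \to 0$ with torsion cohomology, where each $J^k$ is finitely generated and semi-induced of degree $\leq d - 1 - k$. Splicing, I set $I^{k+1} := J^k$ for $0 \leq k \leq d-1$ and define the map $I^0 \to I^1$ as the composition $I^0 \twoheadrightarrow K \hookrightarrow J^0$. This yields a complex $0 \to M \to I^0 \to I^1 \to \cdots \to I^d \to 0$ with $\delta(I^k) \leq d - k$. To check torsion cohomology at each spot: at $M$ it is the kernel of $\iota$, which is torsion; at $I^0$ it vanishes because $\ker(I^0 \to J^0) = \ker(I^0 \to K) = \mathrm{image}(\iota)$, using that $K \hookrightarrow J^0$ is injective (as it is the leftmost nonzero map of the $K$-complex); and at $I^{k+1}$ for $k \geq 0$ the cohomology coincides with that of the $K$-complex at $J^k$, hence is torsion by induction.

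The conceptual heart is the observation that $M$ embeds modulo torsion into $\Sigma^n M$, which for large $n$ is semi-induced of the \emph{same} degree as $M$, while the cokernel has strictly smaller degree---precisely what drives the induction. The real difficulty is therefore packed into Theorem~\ref{thm:shift}; once that structural input is available, the remaining work is just splicing and verifying torsion cohomology, which are routine. A minor bookkeeping point worth flagging is the degree check for $I^0$, which relies on properties (c)--(e) applied carefully to distinguish the torsion kernel from the lower-degree cokernel in the factorization of $\iota$.
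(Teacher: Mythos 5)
Your argument is essentially the paper's own: induct on degree, set $I^0 = \Sigma^n(M)$ using Theorem~\ref{thm:shift}, note that $M \to \Sigma^n(M)$ has torsion kernel and cokernel $K$ of degree $\delta(M)-1$, then splice the inductive complex for $K$. The one slip is the claim that $K \hookrightarrow J^0$ is injective and hence that the spliced complex is exact at $I^0$: the inductive hypothesis only guarantees \emph{torsion} cohomology for $0 \to K \to J^0 \to \cdots$, so $\ker(K \to J^0)$ is torsion but need not vanish (already in the case $d=0$ the cokernel $K$ is torsion and the ``$K$-complex'' is $0 \to K \to 0$, so the cohomology at $I^0$ equals $K$, not $0$). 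In general, $\ker(I^0 \to I^1)$ is the preimage under $I^0 \twoheadrightarrow K$ of $\ker(K \to J^0)$, and modding out by $\im(\iota) = \ker(I^0 \to K)$ gives that the cohomology at $I^0$ is isomorphic to $\ker(K \to J^0)$ --- which is torsion by the inductive hypothesis, so the conclusion still holds. With this correction the proof matches the paper's.
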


It is easy to deduce Theorem~\ref{thm:semires} from Theorem~\ref{thm:shift}, as follows. Given $M$, let $n \gg 0$ such that $\Sigma^n(M)$ is semi-induced. The map $M \to \Sigma^n(M)$ has torsion kernel and $\delta(\coker) \le \delta(M) -1$. Thus, by induction on degree, there exists a complex as in the theorem for the cokernel, and this gives one for $M$ as well, with $I^0=\Sigma^n(M)$. The proof of the theorem applies equally well if $M$ is a complex, rather than a module, and leads to the following result:

\begin{theorem} \label{thm:triangle}
Let $M^{\bullet}$ be a finite length complex of finitely generated $\FI$-modules. Then, in the derived category, there exists an exact triangle
\begin{displaymath}
T^{\bullet} \to M^{\bullet} \to I^{\bullet} \to
\end{displaymath}
where $T^{\bullet}$ is a finite length complex of finitely generated torsion modules and $I^{\bullet}$ is a finite length complex of finitely generated semi-induced modules.
\end{theorem}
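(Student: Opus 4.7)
The plan is to mimic the inductive proof sketched for Theorem~\ref{thm:semires}, now at the level of bounded complexes. I would induct on $N = \max_k \delta(M^k)$, which is finite since $M^\bullet$ is a bounded complex of finitely generated $\FI$-modules. The base case $N = -1$ is immediate: every $M^k$ is torsion, so $T^\bullet = M^\bullet$ and $I^\bullet = 0$ work.

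For the inductive step, Theorem~\ref{thm:shift} lets me pick $n \gg 0$ such that, for each of the finitely many nonzero $k$, the module $\Sigma^n(M^k)$ is semi-induced and the kernel of $\phi^k \colon M^k \to \Sigma^n(M^k)$ equals the torsion submodule $T^k \subseteq M^k$. Since $\FI$-module maps preserve torsion, the $T^k$ form a subcomplex $T^\bullet \subseteq M^\bullet$, giving a four-term exact sequence of complexes
\begin{displaymath}
0 \to T^\bullet \to M^\bullet \xrightarrow{\phi} \Sigma^n M^\bullet \to R^\bullet \to 0,
\end{displaymath}
where $T^\bullet$ is a bounded complex of finitely generated torsion modules, $\Sigma^n M^\bullet$ is a bounded complex of finitely generated semi-induced modules, and $R^\bullet$ is a bounded complex of finitely generated modules satisfying $\max_k \delta(R^k) \le N-1$ by property (e) of the degree (together with the observation that $R^k = 0$ when $M^k$ is torsion and $n$ is large). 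Applying the inductive hypothesis to $R^\bullet$ produces a triangle $T_R^\bullet \to R^\bullet \to I_R^\bullet \to$, which I arrange at the chain level as an actual chain map $R^\bullet \to I_R^\bullet$ to a bounded complex of semi-induced modules whose mapping cone is a bounded complex of finitely generated torsion modules.

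To combine, I form a bounded complex $I^\bullet$ via the mapping-cone construction applied to the chain-level composite $\Sigma^n M^\bullet \to R^\bullet \to I_R^\bullet$, so that $I^k = \Sigma^n M^k \oplus I_R^{k-1}$; since direct sums of semi-induced modules are semi-induced, $I^\bullet$ is a bounded complex of finitely generated semi-induced modules, and it sits in a short exact sequence $0 \to I_R^\bullet[-1] \to I^\bullet \to \Sigma^n M^\bullet \to 0$. The natural chain map $M^\bullet \to I^\bullet$ sending $m$ to $(\phi(m),0)$ is well defined because the composite $\Sigma^n M^\bullet \to I_R^\bullet$ vanishes on the image of $\phi$ (as it factors through $R^\bullet$). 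Two applications of the octahedral axiom---first to $\Sigma^n M^\bullet \to R^\bullet \to I_R^\bullet$ and then to the composite $M^\bullet \to Q^\bullet \to I^\bullet$ with $Q^\bullet = M^\bullet/T^\bullet$---identify the mapping cone of $M^\bullet \to I^\bullet$ as an iterated extension of shifts of $T^\bullet$ and $T_R^\bullet$, hence quasi-isomorphic to a bounded complex of finitely generated torsion modules, completing the induction.

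The main technical obstacle is the chain-level bookkeeping: one must check that the octahedral manipulations produce an $I^\bullet$ and a torsion part that truly have the prescribed forms rather than being merely quasi-isomorphic to such. The key inputs are Proposition~\ref{prop:short-exact-sequence-semi-induced} and closure of semi-induced modules under direct sums, together with the observation that extensions (in the derived category) of bounded complexes of torsion $\FI$-modules are quasi-isomorphic to bounded complexes of torsion modules, since the torsion modules form a Serre subcategory.
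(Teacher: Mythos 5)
Your proof is correct and follows exactly the route the paper intends: the paper's own justification is the one-line remark that the inductive argument deducing Theorem~\ref{thm:semires} from Theorem~\ref{thm:shift} ``applies equally well if $M$ is a complex,'' and you have simply written out that induction (on $\max_k \delta(M^k)$, shifting the whole complex by $\Sigma^n$, passing to the cokernel complex, and totalizing by mapping cones).

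One small remark on the ``main technical obstacle'' you flag. The fact that $\cT = \Mod_{\FI}^{\tors}$ is a Serre subcategory is not, by itself, enough to conclude that a bounded complex of finitely generated $\FI$-modules with torsion cohomology is quasi-isomorphic to a bounded complex of finitely generated torsion modules -- for a general Serre subcategory $\cB \subset \cA$, the functor $D^b(\cB) \to D^b_{\cB}(\cA)$ need not be an equivalence. Here one should use something specific to $\FI$: for a bounded complex $C^{\bullet}$ of finitely generated modules with torsion cohomology, the subcomplex $\sigma_{>N} C^{\bullet}$ (zero in $\FI$-degrees $\le N$, equal to $C^{\bullet}$ in degrees $>N$) is an honest subcomplex and, for $N \gg 0$, is exact, because each of the finitely many cohomology modules is finitely generated torsion and hence supported in bounded $\FI$-degree. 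Therefore $C^{\bullet} \to \sigma_{\le N}C^{\bullet}$ is a quasi-isomorphism onto a bounded complex of finitely generated torsion modules. Applying this to $\mathrm{Cone}(M^{\bullet} \to I^{\bullet})[-1]$ at the end of your construction sidesteps the need to track the torsion piece through the two octahedra; the rest of your argument is as in the paper.
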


The following result is a refinement of \cite[Theorem~A.9(3)]{djament}.

\begin{proposition}
\label{prop:induced-resolution}
Let $M$ be a semi-induced module with $\delta(M) \le d$. Then $M$ admits a resolution $F_{\bullet} \to M \to 0$ of length at most $d + 1$ such that each $F_i$ is induced.
\end{proposition}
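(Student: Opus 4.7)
I would proceed by induction on $d = \delta(M)$. The base case $d = -1$ is immediate: semi-induced modules are torsion-free (being built by extensions from the torsion-free induced modules), so $M = 0$ and the empty resolution works. For the inductive step, the goal is to produce a short exact sequence $0 \to M' \to F_0 \to M \to 0$ with $F_0$ finitely generated induced and $M'$ finitely generated semi-induced satisfying $\delta(M') \le d - 1$. Granting this, Proposition~\ref{prop:short-exact-sequence-semi-induced} yields that $M'$ is semi-induced, the inductive hypothesis provides an induced resolution of $M'$ of length $\le d$, and prepending $F_0$ gives an induced resolution of $M$ of length $\le d + 1$.

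To construct the sequence, I would exploit the semi-induced filtration of $M$. Using the adjunction $\Hom_\bA(\cI(V), -) = \Hom_{S_*}(V, -)$ from Proposition~\ref{prop:tensor-hom-adjunction} and its derived version $\Ext^1_\bA(\cI(V), \cI(W)) \cong \Ext^1_{S_d}(V, W)$ for $V, W$ concentrated in degree $d$, one sees that same-degree induced pieces of a semi-induced filtration amalgamate (an extension of $\cI(V)$ by $\cI(W)$ in a common degree $d$ is $\cI$ applied to the corresponding $S_d$-representation extension). This allows one to refine the filtration into the form $0 = M^{(-1)} \subset M^{(0)} \subset \cdots \subset M^{(d)} = M$ with $M^{(i)}/M^{(i-1)} = \cI(W_i)$ induced and $W_i$ concentrated in degree $i$. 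In particular, $M^{(d-1)}$ is semi-induced with $\delta(M^{(d-1)}) \le d - 1$ and $M/M^{(d-1)} = \cI(W_d)$ is induced of pure degree $d$. Choosing a projective $S_d$-representation $P_d$ with $P_d \twoheadrightarrow W_d$, lifting to $P_d \to M_d$ by projectivity, and adjointly producing $\cI(P_d) \to M$, I would take $F_0 = \cI(P_d) \oplus G_0$ where $G_0 \twoheadrightarrow M^{(d-1)}$ is an induced cover.

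The main obstacle is arranging the construction so that the kernel of $F_0 \to M$ strictly drops in degree to $\le d - 1$. A snake-lemma analysis shows that this kernel sits in an extension whose outer quotient is $\cI(\ker(P_d \to W_d))$, an induced module of degree $d$; naive choices therefore do not reduce the degree. Over a field of modular characteristic, where $S_d$-representations may have infinite projective dimension, additional care is required: one must choose $G_0$ and $P_d$ together so that the $S_d$-syzygies $\ker(P_d \to W_d)$ are absorbed into the lower-degree filtration pieces of $M^{(d-1)}$. This joint choice---made possible by the coherence of $\bA$ and the length-$(d+1)$ refined filtration---is the technical heart of the argument, and reflects the essential distinction between induced resolutions (which terminate in bounded length) and projective resolutions (which may not).
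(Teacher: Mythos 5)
The inductive step of your plan is not carried out, and you say so yourself: the final paragraph (``this joint choice \ldots is the technical heart of the argument'') is a gesture at a hope, not an argument. This is a genuine gap, not a detail. In fact there is no reason to expect $\delta$ to drop at all: if $M=\cI(V)$ with $V$ concentrated in degree $d$ and $V$ not projective over $\bk[S_d]$, any cover $\cI(P)\twoheadrightarrow \cI(V)$ coming from a projective cover $P\twoheadrightarrow V$ has kernel $\cI(\ker(P\to V))$, again induced of degree $d$; over a modular field the $S_d$-syzygies can persist indefinitely, and nothing about coherence of $\bA$ or a refined filtration rescues this.

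The paper's proof avoids the issue by tracking a different invariant. Fix $d$ as a bound on $\delta$ for all modules that will appear; the induction is \emph{downward} on $r$, the smallest degree in which $\Tor_0^{\bA}(\bk,M)$ is nonzero (i.e.\ the smallest generating degree), with the target statement being a resolution of length $\le d-r+1$. The cover is chosen to be \emph{as large as possible}, not a minimal or projective cover: $F_0=\bigoplus_{0\le k\le d}\cI(M_k)$ with the tautological surjection $\psi\colon F_0\to M$ from Proposition~\ref{prop:tensor-hom-adjunction}. One checks $\Tor_0^{\bA}(\bk,F_0)_k\cong\Tor_0^{\bA}(\bk,M)_k$ for all $k\le r$ (both vanish for $k<r$, both equal $M_r$ at $k=r$). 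The key input is that semi-induced modules have no higher $\Tor^{\bA}$ (\cite[Theorem~B]{ramos}); feeding $0\to\ker\psi\to F_0\to M\to 0$ into the long exact sequence in $\Tor$ then gives $\Tor_0^{\bA}(\bk,\ker\psi)_k=0$ for $k\le r$. So the kernel's minimal generating degree strictly increases, while $\delta(\ker\psi)\le\delta(F_0)\le d$ is merely preserved. The kernel is semi-induced by Proposition~\ref{prop:short-exact-sequence-semi-induced}, and the downward induction terminates because $r$ is bounded above by $d$. In short: you tried to make the kernel small so that $\delta$ drops; the paper makes the surjection big so that the generating degree climbs, and never needs $\delta$ to drop.
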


\begin{proof}
Since $g(M)  \le d$, $\Tor^{\bA}_0(\bk, M)_r  = 0$ for $r > d$. Let $r$ be the least integer such that $\Tor^{\bA}_0(\bk, M)_r$ is non-trivial (if no such $r$ exists then $M=0$ by the Nakayama lemma and there is nothing to prove). We prove by downward induction on $r$ that there is a resolution $F_{\bullet}$ of $M$ of length $d - r+1$ where each $F_i$ is a direct sum of induced modules.  Let $F_0 = \bigoplus_{0 \le k \le d} \cI(V_k)$ where $V_k = M_k$. We note that    $\Tor^{\bA}_0(\bk,M)_r = V_r = \Tor^{\bA}_0(\bk,F_0)_r$ and $\Tor^{\bA}_0(\bk,M)_k = 0= \Tor^{\bA}_0(\bk,F_0)_k $ for $k<r$. By construction, $\delta(F_0) \le d$ and there is a surjection $\psi \colon F_0 \to M$ (Proposition~\ref{prop:tensor-hom-adjunction}). We have $\Tor^{\bA}_0(\bk,\ker(\psi))_k =0$ for $k \le r$ (semi-induced modules have no higher Tor; \cite[Theorem~B]{ramos}). By Proposition~\ref{prop:short-exact-sequence-semi-induced}, $\ker(\psi)$ is semi-induced. Clearly, we have $\delta(\ker(\psi)) \le d$. Thus by downward induction,   $\ker(\psi)$ admits a resolution of the desired format. We can append $F_0$ to this resolution to get a resolution of $M$, completing the proof.
\end{proof}

In the proof above, the construction of $F_0$ (and thus $F_i$) is functorial in $M$, that is, if $f \colon M \to N$ is a map of semi-induced $\FI$-modules with $\delta(M), \delta(N) \le d$ then there is a natural map $F_0(f) \colon F_0(M) \to F_0(N)$ making the obvious diagram commute. Thus the proof of the proposition applies equally well if $M$ is a complex of semi-induced modules generated in degree $\le d$, rather than a module. Combining this observation with Theorem~\ref{thm:triangle}, we obtain:

\begin{theorem} \label{thm:triangle-induced}
Let $M^{\bullet}$ be a finite length complex of finitely generated $\FI$-modules with $\delta(M^i) \le d$ for each $i$. Then, in the derived category, there exists an exact triangle
\begin{displaymath}
T^{\bullet} \to M^{\bullet} \to I^{\bullet} \to
\end{displaymath}
where $T^{\bullet}$ is a finite length complex of finitely generated torsion modules and $I^{\bullet}$ is a finite length complex consisting of direct sums of induced modules with $\delta(I^i) \le d$ for each $i$.
\end{theorem}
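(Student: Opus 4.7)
The plan is to combine Theorem~\ref{thm:triangle} with a functorial, iterated version of the $F_0$ construction from the proof of Proposition~\ref{prop:induced-resolution}. First apply Theorem~\ref{thm:triangle} to $M^{\bullet}$ to obtain an exact triangle
\[
T^{\bullet} \to M^{\bullet} \to J^{\bullet} \to
\]
where $T^{\bullet}$ is a finite length complex of finitely generated torsion modules and $J^{\bullet}$ is a finite length complex of finitely generated semi-induced modules. The degree bound $\delta(J^i) \le d$ is inherited from $M^{\bullet}$: the construction sketched for Theorem~\ref{thm:semires} builds the semi-induced replacement of a module $M$ out of iterated shifts $\Sigma^n(M)$ and cokernels, both of which preserve the bound $\delta \le d$, and passing to complexes amounts to totaling such termwise replacements.

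Next, replace $J^{\bullet}$ by a quasi-isomorphic complex of direct sums of induced modules of degree $\le d$. The assignment $N \mapsto F_0(N) = \bigoplus_{0 \le k \le d} \cI(N_k)$ from the proof of Proposition~\ref{prop:induced-resolution}, together with the surjection $F_0(N) \twoheadrightarrow N$, is visibly functorial in the semi-induced module $N$ of degree $\le d$. Applying it termwise to $J^{\bullet}$ produces a map of complexes $F_0(J^{\bullet}) \to J^{\bullet}$ whose termwise kernel $K_1^{\bullet}$ is again a complex of semi-induced modules of degree $\le d$ (using Proposition~\ref{prop:short-exact-sequence-semi-induced}). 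Iterating produces a bicomplex whose $j$-th column resolves $J^j$ and whose entries are direct sums of induced modules of degree $\le d$. Let $I^{\bullet}$ be its total complex; then each $I^n$ is a direct sum of induced modules of degree $\le d$, and a standard acyclic-assembly argument shows the induced map $I^{\bullet} \to J^{\bullet}$ is a quasi-isomorphism. Composing with the triangle from the first step yields the desired triangle $T^{\bullet} \to M^{\bullet} \to I^{\bullet} \to$ in the derived category.

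The main obstacle is verifying that this iterated construction terminates uniformly across the finitely many columns of the bicomplex, so that the total complex $I^{\bullet}$ has finite length. This is handled exactly as in the proof of Proposition~\ref{prop:induced-resolution}: each iterated kernel $K_j^i$ is semi-induced and satisfies $g(K_j^i) = \delta(K_j^i) \le d$, and the computation $\Tor^{\bA}_0(\bk, \ker F_0(N))_k = 0$ for $k$ at most the least generating degree of $N$ shows that applying $F_0$ strictly raises the least degree in which $\Tor^{\bA}_0(\bk, -)$ is nonzero on each term. After at most $d+1$ iterations this invariant exceeds $d$ on every term, forcing each $K_j^i$ to vanish by Nakayama. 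Since there are only finitely many columns, the termination is uniform, and the resulting bicomplex is bounded, completing the proof.
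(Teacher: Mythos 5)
Your argument is correct and matches the paper's own approach: first apply Theorem~\ref{thm:triangle} to reduce to a complex of semi-induced modules, then exploit the functoriality of the $F_0$ construction from Proposition~\ref{prop:induced-resolution} to build a termwise resolution by induced modules, taking the total complex as $I^{\bullet}$. The paper states this in a single terse sentence; your write-up fills in the details (the bicomplex, the degree bookkeeping, and the uniform termination via the rising $\Tor^{\bA}_0$ degree), all of which are exactly the considerations the paper is tacitly invoking.
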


\subsection{Derived saturation}

For our purposes, it will be useful to have a better understanding of the complex in Theorem~\ref{thm:semires}. To state the relevant results, it will be convenient to introduce  derived saturation. In characteristic~0, this theory was fully developed in \cite{symc1}. We quickly sketch here how the ideas work over general rings $\bk$.

Define the {\bf generic category}, denoted $\Mod_{\FI}^{\gen}$, to be the Serre quotient of $\Mod_{\FI}$ by the torsion subcategory $\Mod_{\FI}^{\tors}$. We let $\rT \colon \Mod_{\FI} \to \Mod_{\FI}^{\gen}$ be the localization functor, which is exact, and write $\rS \colon \Mod_{\FI}^{\gen} \to \Mod_{\FI}$ for the right adjoint of $\rT$, the section functor. (This adjoint exists by the general theory of Grothendieck abelian categories:  $\Mod_{\FI}^{\tors}$ is a localizing subcategory.) We let $\bS=\rS \circ \rT$, the saturation functor. The saturation functor is left-exact, and we write $\rR \bS$ for its right-derived functor. We say that an $\FI$-module $M$ is {\bf saturated} (resp. {\bf derived saturated}) if the natural map $M \to \bS(M)$ (resp.\ $M \to \rR \bS(M)$) is an isomorphism. By \cite[Corollary~A.4]{djament}, $M$ is saturated if and only if $M$ is torsion free and the cokernel of the natural map $M \to \Sigma^k(M)$ is torsion free for each $k$.

%
%

\begin{theorem}[{\cite[Theorem~A.9]{djament}}]
\label{thm:perfect}
A finitely generated $\FI$-module is semi-induced if and only if it is derived saturated.
\end{theorem}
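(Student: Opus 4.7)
The plan is to establish both implications, with the forward direction (semi-induced implies derived saturated) serving as the main ingredient for the converse. I start by noting that the class of derived saturated $\FI$-modules is closed under extensions: given a short exact sequence $0 \to M' \to M \to M'' \to 0$, the exactness of $\rT$ combined with the long exact sequence for the right-derived functors of $\rS$ and the five-lemma applied to the natural map $M \to \rR\bS(M)$ yield this closure. Consequently, devissage along a semi-induced filtration reduces the forward direction to showing each induced module $\cI(V)$, with $V$ finitely generated and concentrated in a single degree, is derived saturated.

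For such $\cI(V)$, the saturation part $\rR^0\bS(\cI(V)) = \cI(V)$ follows immediately from the criterion recalled in the excerpt: induced modules are visibly torsion-free, and the identity $\Sigma(\cI(V)) = \cI(V) \oplus \cI(\Sigma V)$, iterated, shows that each cokernel $\Sigma^k \cI(V) / \cI(V)$ is again a finite direct sum of induced modules, hence torsion-free. The genuine difficulty is the higher vanishing $\rR^i\bS(\cI(V)) = 0$ for $i \ge 1$. The plan here is to reduce to the assertion that $\Ext^i_{\Mod_{\FI}}(T, \cI(V))$ vanishes for every torsion $\FI$-module $T$ and every $i \ge 0$, and then to verify this by using the presentation $\cI(V) = \bA \otimes V$ together with Frobenius reciprocity: any finitely generated torsion $T$ is annihilated by $\Sigma^N$ for some $N$, and $\Hom$ and $\Ext$ into an induced module can be computed degree by degree, which forces the vanishing. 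I expect this step to be the main obstacle in the proof.

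For the backward direction, suppose $M$ is finitely generated and derived saturated. Theorem~\ref{thm:semires} supplies a complex $C^{\bullet} = (0 \to M \to I^0 \to \cdots \to I^d \to 0)$ with torsion cohomology. Since $\rT$ is exact and annihilates torsion, the image $\rT(C^{\bullet})$ is exact, so $\rT(I^{\bullet})$ resolves $\rT(M)$ in the generic category. Applying $\rR\rS$ termwise, which is justified by the already-proven forward direction that makes each $I^j$ acyclic for the saturation functor, we obtain $\rR\bS(M) \simeq I^{\bullet}$ in the derived category of $\FI$-modules. Since $M$ is derived saturated, $M \simeq \rR\bS(M) \simeq I^{\bullet}$. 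This forces the torsion cohomology of $C^{\bullet}$ to vanish, so the sequence $0 \to M \to I^0 \to \cdots \to I^d \to 0$ is in fact exact.

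To finish, read this exact sequence from the right and apply Proposition~\ref{prop:short-exact-sequence-semi-induced} inductively: the kernel $\ker(I^{d-1} \to I^d)$ is semi-induced because both $I^{d-1}$ and $I^d$ are; then $\ker(I^{d-2} \to I^{d-1})$ is semi-induced for the same reason; and so on back to $M = \ker(I^0 \to I^1)$. Thus $M$ is semi-induced, completing the proof. The only substantive difficulty in this program is the higher vanishing of derived saturation on induced modules; everything else is derived-category bookkeeping built on top of Theorem~\ref{thm:semires} and the closure properties of the semi-induced class.
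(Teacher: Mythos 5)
The paper does not prove this theorem; it is cited verbatim to Djament (\cite[Theorem~A.9]{djament}), so there is no internal argument to compare your proposal against. Your overall strategy is a sensible one, and the backward direction is essentially correct as written: once one knows that semi-induced modules are derived saturated, the resolution $0 \to M \to I^0 \to \cdots \to I^d \to 0$ of Theorem~\ref{thm:semires} becomes an $\rS$-acyclic resolution of $\rT(M)$, the hypothesis that $M$ is derived saturated forces the complex to be exact, and the two-out-of-three property (Proposition~\ref{prop:short-exact-sequence-semi-induced}) applied from the right then yields that $M$ is semi-induced. I also agree that the class of derived saturated modules is closed under extensions, and that the easy half of the forward direction --- $\bS(\cI(V)) = \cI(V)$, i.e.\ that $\cI(V)$ and the cokernels $\Sigma^k\cI(V)/\cI(V)$ are torsion-free --- follows as you describe from the identity $\Sigma(\cI(V)) = \cI(V) \oplus \cI(\Sigma V)$.

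However, there is a genuine gap exactly where you flag one. The heart of the forward direction is the vanishing $\rR^i\bS(\cI(V)) = 0$ for $i \ge 1$, equivalently (as you correctly observe) the vanishing of $\Ext^i_{\Mod_{\FI}}(T, \cI(V))$ for every torsion $T$ and every $i \ge 0$. Your proposed justification --- pass to $\cI(V) = \bA \otimes V$, invoke Frobenius reciprocity, use that a finitely generated torsion module is annihilated by $\Sigma^N$, and compute ``degree by degree'' --- does not actually yield the vanishing. Frobenius reciprocity (Proposition~\ref{prop:tensor-hom-adjunction}) computes $\Hom$ and $\Ext$ \emph{out of} $\cI(V)$, not \emph{into} it, and computing $\Ext^i(T,\cI(V))$ requires a projective (or free) resolution of $T$, for which the shift-nilpotence of $T$ gives no direct control over the higher $\Ext$ groups. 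Indeed, this $\Ext$ vanishing is precisely the nontrivial content of Djament's Theorem~A.9; in the literature it is established either by an explicit analysis of injective resolutions of induced modules (showing that the torsion part of such a resolution is acyclic) or, in Djament's framework, by an argument with polynomial functors and the d\'ecalage functor. As it stands your write-up reduces the theorem to this vanishing but does not prove it, so the forward direction --- and hence the theorem --- remains open in your argument.
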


\begin{corollary}
\label{cor:derived-saturation}
Let $M$ be a finitely generated $\FI$-module, and let $M \to I^{\bullet}$ be the complex of Theorem~\ref{thm:semires}. Then $\rR \bS(M)$ is quasi-isomorphic to $I^{\bullet}$. In particular,  $\rR^i \bS(M)$ is finite for all $i \ge 0$ and vanishes for $i \gg 0$.
\end{corollary}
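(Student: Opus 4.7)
The plan is to leverage the complex $M \to I^{\bullet}$ of Theorem~\ref{thm:semires} in conjunction with Theorem~\ref{thm:perfect}, by applying the derived saturation functor. The key observation is that the complex $M \to I^{\bullet}$ has torsion cohomology, so after killing torsion (i.e., after applying $\rT$) it should become a quasi-isomorphism, and then saturating term-by-term should recover the original complex.

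First, I would view the augmented complex $M \to I^{\bullet}$ as a morphism $f \colon M[0] \to I^{\bullet}$ in the derived category $D(\Mod_{\FI})$, whose mapping cone $\operatorname{Cone}(f)$ is (up to shift) the original complex and therefore has torsion cohomology. Since $\rT$ is exact, it extends to a triangulated functor on $D(\Mod_{\FI})$ satisfying $H^i(\rT(C^\bullet)) = \rT(H^i(C^\bullet))$, so $\rT$ annihilates any bounded complex with torsion cohomology. Hence $\rT(\operatorname{Cone}(f)) = 0$, i.e.\ $\rT(f)$ is an isomorphism in $D(\Mod_{\FI}^{\gen})$. Applying the derived right adjoint $\rR\rS$ and using that $\rR\bS = \rR\rS \circ \rT$ (no correction term, since $\rT$ is already exact), this yields a canonical isomorphism $\rR\bS(M) \cong \rR\bS(I^{\bullet})$ in $D(\Mod_{\FI})$.

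Next, I would argue that $I^{\bullet} \to \rR\bS(I^{\bullet})$ is itself a quasi-isomorphism. By Theorem~\ref{thm:perfect}, each $I^k$ is derived saturated, so the unit map $I^k \to \rR\bS(I^k)$ is an isomorphism in $D(\Mod_{\FI})$. The full subcategory of derived saturated objects in $D(\Mod_{\FI})$ is closed under shifts, direct summands, and cones (the unit $\operatorname{id} \to \rR\bS$ is a natural transformation of triangulated functors), and therefore contains the thick subcategory generated by the $I^k$'s; in particular it contains the bounded complex $I^{\bullet}$. Alternatively, a short induction on the length of $I^{\bullet}$ via the stupid truncation triangles $\sigma_{\ge 1}I^{\bullet} \to I^{\bullet} \to I^0 \to$ does the job. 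Combining this with the previous step gives the desired quasi-isomorphism $\rR\bS(M) \simeq I^{\bullet}$.

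For the final sentence, I would simply note that each $I^k$ is finitely generated, so by the noetherianity theorem for $\FI$-modules each cohomology $H^i(I^{\bullet}) = \rR^i\bS(M)$ is finitely generated, and it vanishes for $i > d$ since $I^{\bullet}$ has length $d+1$. I do not anticipate a serious obstacle: the only subtlety is book-keeping around the fact that $\rT$ is defined on $\Mod_{\FI}$ rather than directly on complexes, and ensuring that the identifications $\rR\bS \cong \rR\rS \circ \rT$ and ``derived saturation is preserved under cones'' are invoked cleanly.
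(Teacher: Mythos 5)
Your proposal is correct and takes essentially the same route as the paper's proof: both hinge on the cohomology of $M \to I^\bullet$ being torsion (so that $\rT$ turns it into a quasi-isomorphism) together with Theorem~\ref{thm:perfect} (each $I^k$ derived saturated) to conclude $\rR\bS(M)\simeq I^\bullet$. The paper phrases the second half in terms of $\rS$-acyclic resolutions ($\rT(I^\bullet)$ is a resolution of $\rT(M)$ by $\rS$-acyclics, so $\rR\bS(M)=\rS(\rT(I^\bullet))=I^\bullet$), whereas you use the thick-subcategory / stupid-truncation argument to show $I^\bullet\to\rR\bS(I^\bullet)$ is a quasi-isomorphism; these are just two presentations of the same fact.
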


\begin{proof}
Clearly, $\rT(I^{\bullet})$ is a resolution of $\rT(M)$ (if $N$ is a torsion module then  $\rT(N) = 0$). By Theorem~\ref{thm:perfect}, $\rT(I^k)$ is an $\rS$-acyclic for each $k$. Thus $\rR \bS(M) = \rS(\rT(I^{\bullet})) =\rR \bS(I^{\bullet})$.  The remaining assertions now follow immediately from the properties of the complex $I^{\bullet}$.
\end{proof}

%

The above corollary shows that the complex $I^{\bullet}$ in Theorem~\ref{thm:semires} is well-defined up to quasi-isomorphism and depends functorially on $M$ in the derived category.  For a non-negatively graded $\bk$-module $M$, we define {\bf max-degree}, denoted $\maxdeg M$, to be the smallest $n \ge -1$ such that $M_k = 0$ for all $k >n$. We denote $\maxdeg \rR^i\bS(M)_n$  by $s^i(M)$.

%

\begin{theorem}
\label{thm:homology-bound}
 We have $s^i(M) \le 2 \delta(M) -2 i$ for $i \ge 1$ and, $\delta(\bS(M)) = \delta(M)$. Moreover, the complex $M \to I^{\bullet}$ in Theorem~\ref{thm:semires} is exact in $\FI$ degrees $\ge g(M)+\max(g(M), r(M))$.
\end{theorem}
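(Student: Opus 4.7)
The strategy is to reinterpret everything via the identification $\rR\bS(M) \simeq I^{\bullet}$ from Corollary~\ref{cor:derived-saturation}: the first claim translates into bounds on $\maxdeg \rR^i\bS(M)$ and on $\delta(\bS M)$, while the moreover part becomes a statement about simultaneous vanishing of $M_{\tors}$, $\coker(M \to \bS M)$, and the $\rR^i\bS M$ in a suitable range of FI-degrees. Since $\rT$, and hence $\rR\bS$, annihilates torsion, I may reduce to torsion-free $M$ whenever I am bounding the $\rR^i\bS M$.

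For the first claim I would induct on $d = \delta(M)$. Using Theorem~\ref{thm:shift}, choose $n$ large so that $\Sigma^n M$ is semi-induced, giving a short exact sequence $0 \to M \to \Sigma^n M \to N \to 0$ with $\delta(N) = d - 1$ by property (e). Theorem~\ref{thm:perfect} says $\Sigma^n M$ is derived saturated, so the long exact sequence of $\rR\bS$ collapses to $\rR^i\bS M \cong \rR^{i-1}\bS N$ for $i \ge 2$ together with a four-term sequence
\begin{displaymath}
0 \to \bS M \to \Sigma^n M \to \bS N \to \rR^1 \bS M \to 0.
\end{displaymath}
The sandwich $M \subseteq \bS M \subseteq \Sigma^n M$ yields $\delta(\bS M) = d$, and the bound $s^i(M) \le 2d - 2i$ for $i \ge 2$ drops out of the inductive hypothesis applied to $N$.

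The main obstacle is the $i = 1$ case, where one must bound $\rR^1\bS M \cong \coker(N \to \bS N) = H^1_{\tors}(N)$, a local cohomology group not directly controlled by the $\rR^j\bS$ bounds on $N$. I would address this by strengthening the inductive hypothesis to simultaneously track $\maxdeg H^j_{\tors}(X)$ for all $j \ge 0$ and all $X$ of degree $\le d$. The long exact sequence in local cohomology for $0 \to M \to \Sigma^n M \to N \to 0$ gives isomorphisms $H^{j+1}_{\tors}(M) \cong H^j_{\tors}(N)$, since the semi-induced (hence torsion-free and derived saturated) module $\Sigma^n M$ has no local cohomology; bounds of the shape $\maxdeg H^j_{\tors}(X) \le 2\delta(X) - 2j + 2$ then propagate cleanly, and specializing to $j \ge 2$ recovers the desired $s^i(M) \le 2d - 2i$.

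For the moreover part, it suffices to produce $K \le g(M) + \max(g(M), r(M))$ such that $\Sigma^K M$ is semi-induced, since shifting commutes with $\rR\bS$: one then obtains $\Sigma^K \bS M = \bS \Sigma^K M = \Sigma^K M$ and $\Sigma^K \rR^i\bS M = \rR^i\bS \Sigma^K M = 0$ for $i \ge 1$, which combined with the torsion-freeness of $\Sigma^K M$ forces $M = \bS M$, $M_{\tors} = 0$, and all $\rR^i\bS M = 0$ in FI-degrees $\ge K$---exactly the exactness of $M \to I^{\bullet}$ in that range. The bound on $K$ follows from a two-term induced presentation $F_1 \to F_0 \to M \to 0$ with $g(F_0) \le g$ and $g(F_1) \le r$: a suitable shift of $\ker(F_0 \to M)$ (controlled by $\max(g,r)$) is semi-induced, and then Proposition~\ref{prop:short-exact-sequence-semi-induced} applied to the shifted sequence $0 \to \Sigma^K \ker(F_0 \to M) \to \Sigma^K F_0 \to \Sigma^K M \to 0$ promotes $\Sigma^K M$ itself to be semi-induced, with the extra shift by $g$ accounting for the room needed to reach the semi-induced regime.
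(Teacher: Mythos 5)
Your reformulation $\rR^i\bS(M) \cong H^{i+1}_{\tors}(M)$, the isomorphisms $H^{j+1}_{\tors}(M) \cong H^j_{\tors}(N)$ coming from a short exact sequence $0 \to M \to \Sigma^n M \to N \to 0$ with $\Sigma^n M$ semi-induced, and the translation of the ``moreover'' part into the statement that $\Sigma^K M$ is derived saturated (hence semi-induced) for $K = g(M) + \max(g(M), r(M))$, are all correct reductions. But the induction does not close. The proposed strengthened hypothesis $\maxdeg H^j_{\tors}(X) \le 2\delta(X) - 2j + 2$ is \emph{false} at $j = 0$: a finitely generated torsion $\FI$-module $X$ has $\delta(X) = -1$ but $H^0_{\tors}(X) = X$ can be supported in arbitrarily high $\FI$-degree. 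The reduction chain $H^j_{\tors}(M) \cong H^{j-1}_{\tors}(N_1) \cong \cdots \cong H^0_{\tors}(N_j) = (N_j)_{\tors}$ therefore bottoms out at a torsion submodule whose max-degree cannot be controlled by $\delta$ alone; you need the generator and relation degrees of the modules that appear. That is precisely the content of the external input the paper invokes, \cite[Theorem~F]{castelnuovo-regularity}, which bounds the max-degree of the torsion submodule of $\coker(I^{i-1} \to I^i)$ in terms of the generation degrees of the semi-induced modules $I^{i-1}$, $I^i$ (here $\le \delta(M) - i + 1$ and $\le \delta(M)-i$), yielding $s^i(M) \le 2\delta(M) - 2i$ in one step. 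Without such a regularity theorem the base of your induction is unsupported.

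The ``moreover'' part has the same gap in disguise. The assertion that ``a suitable shift of $\ker(F_0 \to M)$ (controlled by $\max(g,r)$) is semi-induced'' is not argued, and producing such a quantitative bound on the shift needed to reach the semi-induced regime is essentially equivalent to the statement you are trying to prove; Theorem~\ref{thm:shift} gives no bound. The paper instead bounds the cohomology of $M \to I^{\bullet}$ at $M$ and at $I^0$ directly via \cite[Theorem~F]{castelnuovo-regularity}, obtaining $g(M) + r(M) - 1$ and $\delta(M) + g(M) - 1$ respectively, and then combines these with the $s^i$ bound and $\delta(M) \le g(M)$. Your derived-saturation reformulation is a clean way to state what must be shown, but the quantitative bound still has to come from a regularity estimate, which your proposal does not supply.
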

\begin{proof} 
By Corollary~\ref{cor:derived-saturation}, $\rR \bS(M)$ is quasi-isomorphic to $I^{\bullet}$. Since $\rH^i(I^{\bullet})$ is torsion for $i \ge 1$ (Theorem~\ref{thm:semires}), we see that $s^i(M) = \maxdeg \rH^i(I^{\bullet})$ is bounded by the max-degree of the torsion submodule of $\coker(I^{i-1} \to I^i)$. By \cite[Theorem~F]{castelnuovo-regularity}, we conclude that $s^i(M) \le 2 \delta(M) - 2 i$ for $i \ge 1$. Since the kernel of $M \to \bS(M)$ is torsion, we see that $\delta(\bS(M)) \ge \delta(M)$. On the other hand, we have $\bS(M) = \ker(I^0 \to I^1)$ (Corollary~\ref{cor:derived-saturation}), which implies that $\delta(\bS(M)) \le \delta(I^0) \le \delta(M)$. Thus $\delta(\bS(M)) = \delta(M)$.

For the last assertion, it suffices to bound the max-degrees of cohomologies of the complex $M \to I^{\bullet}$ at $M$ and $I^0$ (the rest have already been taken care of by the previous paragraph). By \cite[Theorem~F]{castelnuovo-regularity}, these max-degrees are bounded by $ g(M)+r(M) - 1$ and $\delta(M) + g(M) -1$ respectively. The inequality $\delta(M) \le g(M)$ implies that all cohomology groups of the complex $M \to I^{\bullet}$ are supported in ($\FI$) degrees $\le g(M)+\max(g(M), r(M)) -1$. This completes the proof.
\end{proof}

\section{The divided power algebra and its modules}
\label{s:dp}

\subsection{Generalities}

Fix a commutative noetherian ring $\bk$. Let $\bD$ be the divided power algebra  over $\bk$ in a single variable $x$. Thus $\bD$ has a $\bk$-basis consisting of elements $x^{[n]}$ with $n \in \bN$ in which multiplication is given by
\begin{displaymath}
x^{[n]} x^{[m]} = \tbinom{n+m}{n} x^{[n+m]}.
\end{displaymath}
We regard $\bD$ as graded, with $x^{[n]}$ having degree $n$. All $\bD$-modules we consider are graded. The ring $\bD$ is typically not noetherian: for example, if $\bk$ has characteristic $p$ then there is an isomorphism of $\bk$-algebras
\begin{displaymath}
\bk[y_0, y_1, y_2, \ldots]/(y_0^p, y_1^p, y_2^p, \ldots) \stackrel{\sim}{\to} \bD
\end{displaymath}
defined by $y_i \mapsto x^{[p^i]}$. Nonetheless, $\bD$ is always coherent \cite[Theorem~4.1]{dp}, and so finitely presented $\bD$-modules form an abelian category.

For a $\bD$-module $M$, put $\tau_{\ge n}(M)=\bigoplus_{i \ge n} M_i$; this is a $\bD$-submodule of $M$. Following \cite[\S 9]{dp}, we say that a $\bD$-module $M$ is {\bf nearly finitely presented} if there exists a finitely presented $\bD$-module $N$, called a {\bf weak fp-envelope} of $M$, and an isomorphism of $\bD$-modules $\tau_{\ge n}(M) \cong \tau_{\ge n}(N)$. We remark that if $\bk$ is $p$-adically complete (e.g., if $\bk$ has characteristic $p$) then weak fp-envelopes are unique up to canonical isomorphism \cite[Proposition~9.5]{dp}. As an example, we note that the ideal $\bD_{+} = (y_0, y_1, \ldots)$ is not finitely generated but it is nearly finitely presented as we have $\tau_{\ge 1} (\bD_{+}) \cong \tau_{\ge 1}(\bD)$.

\begin{remark}
One of the first systematic appearances of divided power algebras appeared in \cite[Chapter III]{roby}. They were later famously used in Grothendieck's theory of crystalline cohomology. A modern treatment can be found in \cite[Tag~09PD]{stacks}. To the best of our knowledge, however, the results in this section do not occur in the literature.
\end{remark}

\subsection{Connections}
\label{sec:connections}

Let $d \colon \bD \to \bD[1]$ be the $\bk$-linear map defined by $x^{[k]} \mapsto x^{[k-1]}$, where we use the convention that $x^{[n]}=0$ if $n<0$. We leave to the reader the simple verification that $d$ is a derivation. A {\bf connection} on a $\bD$-module $M$ is a $\bk$-linear map $\nabla \colon M \to M[1]$ that satisfies the Leibnitz rule $\nabla(fm)=f \nabla(m) + d(f) m$ for all $f \in \bD$ and $m \in M$. The following is a fundamental result about connections:

\begin{proposition}
\label{prop:connection}
Let $M$ be a $\bD$-module equipped a connection $\nabla$. Put $\overline{M}=M/\bD_+ M$, and write $m_0$ for the image of $m \in M$ in $\overline{M}$. Define a map\begin{displaymath}
\Phi \colon M \to \bD \otimes_{\bk} \overline{M}, \qquad
\Phi(m) = \sum_{n \ge 0} x^{[n]} (\nabla^n m)_0.
\end{displaymath}
Then:
\begin{enumerate}
\item $\Phi$ is an isomorphism of $\bD$-modules.
\item Under $\Phi$, the connection $\nabla$ corresponds to $d \otimes 1$.
\item The map $\Phi$ identifies $\ker(\nabla)$ with $\overline{M}$.
\item The natural map $\ker(\nabla) \otimes_{\bk} \bD \to M$ is an isomorphism of $\bD$-modules.
\end{enumerate}
\end{proposition}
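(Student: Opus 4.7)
The plan is to establish (a) directly by constructing an explicit $\bD$-linear inverse to $\Phi$, after which (b), (c), (d) follow with little additional work. First I would verify that $\Phi$ is $\bD$-linear and intertwines $\nabla$ with $d \otimes 1$. Iterating Leibniz gives $\nabla^n(x^{[k]} m) = \sum_{i=0}^{n} \binom{n}{i} d^i(x^{[k]}) \nabla^{n-i}(m)$, and since $d^i(x^{[k]}) = x^{[k-i]}$ lies in $\bD_+$ for $i < k$ and vanishes for $i > k$, projection to $\overline{M}$ isolates the single term $(\nabla^n(x^{[k]} m))_0 = \binom{n}{k} (\nabla^{n-k} m)_0$. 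Combined with $x^{[k]} x^{[j]} = \binom{j+k}{k} x^{[j+k]}$, this gives $\Phi(x^{[k]} m) = x^{[k]} \Phi(m)$ termwise, proving $\bD$-linearity; the intertwining property (b) is then an immediate reindexing.

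The central step is to produce a $\bk$-linear section $\sigma \colon \overline{M} \to K := \ker(\nabla)$ of the composite $K \hookrightarrow M \twoheadrightarrow \overline{M}$. I would set $\tau(m) = \sum_{i \ge 0} (-1)^i x^{[i]} \nabla^i(m)$, a well-defined finite sum by degree reasons. A telescoping Leibniz computation yields $\nabla \tau(m) = 0$, so $\tau$ takes values in $K$. To descend $\tau$ to $\overline{M}$, I would check $\tau(x^{[a]} m) = 0$ for $a \ge 1$; after expanding with Leibniz and collecting, the coefficient of $x^{[\ell+a]} \nabla^\ell(m)$ reduces to $\sum_{j=0}^{a} (-1)^j \binom{a}{j} = 0$. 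Thus $\tau$ descends to $\sigma \colon \overline{M} \to K$; since $\tau(k) = k$ for $k \in K$, $\sigma$ is a section, and so $K \to \overline{M}$ is an isomorphism.

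With $\sigma$ in hand, define $\Psi \colon \bD \otimes_{\bk} \overline{M} \to M$ by $\Psi(f \otimes \overline{m}) = f \cdot \sigma(\overline{m})$. Since $\sigma(\overline{m}) \in K$ implies $\Phi(\sigma(\overline{m})) = 1 \otimes \overline{m}$, the $\bD$-linearity of $\Phi$ yields $\Phi \Psi = \mathrm{id}$ directly. For the other composition, set $\mu = \Psi \Phi$; this is $\bD$-linear, idempotent (from $\Phi \Psi = \mathrm{id}$), commutes with $\nabla$ (from the intertwining property), and restricts to the identity on $K$. An induction on grading degree shows $\mu = \mathrm{id}$: the base case $M_0 \subset K$ is clear, and for $m \in M_k$ with $k \ge 1$ the inductive hypothesis gives $\nabla(\mu m - m) = \mu(\nabla m) - \nabla m = 0$, so $\mu m - m \in K$; idempotence yields $\mu(\mu m - m) = 0$, while $\mu|_K = \mathrm{id}$ yields $\mu(\mu m - m) = \mu m - m$, forcing $\mu m = m$. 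This establishes (a); (c) follows because $\Phi$ restricts to an isomorphism $K \to \ker(d \otimes 1) \cong \overline{M}$, and (d) because $\Phi$ carries the natural map $K \otimes_{\bk} \bD \to M$ to the isomorphism induced by $\sigma$. The main obstacle is the combinatorial identity underlying the descent of $\tau$ to $\overline{M}$; once that is in place, the idempotence trick at the end is short and formal.
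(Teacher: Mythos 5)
Your proof is correct, and it takes a genuinely different route from the paper. The paper establishes (a) by first showing $\Phi$ is $\bD$-linear (via the Leibniz identity for $\nabla^n$ together with Taylor's theorem $f=\sum_i x^{[i]} d^i(f)_0$), then observing that $\Phi_0=\mathrm{id}_{\overline M}$, and finally invoking a graded Nakayama-type lemma (a map of $\bD$-modules to a relatively free module which is an isomorphism mod $\bD_+$ is an isomorphism); parts (b)--(d) then fall out. You instead build an explicit two-sided inverse: the operator $\tau(m)=\sum_{i\ge 0}(-1)^i x^{[i]}\nabla^i(m)$ is a $\bk$-linear projection onto $\ker\nabla$ that kills $\bD_+ M$, hence descends to $\sigma\colon \overline M\to\ker\nabla$, and $\Psi(f\otimes\overline m)=f\sigma(\overline m)$ is then shown to invert $\Phi$ via the idempotence-plus-induction argument. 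Your combinatorial descent step is fine: after the change of index the coefficient factors as $\binom{\ell+j}{j}\binom{\ell+a}{\ell+j}=\binom{\ell+a}{\ell}\binom{a}{j}$, reducing to $\sum_{j=0}^a(-1)^j\binom{a}{j}=0$ for $a\ge 1$. One small expository slip: you say ``$\sigma$ is a section'' both in the sense of a section of $q\colon K\to\overline M$ (i.e.\ $q\sigma=\mathrm{id}$, which follows since $\tau(m)\equiv m\pmod{\bD_+M}$) and in the sense $\sigma q=\mathrm{id}$ (your observation $\tau(k)=k$ for $k\in K$); both are true and both are needed to conclude $q$ is an isomorphism, but the wording conflates them. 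The upshot of the comparison: the paper's argument is shorter and leans on a soft Nakayama principle, while yours is longer but furnishes an explicit, formula-level inverse, which some readers may find more transparent and which would survive in contexts where one wants the inverse in hand.
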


\begin{proof}
(a) Let $m \in M$ and $f \in \bD$. We have the identity
\begin{displaymath}
\nabla^n(fm) = \sum_{i+j=n} \binom{n}{i} d^i(f) \nabla^j(m).
\end{displaymath}
Thus
\begin{displaymath}
\begin{split}
\Phi(fm)
&= \sum_{n \ge 0} \sum_{i+j=n} \binom{n}{i} x^{[n]} d^i(f)_0 \nabla^j(m)_0 \\
&= \left( \sum_{i \ge 0} x^{[i]} d^i(f)_0 \right) \left( \sum_{j \ge 0} x^{[j]} \nabla^j(m)_0 \right) \\
&= f \Phi(m).
\end{split}
\end{displaymath}
In the last step we used Taylor's theorem: for any $f \in \bD$ we have
\begin{displaymath}
f = \sum_{i \ge 0} x^{[i]} d^i(f)_0.
\end{displaymath}
We have thus show that $\Phi$ is $\bD$-linear. It is clear that $\Phi_0$ is the identity map $\overline{M} \to \overline{M}$, and so $\Phi$ is an isomorphism by the following lemma (basically Nakayama's).

(b) We have
\begin{displaymath}
(d \otimes 1) \Phi(m) = \sum_{n \ge 1} x^{[n-1]} (\nabla^n m)_0
= \sum_{n \ge 0} x^{[n]} (\nabla^{n+1} m)_0 = \Phi(\nabla m).
\end{displaymath}

(c) By part~(b), $\Phi$ induces an isomorphism $\ker(\nabla) \to \ker(d \otimes 1) = 1 \otimes \overline{M}$.

(d) The map in question is just the inverse to $\Phi$, under the identification $\ker(\nabla)=\overline{M}$.
\end{proof}

\begin{lemma}
Let $f \colon M \to N$ be a map of $\bD$-modules.
\begin{enumerate}
\item Suppose the map $\overline{f} \colon \overline{M} \to \overline{N}$ is surjective. Then $f$ is surjective.
\item Suppose $\overline{f}$ is an isomorphism and $N$ has the form $\bD \otimes_{\bk} N'$ for some $\bk$-module $N'$. Then $f$ is an isomorphism.
\end{enumerate}
\end{lemma}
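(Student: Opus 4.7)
The plan is to prove (a) first and then deduce (b). For \textbf{(a)}, set $C = \coker(f)$. Right-exactness of $- \otimes_{\bD} \bk$ gives $C/\bD_+ C = \coker(\overline{f}) = 0$, so $C = \bD_+ C$. Since the $\bD$-modules in question are bounded below (in particular nonnegatively graded, as is standard throughout the paper), a graded Nakayama argument applies: if $n$ were minimal with $C_n \neq 0$, then $C_n \subseteq (\bD_+ C)_n = \sum_{i \geq 1} \bD_i \cdot C_{n-i} = 0$, a contradiction. Hence $C = 0$ and $f$ is surjective.

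For \textbf{(b)}, part (a) shows $f$ is surjective; setting $K = \ker(f)$, I must show $K = 0$. By the same graded Nakayama argument, this reduces to showing $K \otimes_{\bD} \bk = 0$. The short exact sequence $0 \to K \to M \to N \to 0$ yields the $\Tor$ long exact sequence
\[
\Tor_1^{\bD}(\bk, N) \to K \otimes_{\bD} \bk \to M \otimes_{\bD} \bk \xrightarrow{\overline{f}} N \otimes_{\bD} \bk \to 0.
\]
Since $\overline{f}$ is an isomorphism by hypothesis, it suffices to establish the vanishing $\Tor_1^{\bD}(\bk, N) = 0$.

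The key computation is that $\Tor_i^{\bD}(\bk, \bD \otimes_{\bk} N') = 0$ for $i \geq 1$. The crucial observation is that $\bD$ is free as a $\bk$-module (with basis $\{x^{[n]}\}_{n \geq 0}$), and in particular flat over $\bk$. Choosing a free $\bk$-resolution $P_\bullet \to N'$, applying the exact functor $\bD \otimes_{\bk} -$ produces a resolution $\bD \otimes_{\bk} P_\bullet \to N$ by free (hence flat) $\bD$-modules. Applying $\bk \otimes_{\bD} -$ to this resolution recovers the complex $P_\bullet$, whose higher homology, which computes $\Tor_{\geq 1}^{\bD}(\bk, N)$, vanishes.

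I do not anticipate a genuine obstacle: both parts reduce to standard graded Nakayama and $\Tor$ arguments. The only nontrivial ingredient is the flatness of $\bD$ over $\bk$, which is immediate from the explicit basis, and the observation that the hypothesis $N \cong \bD \otimes_{\bk} N'$ is used precisely to ensure this $\Tor$ vanishing — it would fail for a general $\bD$-module $N$, which is why part (b) requires the extra assumption.
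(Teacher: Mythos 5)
Your proof is correct and takes essentially the same route as the paper: part (a) is the graded Nakayama argument (the paper phrases it as an explicit induction on degree constructing a preimage, which is the same thing), and part (b) likewise passes to the kernel, reduces mod $\bD_+$, and invokes $\Tor_1^{\bD}(\bk,N)=0$. The one genuine addition is that you spell out why $\Tor_1^{\bD}(\bk,\bD\otimes_{\bk}N')$ vanishes via flatness of $\bD$ over $\bk$, a fact the paper uses without comment.
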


\begin{proof}
(a) We proceed by induction on degree. Thus let $n \in N$ have degree $d$, and suppose surjectivity holds in smaller degree. We can write $n_0=f_0(m_0)$ for some $m \in M$, and thus $n-f(m) \in \bD_+ N$, and so $n-f(m)=\sum_{k \ge 1} x^{[k]} n_k$ for some $n_k \in N$. Since then $n_k$ have strictly smaller degree than $n$, we have $n_k=f(m_k)$ for some $m_k \in M$, by induction. Thus $n-f(m)=f(m')$, where $m'=\sum_{k \ge 1} x^{[k]} m_k$, and so $n=f(m+m')$.

(b) Consider the exact sequence
\begin{displaymath}
0 \to K \to M \to N \to 0
\end{displaymath}
Reducing mod $\bD_+$, and using the fact that $\Tor^{\bD}_1(N, \bk)=0$, gives
\begin{displaymath}
0 \to \overline{K} \to \overline{M} \to \overline{N} \to 0.
\end{displaymath}
Since $f_0$ is an isomorphism, we find $\overline{K}=0$, and so $K=0$ (use part~(a) with the map $0 \to K$). This proves the lemma.
\end{proof}

Suppose now that $\bk$ has characteristic $p$ and let $q$ be a power of $p$. Let $d_q \colon \bD \to \bD[q]$ be the $q$-fold iterate of $d$, i.e., $d_q(x^{[k]})=x^{[k-q]}$. Then $d_q$ is a derivation, since any $q$-fold iteration of a derivation in characteristic $p$ is still a derivation. A {\bf $q$-connection} on a $\bD$-module $M$ is a $\bk$-linear map $\nabla \colon M \to M[q]$ satisfying the Leibnitz rule with respect to $d_q$. For example, if $\nabla \colon M \to M[1]$ is a connection then the $q$-fold iterate of $\nabla$ is a $q$-connection. Let $\bD^{(q)} = \bigoplus_{q \mid n} \bD_n$, a subalgebra of $\bD$. There is an algebra isomorphism $\bD^{(q)} \to \bD$ mapping $x^{[qn]}$ to $x^{[n]}$, under which $d_q$ corresponds to $d$. Thus Proposition~\ref{prop:connection} yields:

\begin{proposition}
\label{prop:q-connection}
Let $M$ be a $\bD$-module admitting a $q$-connection. Then there is an isomorphism of $\bD^{(q)}$-modules $M \cong \bD^{(q)} \otimes_{\bk} M'$ for some graded $\bk$-module $M'$.
\end{proposition}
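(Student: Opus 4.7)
The plan is to reduce to the case $q=1$, which is Proposition~\ref{prop:connection}, by means of the algebra isomorphism $\varphi \colon \bD^{(q)} \to \bD$ sending $x^{[qn]} \mapsto x^{[n]}$, under which $d_q$ corresponds to $d$. Restriction of scalars along $\varphi^{-1}$ then turns connections into $q$-connections, so one would expect the two propositions to be essentially the same statement after this identification; the only real bookkeeping is that $\varphi$ is not a graded isomorphism with respect to the ambient grading inherited from $\bD$, but only after the grading on $\bD^{(q)}$ is ``rescaled'' by a factor of $1/q$.

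To handle the grading carefully, I would first decompose $M = \bigoplus_{r=0}^{q-1} M^{(r)}$, where $M^{(r)} = \bigoplus_{n} M_{qn+r}$. Since every element of $\bD^{(q)}$ has degree divisible by $q$, each $M^{(r)}$ is a graded $\bD^{(q)}$-submodule of $M$; and since $\nabla$ shifts degree by $q$, it restricts to a map $\nabla \colon M^{(r)} \to M^{(r)}[q]$ which continues to satisfy the Leibniz rule with respect to $d_q$.

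Next, I would transport each $M^{(r)}$ across $\varphi$. Concretely, define a graded $\bD$-module $N^{(r)}$ with underlying $\bk$-module $N^{(r)}_n = M_{qn+r}$ and $\bD$-action obtained by letting $x^{[n]}$ act as $x^{[qn]}$. Under this identification the restriction of $\nabla$ to $M^{(r)}$ becomes a genuine connection $\nabla \colon N^{(r)} \to N^{(r)}[1]$, because $d_q$ on $\bD^{(q)}$ is intertwined with $d$ on $\bD$ by $\varphi$. Proposition~\ref{prop:connection} then produces a graded $\bk$-module $(M')^{(r)}$ and an isomorphism $N^{(r)} \cong \bD \otimes_{\bk} (M')^{(r)}$ of graded $\bD$-modules. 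Transporting back through $\varphi$ yields an isomorphism $M^{(r)} \cong \bD^{(q)} \otimes_{\bk} (M')^{(r)}$ of graded $\bD^{(q)}$-modules, after shifting the $\bk$-grading on $(M')^{(r)}$ by $r$.

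Finally, summing over $r$ and setting $M' = \bigoplus_{r=0}^{q-1} (M')^{(r)}[r]$ gives the desired isomorphism $M \cong \bD^{(q)} \otimes_{\bk} M'$ of graded $\bD^{(q)}$-modules. There is no real obstacle here; the entire content lies in Proposition~\ref{prop:connection}, and the ``hardest'' step is merely the grading bookkeeping sketched above, which is the reason for separating out the residues modulo $q$ before invoking the case $q=1$.
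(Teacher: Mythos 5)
Your proposal is correct and takes essentially the same approach as the paper, which simply observes the graded-algebra isomorphism $\bD^{(q)} \cong \bD$ sending $x^{[qn]} \mapsto x^{[n]}$ intertwines $d_q$ with $d$ and then invokes Proposition~\ref{prop:connection}. Your decomposition $M = \bigoplus_{r=0}^{q-1} M^{(r)}$ by degree modulo $q$ spells out the grading bookkeeping that the paper leaves implicit (since $\varphi$ rescales degrees by $1/q$), but the substance is the same reduction to the $q=1$ case.
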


\subsection{Periodicity phenomena}
\label{sec:dp-positive}

We assume in this section that $\bk$ is a field of characteristic $p$. It is not difficult to see (and we will prove) that if $M$ is a finitely presented $\bD$-module then the dimension of $M_n$ is eventually periodic in $n$ with period a power of $p$. The main purpose of this section is to introduce invariants $\epsilon$ and $\lambda$ that control the period and the onset of periodicity, and prove a few facts about them.

We first introduce some notation. Put $y_i=x^{[p^i]}$, and recall that $\bD$ is isomorphic to $\bk[y_i]_{i \ge 0}/(y_i^p)$. For a subset $I$ of $\bN$, let $\bD_I$ be the subalgebra of $\bD$ generated by the $y_i$ with $i \in I$. We also put $\bD_{<r}=\bD_{[0,r)}$ and $\bD_{\ge r}=\bD_{[r,\infty)}$, where we use the usual interval notation for subsets of $\bN$. We note here that $\bD_{I \sqcup J} \cong  \bD_{I } \otimes_{\bk} \bD_{ J}$.

\begin{proposition}
\label{d:free}
Let $f \colon M_1 \to M_2$ be a map of finitely generated free $\bD_{\ge r}$-modules. Suppose that, in a suitable basis, the matrix entries of $f$ only involve the variables $y_r, \ldots, y_{s-1}$, for some $s \ge r$. Then the kernel, cokernel, and image of $f$ are free as $\bD_{\ge s}$-modules.
\end{proposition}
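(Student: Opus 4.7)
The plan is to use the tensor decomposition $\bD_{\ge r} \cong \bD_{[r,s)} \otimes_{\bk} \bD_{\ge s}$ (a special case of $\bD_{I \sqcup J} \cong \bD_I \otimes_\bk \bD_J$ noted just before the statement) to factor $f$ through a map of modules over the finite-dimensional $\bk$-algebra $\bD_{[r,s)}$, and then exploit that $\bk$ is a field to propagate freeness.

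Using the hypothesis, pick bases to identify $M_1 \cong \bD_{\ge r}^a$ and $M_2 \cong \bD_{\ge r}^b$ so that $f$ is given by a homogeneous matrix $F$ whose entries lie in $\bD_{[r,s)}$. Set $\bar{M}_1 = \bD_{[r,s)}^a$ and $\bar{M}_2 = \bD_{[r,s)}^b$, and let $\bar{f} \colon \bar{M}_1 \to \bar{M}_2$ be the map of graded free $\bD_{[r,s)}$-modules specified by the same matrix $F$. Under the decomposition $\bD_{\ge r} = \bD_{[r,s)} \otimes_\bk \bD_{\ge s}$ we get identifications of graded $\bD_{\ge r}$-modules $M_i \cong \bar{M}_i \otimes_\bk \bD_{\ge s}$ with $f$ carried to $\bar{f} \otimes \id_{\bD_{\ge s}}$.

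Since $\bk$ is a field, $\bD_{\ge s}$ is free (in particular flat) as a $\bk$-module, so the functor $-\otimes_\bk \bD_{\ge s}$ is exact. Applied to the three-term sequences defining kernel, image, and cokernel of $\bar{f}$, this yields
\[
\ker(f) \cong \ker(\bar{f}) \otimes_\bk \bD_{\ge s}, \qquad \operatorname{im}(f) \cong \operatorname{im}(\bar{f}) \otimes_\bk \bD_{\ge s}, \qquad \coker(f) \cong \coker(\bar{f}) \otimes_\bk \bD_{\ge s}.
\]
For any graded $\bk$-vector space $V$ the graded $\bD_{\ge s}$-module $V \otimes_\bk \bD_{\ge s}$ is free: a homogeneous $\bk$-basis $\{v_\alpha\}$ of $V$ gives a $\bD_{\ge s}$-basis $\{v_\alpha \otimes 1\}$. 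Applying this with $V$ equal to $\ker(\bar{f})$, $\operatorname{im}(\bar{f})$, and $\coker(\bar{f})$, viewed merely as graded $\bk$-vector spaces (forgetting the $\bD_{[r,s)}$-structure), gives the three desired freeness statements.

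No step is genuinely hard: the entire content lies in spotting the tensor decomposition of $\bD_{\ge r}$ forced by the hypothesis on the matrix entries, together with the trivial observation that over a field every module is free. The latter is also exactly why the statement sits in this section and cannot be lifted verbatim to a general noetherian $\bk$.
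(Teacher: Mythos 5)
Your proof is correct and is essentially the paper's proof: both lift $f$ to a map $\bar f$ of free $\bD_{[r,s)}$-modules using the same matrix, observe that $f = \bar f \otimes_\bk \mathrm{id}_{\bD_{\ge s}}$ via the decomposition $\bD_{\ge r} \cong \bD_{[r,s)} \otimes_\bk \bD_{\ge s}$, and use exactness of $-\otimes_\bk \bD_{\ge s}$ together with the triviality that $V \otimes_\bk \bD_{\ge s}$ is $\bD_{\ge s}$-free for any graded $\bk$-vector space $V$. You merely spell out the final freeness step that the paper leaves implicit.
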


\begin{proof}
Let $\ol{M}_i$ be a free $\bD_{[r,s)}$-module with the same basis as $M_i$, and define $\ol{f} \colon \ol{M}_1 \to \ol{M}_2$ using the same matrix that defines $f$. Then $f$ is obtained from $\ol{f}$ by applying the exact functor $-\otimes_{\bk} \bD_{\ge s}$. Thus $\ker(f)=\ker(\ol{f}) \otimes_{\bk} \bD_{\ge s}$ is free over $\bD_{\ge s}$, and similarly for the cokernel and image.
\end{proof}

\begin{corollary}
Let $M$ be a finitely presented $\bD$-module. Then $M$ is free as a $\bD_{\ge r}$-module for some $r$.
\end{corollary}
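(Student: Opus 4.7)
The plan is to reduce immediately to Proposition~\ref{d:free} by writing down a finite presentation of $M$ and observing that, after clearing finitely many variables, the defining matrix lives in a finite-dimensional subalgebra. Concretely, I would start from a finite presentation $\bD^a \xrightarrow{f} \bD^b \to M \to 0$, which exists because $M$ is finitely presented. The matrix of $f$ in the standard bases has only finitely many entries, and each entry is a finite $\bk$-linear combination of monomials in the $y_i$. Hence there exists an integer $N$ such that every matrix entry of $f$ lies in $\bD_{<N}$, i.e.\ involves only the variables $y_0,\ldots,y_{N-1}$.

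Next I would view $\bD^a$ and $\bD^b$ as finitely generated free $\bD_{\ge 0}$-modules (noting that $\bD_{\ge 0}=\bD$), so the hypothesis of Proposition~\ref{d:free} is met with $r=0$ and $s=N$. The proposition then says that the cokernel of $f$ is free as a $\bD_{\ge N}$-module. Since that cokernel is $M$, this gives the claim with $r=N$.

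The proof thus has essentially no new content beyond Proposition~\ref{d:free}; the only ``work'' is the observation that a finite presentation has matrix entries bounded in the degree of the variables $y_i$ that appear. There is no real obstacle: finite presentability lets us bundle together the finitely many entries and choose a common cutoff $N$, and the tensor-product decomposition $\bD \cong \bD_{<N}\otimes_\bk \bD_{\ge N}$ used in the proof of Proposition~\ref{d:free} does the rest.
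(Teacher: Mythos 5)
Your proof is correct and matches the paper's own argument, which is simply ``Apply the proposition to a presentation of $M$.'' You have merely spelled out the (routine) observation that a finite presentation matrix has entries involving only finitely many of the variables $y_i$, which is exactly what makes Proposition~\ref{d:free} applicable.
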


\begin{proof}
Apply the proposition to a presentation of $M$.
\end{proof}

\begin{definition}
Let $M$ be a finitely presented $\bD$-module. We define $\epsilon(M)$ to be the minimal non-negative integer $r$ so that $M$ is free as a $\bD_{\ge r}$-module.
\end{definition}

\begin{remark}
If $s \ge r$ then $\bD_{\ge r}$ is free as a $\bD_{\ge s}$-module. It follows that if $M$ is a finitely presented $\bD$-module then $M$ is free over $\bD_{\ge r}$ for any $r \ge \epsilon(M)$.
\end{remark}

Let $M$ be a finitely presented $\bD$-module. For $r \ge \epsilon(M)$, let $g_r(M)$ be the maximal degree of a basis element of $M$ as a $\bD_{\ge r}$-module (this is independent of the choice of basis). Let $s \ge r$. Then $\bD_{\ge r}$ is free as a $\bD_{\ge s}$-module, with basis consisting of the monomials in variables $y_r, \ldots, y_{s-1}$. The monomial $m=y_r^{p-1} \cdots y_{s-1}^{p-1}$ is the one of maximal degree, and has degree $p^s-p^r$. Thus if $e$ is a maximal degree basis element of $M$ as a $\bD_{\ge r}$-module then $m e$ is a maximal degree basis element of $M$ as a $\bD_{\ge s}$-module. It follows that we have the identity
\begin{displaymath}
g_s(M) = p^s-p^r+g_r(M).
\end{displaymath}
This observation is the basis of the following definition.

\begin{definition}
Let $M$ be a finitely presented $\bD$-module. We define $\lambda(M)$ to be the common value of the expression $g_r(M)+1-p^r$, for $r \ge \epsilon(M)$.
\end{definition}

\begin{example}
We have $\epsilon(\bD[d])=0$ and $\lambda(\bD[d])=d$.
\end{example}

The importance of the $\lambda$ and $\epsilon$ invariants comes from the following proposition.

\begin{proposition}
\label{prop:stability-period}
Let $M$ be a finitely presented $\bD$-module. Then $\dim_{\bk}(M_n)$ is periodic in $n$, for $n$ sufficiently large, with period a power of $p$. More precisely, we have $\dim(M_n)=\dim(M_m)$ whenever $n \equiv m \pmod{p^{\epsilon(M)}}$ and $n,m \ge \lambda(M)$.
\end{proposition}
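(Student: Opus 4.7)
The plan is to reduce the statement to a Hilbert series calculation with the algebra $\bD_{\ge r}$, where $r = \epsilon(M)$. By definition of $\epsilon(M)$, the $\bD$-module $M$ is free as a $\bD_{\ge r}$-module; choose a homogeneous basis $e_1, \ldots, e_k$ with $\deg(e_i) = d_i$, so that $d_i \le g_r(M) = \lambda(M) + p^r - 1$. This gives an isomorphism of graded $\bk$-modules $M \cong \bigoplus_{i=1}^k \bD_{\ge r}[-d_i]$, and therefore $\dim_{\bk}(M_n) = \sum_{i=1}^k \dim_{\bk}(\bD_{\ge r})_{n-d_i}$, with the convention that the dimension in negative degrees is $0$.

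Next I would compute the Hilbert series of $\bD_{\ge r}$. Using the isomorphism $\bD_{\ge r} \cong \bk[y_i : i \ge r]/(y_i^p)$ given by $y_i = x^{[p^i]}$ with $\deg(y_i) = p^i$, a $\bk$-basis of $(\bD_{\ge r})_n$ is indexed by tuples $(a_r, a_{r+1}, \ldots)$ with $0 \le a_i \le p-1$ and $\sum_{i \ge r} a_i p^i = n$. By the uniqueness of the base-$p$ expansion, such a tuple exists if and only if $n$ is a non-negative multiple of $p^r$, in which case it is unique. Hence $\dim_{\bk}(\bD_{\ge r})_n$ equals $1$ when $n \ge 0$ and $p^r \mid n$, and $0$ otherwise. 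Combined with the previous paragraph, this gives the clean formula
\[
\dim_{\bk}(M_n) = \#\bigl\{ i : 1 \le i \le k,\ n \ge d_i,\ p^r \mid (n-d_i) \bigr\}.
\]

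To finish, I would show that for $n \ge \lambda(M)$ the inequality $n \ge d_i$ is automatically implied by the divisibility $p^r \mid (n - d_i)$. Indeed, if $p^r$ divides $n - d_i$ but $n < d_i$, then $d_i - n$ is a positive multiple of $p^r$, so $d_i \ge n + p^r \ge \lambda(M) + p^r > g_r(M)$, contradicting the bound $d_i \le g_r(M)$. Thus for $n \ge \lambda(M)$ the formula reduces to $\dim_{\bk}(M_n) = \#\{ i : p^r \mid (n-d_i)\}$, which depends only on the residue of $n$ modulo $p^{\epsilon(M)}$. This yields both the eventual periodicity with period dividing $p^{\epsilon(M)}$ and the sharper claim that $\dim_{\bk}(M_n) = \dim_{\bk}(M_m)$ whenever $n \equiv m \pmod{p^{\epsilon(M)}}$ and $n, m \ge \lambda(M)$.

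The argument is essentially a Hilbert series calculation, so there is no serious obstacle; the one point that must be handled carefully is the numerical bookkeeping showing that the bound $n \ge \lambda(M)$ (rather than the a priori larger bound $n \ge g_r(M)$) suffices to absorb the inequalities $n \ge d_i$ into the divisibility conditions.
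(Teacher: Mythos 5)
Your proof is correct and takes essentially the same approach as the paper: write $M$ as a free $\bD_{\ge r}$-module with $r = \epsilon(M)$, compute $\dim_\bk(M_n)$ via the Hilbert series of $\bD_{\ge r}$, and observe that for $n \ge \lambda(M)$ the truncation constraint $n \ge d_i$ is subsumed by the divisibility condition. The paper phrases the final step slightly differently (computing $\dim(M_m) - \dim(M_n)$ directly and noting the graded piece $V_k$ vanishes for $k \ge q + \lambda(M)$), but this is the same bookkeeping.
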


\begin{proof}
Put $r=\epsilon(M)$ and $q= p^r$ and $a=\lambda(M)$. Write $M=V \otimes \bD_{\ge r}$ where $V$ is a finite dimensional graded vector space concentrated in degree $<q+a$. The degree $n$ piece of $\bD_{\ge r}$ is one dimensional if $n$ is a non-negative multiple of $q$, and 0 otherwise. We thus see that 
\begin{displaymath}
\dim(M_n) = \sum_{\substack{0 \le k \le n \\ k \equiv n \pmod{q}}} \dim(V_k).
\end{displaymath}
Thus if $n \equiv m \pmod{q}$ and $n \le m$ then
\begin{displaymath}
\dim(M_m)-\dim(M_n) = \sum_{\substack{n<k\le m \\ k \equiv n \pmod{q}}} \dim(V_k).
\end{displaymath}
Thus if $n \ge a$ then $k \ge q+a$ for every $k$ appearing in the sum, and so $V_k=0$ for such $k$. We thus find $\dim(M_m)=\dim(M_n)$.
\end{proof}

\begin{remark}
The value $\lambda(M)$ for the onset of periodicity is optimal. However, $p^{\epsilon(M)}$ is not, in general, the minimal period. For example, let $M=\bD_{\ge 1}[0] \oplus \cdots \oplus \bD_{\ge 1}[p-1]$. Then $M_n$ is one-dimensional for all $n$, and so $n \mapsto \dim(M_n)$ has period~1. But $\epsilon(M)=1$.
\end{remark}

We now explain how one can bound $\epsilon$ and $\lambda$ in certain situations.

\begin{proposition}
\label{d:ext}
Let
\begin{displaymath}
0 \to M_1 \to M_2 \to M_3 \to 0
\end{displaymath}
be a short exact sequence of finitely presented $\bD$-modules. Then
\begin{displaymath}
\epsilon(M_2) \le \max(\epsilon(M_1), \epsilon(M_3)), \qquad
\lambda(M_2) \le \max(\lambda(M_1), \lambda(M_3)).
\end{displaymath}
\end{proposition}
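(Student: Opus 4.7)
The plan is to reduce everything to the observation that over $\bD_{\ge r}$ (for large enough $r$) the sequence splits, and then read off both invariants from a direct sum decomposition.

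First, set $r=\max(\epsilon(M_1),\epsilon(M_3))$. By the remark following the definition of $\epsilon$, both $M_1$ and $M_3$ are free as $\bD_{\ge r}$-modules. In particular $M_3$ is projective over $\bD_{\ge r}$, so the short exact sequence, viewed as a sequence of $\bD_{\ge r}$-modules, splits. Hence $M_2 \cong M_1 \oplus M_3$ as $\bD_{\ge r}$-modules, and this is free over $\bD_{\ge r}$. This immediately gives $\epsilon(M_2)\le r=\max(\epsilon(M_1),\epsilon(M_3))$.

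For the $\lambda$ bound, I would use the identity for $g_r$ on a direct sum: taking bases of $M_1$ and $M_3$ over $\bD_{\ge r}$ and combining them gives a $\bD_{\ge r}$-basis of $M_1\oplus M_3 \cong M_2$, so
\[
g_r(M_2)=\max\bigl(g_r(M_1),g_r(M_3)\bigr).
\]
Since $r\ge\epsilon(M_i)$ for $i=1,3$ and also $r\ge\epsilon(M_2)$, the definition of $\lambda$ as the common value of $g_r(-)+1-p^r$ applies to all three modules at the same $r$. Therefore
\[
\lambda(M_2)=g_r(M_2)+1-p^r=\max\bigl(g_r(M_1)+1-p^r,\,g_r(M_3)+1-p^r\bigr)=\max\bigl(\lambda(M_1),\lambda(M_3)\bigr),
\]
which is in fact the desired inequality (with equality, as a bonus).

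There isn't really a hard step here; the only thing to be careful about is that the splitting is taking place in the category of $\bD_{\ge r}$-modules (not $\bD$-modules), which is fine because $\epsilon$ and $\lambda$ are themselves defined in terms of the $\bD_{\ge r}$-module structure for $r \ge \epsilon(M)$. One should also verify that $g_r$ is additive on direct sums in the sense above, but this is immediate from the characterization of $g_r$ as the top degree of any basis, together with the fact that a union of bases is a basis.
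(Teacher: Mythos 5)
Your proof is correct and follows essentially the same route as the paper's: pass to $\bD_{\ge r}$ for $r = \max(\epsilon(M_1),\epsilon(M_3))$, observe that $M_2$ becomes free with a basis obtained by combining a basis of $M_1$ with lifts of a basis of $M_3$, and read off both invariants from $g_r(M_2) = \max(g_r(M_1),g_r(M_3))$. Your observation that $\lambda(M_2)$ actually \emph{equals} $\max(\lambda(M_1),\lambda(M_3))$ is correct and is implicit in the paper's argument as well, even though the statement only asserts an inequality.
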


\begin{proof}
If $M_1$ and $M_3$ are free over $\bD_{\ge r}$ then so is $M_2$; furthermore, a basis for $M_2$ can be obtained from bases of $M_1$ and $M_3$, and so $g_r(M_2)$ is equal to $\max(g_r(M_1), g_r(M_3))$. The proposition follows easily from these observations.
\end{proof}

\begin{proposition}
\label{d:lambda}
Let $M$ be a finitely presented $\bD$-module and let $N$ be a finitely presented subquotient of $M$. Then $\lambda(N) \le \lambda(M)$.
\end{proposition}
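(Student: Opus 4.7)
The plan is to split the subquotient $N$ into two elementary pieces, reducing to the cases of a submodule and of a quotient. Write $N \cong A/B$ for $\bD$-submodules $B \subseteq A \subseteq M$. By lifting a finite set of generators of $N$ through the surjection $A \twoheadrightarrow N$ and replacing $A$ by the $\bD$-submodule of $M$ they generate (and $B$ accordingly), we may arrange that $A$ is finitely generated. Coherence of $\bD$ \cite[Theorem~4.1]{dp} then ensures that $A$ and $M/A$ are both finitely presented. The inequality $\lambda(N) \le \lambda(M)$ will follow from the chain $\lambda(N) = \lambda(A/B) \le \lambda(A) \le \lambda(M)$, where the right inequality is the submodule case for $A \subseteq M$ and the left is the quotient case for $A \twoheadrightarrow N$.

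The common mechanism for both cases is the following. Choose $r$ large enough to dominate $\epsilon(M)$, $\epsilon(A)$, $\epsilon(M/A)$, and $\epsilon(N)$ simultaneously (all finite by the corollary to Proposition~\ref{d:free}); by the remark that follows that corollary, all four modules are then free as $\bD_{\ge r}$-modules. Let $\overline{F} \coloneq F/\bD_{\ge r,+} F$ denote reduction modulo the augmentation ideal of $\bD_{\ge r}$. For a finitely generated free $\bD_{\ge r}$-module $F$, the graded $\bk$-module $\overline{F}$ has nonzero homogeneous pieces in precisely the degrees of any $\bD_{\ge r}$-basis of $F$, so $\maxdeg(\overline{F}) = g_r(F)$. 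Thus, to compare $\lambda$-values of two such free modules, it suffices to compare the $\maxdeg$-values of their reductions.

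For the quotient case, right-exactness of $-\otimes_{\bD_{\ge r}} \bk$ applied to $A \twoheadrightarrow N$ produces a surjection $\overline{A} \twoheadrightarrow \overline{N}$, whence $g_r(N) \le g_r(A)$ and $\lambda(N) \le \lambda(A)$. For the submodule case, applying the same functor to $0 \to A \to M \to M/A \to 0$ preserves exactness because $M/A$ is free, hence flat, over $\bD_{\ge r}$; the resulting injection $\overline{A} \hookrightarrow \overline{M}$ gives $g_r(A) \le g_r(M)$ and so $\lambda(A) \le \lambda(M)$. I expect the main obstacle to be not any single deep step but rather the initial bookkeeping needed to finitely present the intermediate modules $A$ and $M/A$ in the absence of noetherianity; this is precisely where coherence of $\bD$ is indispensable, since it guarantees that the category of finitely presented $\bD$-modules is abelian and closed under the operations we need.
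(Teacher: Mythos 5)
Your proposal is correct and takes essentially the same approach as the paper: both use the identification $g_r(F)=\maxdeg(F/I_r F)$ for $F$ free over $\bD_{\ge r}$, right-exactness of reduction mod $I_r$ for the quotient case, and freeness (hence flatness) of $M/N$ over $\bD_{\ge r}$ for the submodule case. You merely make explicit two points the paper leaves implicit — the reduction of a general subquotient to a submodule followed by a quotient, and the use of coherence to arrange that the intermediate module is finitely presented so that its $\epsilon$ and $\lambda$ invariants are defined.
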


\begin{proof}
Let $I_r$ be the maximal ideal of $\bD_{\ge r}$, i.e., $(y_r, y_{r+1}, \ldots)$. Note that if $M$ is free as a $\bD_{\ge r}$-module then $g_r(M)$ is equal to $\maxdeg(M/I_rM)$, where we write $\maxdeg(V)$ for the maximal degree occurring in a graded vector space $V$. Suppose that $N$ is a quotient of $M$. Then for any $r \ge 0$, we have a surjection $M/I_r M \to N/I_r N$, and so
\begin{displaymath}
\maxdeg(N/I_r N) \le \maxdeg(M/I_r M).
\end{displaymath}
Thus $g_r(N) \le g_r(M)$ whenever the two sides are defined, and the result follows. Now suppose $N$ is a submodule of $M$. Let $r$ be sufficiently large so that $M/N$ is free as a $\bD_{\ge r}$-module. Then the natural map $N/I_r N \to M/I_r M$ is injective, and so again one has the above inequality, and the result follows.
\end{proof}

\begin{corollary}
\label{d:lambda2}
Let $f \colon M \to N$ be a map of finitely presented $\bD$-modules. Then
\begin{displaymath}
\lambda(\ker(f)) \le \lambda(M), \qquad
\lambda(\im(f)) \le \lambda(M), \qquad
\lambda(\coker(f)) \le \lambda(N).
\end{displaymath}
\end{corollary}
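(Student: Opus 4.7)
The plan is to derive this corollary directly from Proposition~\ref{d:lambda}, which asserts $\lambda(N') \le \lambda(M')$ whenever $N'$ is a finitely presented subquotient of $M'$. The only real point is checking that the three modules $\ker(f)$, $\im(f)$, $\coker(f)$ are themselves finitely presented, so that Proposition~\ref{d:lambda} applies; this uses coherence of $\bD$.

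First I would note that since $\bD$ is coherent (by \cite[Theorem~4.1]{dp}, as recalled in \S\ref{s:dp}), the category of finitely presented $\bD$-modules is abelian. In particular, for any morphism $f \colon M \to N$ of finitely presented $\bD$-modules, the modules $\ker(f)$, $\im(f)$, and $\coker(f)$ are all finitely presented. This is the only non-formal input; it is what permits $\lambda$ to be defined on these modules at all.

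Next I would simply read off the three inequalities:
\begin{itemize}
\item $\ker(f)$ is a submodule of $M$, so Proposition~\ref{d:lambda} (applied to the submodule case) gives $\lambda(\ker(f)) \le \lambda(M)$.
\item $\im(f)$ is a quotient of $M$ (via the canonical surjection $M \twoheadrightarrow \im(f)$), so Proposition~\ref{d:lambda} (quotient case) gives $\lambda(\im(f)) \le \lambda(M)$.
\item $\coker(f)$ is a quotient of $N$, so again Proposition~\ref{d:lambda} gives $\lambda(\coker(f)) \le \lambda(N)$.
\end{itemize}

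There is no real obstacle here: the content has already been absorbed into Proposition~\ref{d:lambda}, and this corollary is just a convenient repackaging for maps. If anything, the only thing worth flagging for the reader is the appeal to coherence to ensure finite presentation of $\ker$, $\im$, $\coker$; this could be folded in by a single sentence rather than a separate argument.
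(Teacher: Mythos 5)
Your proof is correct and is exactly the argument the paper leaves implicit (the corollary is stated without a separate proof, immediately after Proposition~\ref{d:lambda}): identify $\ker(f)$, $\im(f)$, $\coker(f)$ as a submodule of $M$, a quotient of $M$, and a quotient of $N$ respectively, and invoke coherence of $\bD$ to guarantee these are finitely presented so that $\lambda$ is defined and Proposition~\ref{d:lambda} applies.
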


%

For an integer $n$, we let $\ell(n)$ be the smallest non-negative integer $r$ such that $n \le p^r$. Note that $\ell(a+b) \le \max(\ell(a), \ell(b))+1$.

\begin{proposition}
\label{d:epsilon}
Let $f \colon M \to N$ be a map of finitely presented $\bD$-modules supported in non-negative degrees. Then
\begin{displaymath}
\epsilon(\ast) \le \max(\epsilon(M), \epsilon(N), \ell(\lambda(M)))+1,
\end{displaymath}
where $\ast$ is the kernel, cokernel, or image of $f$.
\end{proposition}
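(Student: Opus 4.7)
The plan is to realize $f$ as a map of finitely generated free $\bD_{\ge r}$-modules for the right choice of $r$ and then invoke Proposition~\ref{d:free}. First, since $\bD$ is coherent, $\ker(f)$, $\coker(f)$, and $\im(f)$ are all finitely presented, so their $\epsilon$-invariants are defined.

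Next, I would set $r = \max(\epsilon(M), \epsilon(N), \ell(\lambda(M)))$, so that $M$ and $N$ are both free over $\bD_{\ge r}$ and $p^r \ge \lambda(M)$. Fix homogeneous $\bD_{\ge r}$-bases $\{e_i\}$ of $M$ and $\{f_j\}$ of $N$, of degrees $d_i$ and $D_j$ respectively, and expand $f(e_i) = \sum_j a_{ij} f_j$ with $a_{ij} \in \bD_{\ge r}$. A degree count gives $\deg(a_{ij}) = d_i - D_j$ whenever this is nonzero.

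The central observation is the following degree bound. The non-negative support assumption yields $D_j \ge 0$, while by the definition of $\lambda$ we have $d_i \le g_r(M) = p^r + \lambda(M) - 1$. Hence
\begin{displaymath}
\deg(a_{ij}) \;\le\; g_r(M) \;\le\; 2 p^r - 1 \;<\; p^{r+1}.
\end{displaymath}
Any monomial in $\bD$ containing $y_k$ has degree at least $p^k$, so matrix entries of degree strictly less than $p^{r+1}$ living in $\bD_{\ge r}$ can only involve the single variable $y_r$. This is exactly the hypothesis of Proposition~\ref{d:free} with $s = r + 1$, and its conclusion gives $\epsilon(\ast) \le r + 1$, the claimed bound.

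The main obstacle is really the degree bound on matrix entries. The non-negative support hypothesis is precisely what forces $D_j \ge 0$, producing a bound in terms of $\lambda(M)$ rather than $\lambda(N)$. A small but essential subtlety is the use of the \emph{strict} inequality $g_r(M) < p^{r+1}$: it is this that rules out $y_{r+1}$ appearing in any matrix entry, and it holds precisely because $r \ge \ell(\lambda(M))$, i.e., $\lambda(M) \le p^r$.
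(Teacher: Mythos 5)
Your proof is correct and follows essentially the same strategy as the paper's: fix homogeneous $\bD_{\ge r}$-bases of $M$ and $N$, bound the degrees of the matrix entries of $f$ by $g_r(M)$ using the non-negative support of $N$, observe that $g_r(M) < p^s$ so only finitely many $y_k$ can appear, and then apply Proposition~\ref{d:free}. The only cosmetic difference is your choice $r = \max(\epsilon(M), \epsilon(N), \ell(\lambda(M)))$ versus the paper's $r = \max(\epsilon(M), \epsilon(N))$; both arrive at the same $s$ and the same conclusion.
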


\begin{proof}
Let $r=\max(\epsilon(M), \epsilon(N))$, and let $s$ be the right side of the inequality in the statement of the proposition. Let $\{v_i\}$ and $\{w_j\}$ be bases for $M$ and $N$ as $\bD_{\ge r}$-modules. Then $f(v_i)=\sum_j a_{i,j} w_j$ for some $a_{i,j} \in \bD_{\ge r}$. Since the $w_j$ have non-negative degree, we find
\begin{displaymath}
\deg(a_{i,j}) \le \deg(v_i) < p^r+\lambda(M) \le p^s.
\end{displaymath}
Thus $a_{i,j}$ can only involve the variables $y_r, \ldots, y_{s-1}$, and so the kernel, cokernel, and image of $f$ are free over $\bD_{\ge s}$ by Proposition~\ref{d:free}.
\end{proof}

\begin{corollary}
	Let $I \subset \bD$ be a homogeneous ideal generated in degrees  $\le d$. Then \[ \maxdeg \Tor_1^{\bD}(\bk, I) \le d - 1 + p^{\ell(d) + 1}.\] 
\end{corollary}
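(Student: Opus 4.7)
The plan is to take a minimal-ish free presentation of $I$ and bound the degrees of the syzygies using the $\epsilon$ and $\lambda$ machinery developed above. Choose a surjection $\varphi\colon F_0 \twoheadrightarrow I$ from a finitely generated free $\bD$-module $F_0 = \bigoplus_i \bD[-d_i]$ with $d_i \le d$, and let $K = \ker(\varphi)$. Applying $- \otimes_{\bD} \bk$ to the short exact sequence $0 \to K \to F_0 \to I \to 0$ and using $\Tor_1^{\bD}(\bk, F_0) = 0$, one obtains an inclusion $\Tor_1^{\bD}(\bk, I) \hookrightarrow K/\bD_+ K$, so it suffices to show that $g(K) := \maxdeg(K/\bD_+ K) \le d - 1 + p^{\ell(d)+1}$.

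The crucial move is to apply Proposition~\ref{d:epsilon} not to $\varphi$ itself but to the composition $\psi\colon F_0 \xrightarrow{\varphi} I \hookrightarrow \bD$. Both $F_0$ and $\bD$ are finitely presented, supported in non-negative degrees, and $\ker(\psi) = K$. By inspection $\epsilon(F_0) = 0$, $\lambda(F_0) = \max_i d_i \le d$, and $\epsilon(\bD) = \lambda(\bD) = 0$. Proposition~\ref{d:epsilon} therefore gives
\begin{displaymath}
\epsilon(K) \le \max\bigl(\epsilon(F_0), \epsilon(\bD), \ell(\lambda(F_0))\bigr) + 1 \le \ell(d) + 1,
\end{displaymath}
while Corollary~\ref{d:lambda2} gives $\lambda(K) \le \lambda(F_0) \le d$.

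To conclude, I would observe that a $\bD_{\ge \epsilon(K)}$-basis of $K$ is in particular a generating set of $K$ as a $\bD$-module, so
\begin{displaymath}
g(K) \le g_{\epsilon(K)}(K) = p^{\epsilon(K)} + \lambda(K) - 1 \le p^{\ell(d)+1} + d - 1,
\end{displaymath}
which gives the desired bound on $\maxdeg \Tor_1^{\bD}(\bk, I)$.

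The argument is essentially mechanical once the right map is identified. The only real obstacle is recognizing that Proposition~\ref{d:epsilon} should be applied to the composition $F_0 \to \bD$ rather than the surjection $F_0 \to I$; applying it to $\varphi$ directly would require control of $\epsilon(I)$, which we have no independent handle on, whereas factoring through $\bD$ lets us exploit $\epsilon(\bD) = 0$ for free.
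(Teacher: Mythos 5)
Your proof is correct and follows essentially the same route as the paper: take a free cover $F_0 \twoheadrightarrow I$ generated in degrees $\le d$, identify $\Tor_1^{\bD}(\bk,I)$ with a subspace of $K/\bD_+ K$ for $K$ the kernel, and bound $\maxdeg(K/\bD_+K)$ via $g_{\epsilon(K)}(K)=p^{\epsilon(K)}+\lambda(K)-1$ after controlling $\epsilon(K)$ and $\lambda(K)$. The only cosmetic difference is that you invoke Proposition~\ref{d:epsilon} (together with Corollary~\ref{d:lambda2}) for the bounds, whereas the paper cites Proposition~\ref{d:free} directly; both give the same bounds, and in both cases one must, as you note, work with the composite $F_0\to\bD$ rather than $F_0\to I$ itself.
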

\begin{proof} Let $F \to I$ be a surjection where $F$ is a free $\bD$-module generated in degrees $\le d$. Let $K$ be its kernel. Then we have $\lambda(K) \le d$ and $\epsilon(K) \le \ell(d) + 1$ (Proposition~\ref{d:free}). 
Keeping the notation $I_r$ of the proof of Proposition~\ref{d:lambda}, we see that $\Tor_1^{\bD}(\bk, I)$ is a subspace of $K/(I_0K)$ which, in turn, is a quotient of $K/(I_{\epsilon(K)}K)$. 	This shows that \[\maxdeg \Tor_1^{\bD}(\bk, I)  \le g_{\epsilon(K)} \le d  -1 + p^{\ell(d) + 1},\] completing the proof. 
\end{proof}

\section{Cohomology of $\FI$-modules}
\label{sec:FI-cohomology}

\subsection{The functor $\Gamma$}
\label{sec:gamma-generalities}

Write $\Mod_{\bk}^{\bN}$ for the category of graded $\bk$-modules supported in non-negative degrees. We define a functor
\begin{displaymath}
\Gamma \colon \Rep_{\bk}(S_{\ast}) \to \Mod_{\bk}^{\bN}, \qquad
\Gamma(V)_n = V_n^{S_n}.
\end{displaymath}
This functor is left-exact, and so we can consider its right derived functor $\rR \Gamma$. We write $\Gamma^t$ in place of $\rR^t \Gamma$. Since the functor $\Gamma$ is simply computed pointwise, so is $\Gamma^t$, and thus
\begin{displaymath}
\Gamma^t(V)_n = \rH^t(S_n, V_n).
\end{displaymath} 
One can define a dual version of $\Gamma$ by replacing invariants under the action of symmetric groups by coinvariants, but we don't pursue this here as homology groups can be studied in much greater generality; see \cite{rww}.  We now investigate some of further properties of $\Gamma$.

\begin{proposition}
	\label{prop:restriction-preserves-injectives} Let $\theta \colon \Mod_{\FI} \to \Rep_{\bk}(S_{\ast})$ be the restriction functor.  There is a canonical isomorphism  $\rR^t(\Gamma \circ \theta) \to \rR^t(\Gamma) \circ \theta$.
\end{proposition}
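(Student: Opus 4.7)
The plan is to show that the restriction functor $\theta$ preserves injective objects; once this is established, the claim will follow from a standard injective-resolution argument. The key input is the adjunction from Proposition~\ref{prop:tensor-hom-adjunction}, which identifies $\cI$ as a left adjoint to $\theta$.

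First I would verify that $\cI \colon \Rep_{\bk}(S_{\ast}) \to \Mod_{\FI}$ is an exact functor. Since $\cI$ is defined additively in terms of its values on each $\Rep_{\bk}(S_d)$, it suffices to check this piece by piece. For a $\bk[S_d]$-module $V$, the $n$-th level of $\cI(V)$ is $\Ind_{S_d \times S_{n-d}}^{S_n}(V \boxtimes \mathrm{triv})$. The functor $V \mapsto V \boxtimes \mathrm{triv}$ is exact (it is just $- \otimes_{\bk} \bk$), and induction from $S_d \times S_{n-d}$ to $S_n$ is exact because $\bk[S_n]$ is free as a $\bk[S_d \times S_{n-d}]$-module. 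Hence $\cI$ is exact.

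Next, since $\cI$ is an exact left adjoint of $\theta$, standard abstract nonsense gives that $\theta$ preserves injectives: if $J \in \Mod_{\FI}$ is injective then $\Hom_{S_{\ast}}(-, \theta(J)) = \Hom_{\Mod_{\FI}}(\cI(-), J)$ is the composite of an exact functor with an exact functor, so it is exact, showing $\theta(J)$ is injective in $\Rep_{\bk}(S_{\ast})$.

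Finally, let $M \in \Mod_{\FI}$ and choose an injective resolution $M \to J^{\bullet}$ in $\Mod_{\FI}$. The complex $\theta(J^{\bullet})$ consists of injective objects in $\Rep_{\bk}(S_{\ast})$ by the previous step, and since $\theta$ is exact, $\theta(J^{\bullet})$ is an injective resolution of $\theta(M)$. Therefore
\begin{displaymath}
\rR^t(\Gamma \circ \theta)(M) = \rH^t\bigl((\Gamma \circ \theta)(J^{\bullet})\bigr) = \rH^t\bigl(\Gamma(\theta(J^{\bullet}))\bigr) = \rR^t(\Gamma)(\theta(M)),
\end{displaymath}
and the isomorphism is canonical, being induced by the identity on $\Gamma \circ \theta$ in degree zero. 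The only real step of substance is verifying exactness of $\cI$, which is routine; everything else is abstract nonsense.
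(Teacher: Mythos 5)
Your proof is correct and follows essentially the same route as the paper's: both use the adjunction from Proposition~\ref{prop:tensor-hom-adjunction} to recognize $\theta$ as right adjoint to the exact functor $\cI$, conclude that $\theta$ preserves injectives, and then combine this with exactness of $\theta$ to identify the derived functors. You have simply spelled out the exactness check for $\cI$ in more detail than the paper does.
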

\begin{proof} Since $\theta$ preserves injectives ($\theta$ is right adjoint to an exact functor; see Proposition~\ref{prop:tensor-hom-adjunction}), we have a canonical isomorphism  $\rR(\Gamma \circ \theta) \to \rR(\Gamma) \circ \rR (\theta)$. The result now follows from the exactness of $\theta$. 
\end{proof}

From now on, we shall simply denote the composition $\Gamma \circ \theta$ by $\Gamma$. The previous proposition shows that this notation is reasonable.

\begin{proposition}
	\label{prop:tensor-functor}
The functor $\Gamma$ is naturally a (lax) symmetric monoidal functor, that is, for $S_{\ast}$-representations $V$ and $W$ there is a natural morphism $\Gamma(V) \otimes \Gamma(W) \to \Gamma(V \otimes W)$ satisfying the requisite axioms. If $\bk$ is a field then this morphism is an isomorphism, and so $\Gamma$ is a strict symmetric monoidal functor.
\end{proposition}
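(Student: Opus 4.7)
The plan is to build the structure map $\mu_{V,W}\colon \Gamma(V)\otimes\Gamma(W) \to \Gamma(V\otimes W)$ as the composition of two transparent natural constructions, verify the lax symmetric monoidal axioms, and then show both constituent maps are isomorphisms when $\bk$ is a field. For each decomposition $n = i+j$, the first map is the tautological $\bk$-linear map
\[ V_i^{S_i}\otimes_{\bk} W_j^{S_j} \longrightarrow (V_i\otimes_{\bk} W_j)^{S_i\times S_j}, \qquad v\otimes w\mapsto v\otimes w, \]
which is well-defined because $v\otimes w$ is $S_i\times S_j$-fixed whenever $v$ and $w$ are individually fixed. The second is the averaging map
\[ (V_i\otimes_{\bk} W_j)^{S_i\times S_j} \longrightarrow \bigl(\Ind_{S_i\times S_j}^{S_n}(V_i\otimes W_j)\bigr)^{S_n}, \qquad x\mapsto \sum_{\sigma} \sigma\otimes x, \]
where $\sigma$ runs over a set of coset representatives for $S_n/(S_i\times S_j)$. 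The $S_i\times S_j$-invariance of $x$ makes the sum independent of the choice of representatives, and the result is manifestly $S_n$-invariant. Summing over $i+j = n$ yields $\mu_{V,W}$ in degree $n$.

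Next I verify the lax symmetric monoidal axioms. Naturality in $V$ and $W$ is immediate from the formulas. The unit constraint is trivial because the tensor unit in $\Rep_{\bk}(S_{\ast})$ is concentrated in degree $0$ with value $\bk$, and $\Gamma$ sends it to the corresponding unit in $\Mod_{\bk}^{\bN}$. Symmetry holds because the braiding on $\Rep_{\bk}(S_{\ast})$ is implemented by swapping factors together with the canonical bijection $S_n/(S_i\times S_j) \leftrightarrow S_n/(S_j\times S_i)$, under which the averaging construction is visibly symmetric. Associativity reduces to the observation that the coset set $S_n/(S_i\times S_j\times S_k)$ decomposes compatibly under the induction-in-stages isomorphisms corresponding to the two groupings $(i+j)+k$ and $i+(j+k)$, and that averaging is compatible with these refinements.

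Finally, over a field each of the two constituent maps is an isomorphism, so $\mu_{V,W}$ is too. The first becomes an iso by choosing $\bk$-bases of $V_i$ and $W_j$: an element $\sum c_{\alpha\beta}\,v_\alpha\otimes w_\beta$ of $V_i\otimes_{\bk}W_j$ is $S_i\times S_j$-invariant iff the coefficient tensor is separately invariant in each index, which forces it to lie in $V_i^{S_i}\otimes_{\bk} W_j^{S_j}$. The averaging map is in fact an isomorphism for any noetherian $\bk$ and any finite-index subgroup $H\le G$: a $G$-invariant element $\sum_i g_i\otimes m_i \in \Ind_H^G M$ is determined by its coefficient along the identity coset (acting by $g_j$ shows that in fact all coefficients coincide), and acting by $h\in H$ forces that common coefficient to lie in $M^H$. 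Hence $\mu_{V,W}$ is an isomorphism over a field, establishing that $\Gamma$ is then strict symmetric monoidal. The main obstacle is the careful coset bookkeeping required for the associativity and symmetry axioms; nothing deep is involved, but one must organize the combinatorics of iterated induction cleanly, which is presumably where the mistake in the earlier version lay.
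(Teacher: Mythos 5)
Your proof takes essentially the same route as the paper: both define $\mu_{V,W}$ as the composite of the tautological map $V_i^{S_i}\otimes_{\bk} W_j^{S_j}\to (V_i\otimes_{\bk} W_j)^{S_i\times S_j}$ with the Frobenius reciprocity isomorphism $(V_i\otimes_{\bk} W_j)^{S_i\times S_j}\to (\Ind_{S_i\times S_j}^{S_n}(V_i\otimes W_j))^{S_n}$, and both observe that the second arrow is always an isomorphism while the first becomes one over a field. Your ``averaging map'' is exactly the paper's Frobenius reciprocity isomorphism made explicit, and your extra attention to the unit, symmetry, and associativity axioms only fills in what the paper dispatches with ``clearly respects the symmetries.''
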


\begin{proof}
The asserted morphism is given by the following composite:
\begin{displaymath}
\begin{split}
(\Gamma(V) \otimes \Gamma(W))_n & = \bigoplus_{i+j=n} V_i^{S_i} \otimes W_j^{S_j} \to  \bigoplus_{i+j=n} (V_i \otimes W_j)^{S_i \times S_j} \\ &\to \bigoplus_{i+j=n} (\Ind_{S_i \times S_j}^{S_n}(V_i \otimes W_j))^{S_n} =\Gamma(V \otimes W)_n 
\end{split}
\end{displaymath} where the first arrow is the natural $\bk$-linear map  (which is an isomorphism if $\bk$ is a field), and the second arrow is the isomorphism given by Frobenius reciprocity. This morphism clearly respects the symmetries on both tensor product. This shows that $\Gamma$ is a symmetric monoidal functor. 
\end{proof}


The proposition above implies that $\Gamma$ takes commutative algebras in $\Rep_{\bk}(S_{\ast})$, that is, twisted commutative $\bk$-algebras, to commutative algebras in $\Mod_{\bk}^{\bN}$, that is, commutative graded $\bk$-algebras. In particular, $\Gamma(\bA)$ is a graded commutative $\bk$-algebra. The following proposition identifies it:

\begin{proposition}
The algebra $\Gamma(\bA)$ is the divided power algebra $\bD$.
\end{proposition}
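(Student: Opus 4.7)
The plan is to compute both sides as graded rings and identify them via an explicit generator-by-generator calculation. The underlying graded $\bk$-module of $\Gamma(\bA)$ is easy to identify: since $\bA_n = \bk$ with trivial $S_n$-action, we have $\Gamma(\bA)_n = \bA_n^{S_n} = \bk$, so $\Gamma(\bA)$ is a free $\bk$-module with one generator $e_n$ in each degree $n \ge 0$. The same is true of $\bD$. So the content of the proposition is the identification of the multiplications, which I would deduce by tracing the lax monoidal structure from Proposition~\ref{prop:tensor-functor}.

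First I would write the multiplication on $\Gamma(\bA)$ as the composite
\begin{displaymath}
\Gamma(\bA)_i \otimes_{\bk} \Gamma(\bA)_j \to \Gamma(\bA \otimes \bA)_{i+j} \to \Gamma(\bA)_{i+j},
\end{displaymath}
where the first map is the monoidal constraint built in Proposition~\ref{prop:tensor-functor} and the second is induced by $\bA \otimes \bA \to \bA$. Unwinding the construction of the monoidal constraint, $e_i \otimes e_j$ first lands in $(\bA_i \otimes \bA_j)^{S_i \times S_j} = \bk$ as the element $1$, and then gets sent, via Frobenius reciprocity, to the invariant element of $\Ind_{S_i \times S_j}^{S_{i+j}}(\bA_i \otimes \bA_j) \subset (\bA \otimes \bA)_{i+j}$ given by $\sum_{\sigma} \sigma \otimes (1 \otimes 1)$, where $\sigma$ ranges over a set of coset representatives for $S_{i+j}/(S_i \times S_j)$.

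Next I would apply the multiplication $\bA \otimes \bA \to \bA$. Since this multiplication in each bidegree $(i,j)$ is just the identity $\bk \otimes \bk \to \bk$, each of the $\binom{i+j}{i}$ terms $\sigma \otimes (1 \otimes 1)$ maps to $1 \in \bA_{i+j} = \bk$. Summing, the composite sends $e_i \otimes e_j$ to $\binom{i+j}{i} \cdot e_{i+j}$ in $\Gamma(\bA)_{i+j}$. This is precisely the relation $x^{[i]} x^{[j]} = \binom{i+j}{i} x^{[i+j]}$ that defines the divided power algebra, so the $\bk$-linear isomorphism $e_n \mapsto x^{[n]}$ is an isomorphism of graded $\bk$-algebras.

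The only mildly subtle step is the bookkeeping of Frobenius reciprocity, specifically verifying that the invariant element in $(\Ind_{S_i \times S_j}^{S_{i+j}}(\bk))^{S_{i+j}}$ corresponding to $1 \in \bk^{S_i \times S_j}$ is $\sum_\sigma \sigma \otimes 1$; this is standard. Once that is in hand, the binomial coefficient $\binom{i+j}{i} = |S_{i+j}/(S_i \times S_j)|$ appears automatically, which is the entire content of the identification with $\bD$.
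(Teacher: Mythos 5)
Your proof is correct and takes essentially the same approach as the paper: both identify the underlying graded module of $\Gamma(\bA)$ as $\bk$ in each degree and then unwind the lax monoidal structure of $\Gamma$ applied to the multiplication $\bA \otimes \bA \to \bA$, recognizing the binomial coefficient $\binom{i+j}{i}$ as a coset count. You spell out the Frobenius reciprocity step (the invariant element $\sum_\sigma \sigma \otimes 1$) more explicitly than the paper, which simply asserts that $\Gamma(\Psi^i_n)$ is multiplication by $\binom{n}{i}$; but the underlying computation is identical.
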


\begin{proof}
The multiplication map $\Gamma(\bA) \otimes \Gamma(\bA) \to \Gamma(\bA)$ is the composite $ \Gamma(\bA) \otimes \Gamma(\bA) \to \Gamma(\bA \otimes \bA) \to \Gamma(\bA)$. Denote the multiplication map $\bA \otimes \bA \to \bA$ in degree $n$ by $\bigoplus_{i \le n} \Psi^i_n$. Here $\Psi_{n}^i \colon \Ind_{S_{i} \times S_{n-i}}^{S_{n}} \bA_i \otimes_{\bk} \bA_{n-i} \to \bA_{n}$ is the map induced by the multiplication map $\bA_i \otimes_{\bk} \bA_{n-i} \to \Res^{S_n}_{S_{i} \times S_{n-i}} \bA_{n}$ taking $a \otimes b$ to $ab$. Clearly, the images of the source and the target of $\Psi_n^i$ under $\Gamma$ can be naturally identified with $\bk$ and $\Gamma(\Psi_n^i)$ is the multiplication by the binomial coefficient $\binom{n}{i}$ under this identification. This shows that $\Gamma(\bA)$ is the divided power algebra $\bD$.  
\end{proof}

\begin{proposition}
\label{prop:composite-map}
Let $M$ be an $\FI$-module, and let $\Gamma^t(M) = \bigoplus_{n \ge 0} \rH^t(S_n, M_n)$ be the corresponding $\Gamma(\bA)$-module. Then the action of $x^{[m-n]} \in \Gamma(\bA)_{m-n}$ on $\Gamma^t(M)$ is given by the composition $\psi_{n,m}$ of the following maps:
\begin{displaymath}
\rH^t(S_n, M_n) \to \rH^t(S_n \times S_{m-n}, M_n) \to \rH^t(S_n \times S_{m-n}, M_m) \to \rH^t(S_m, M_m).
\end{displaymath}
The first map is pull-back along the group homomorphism $S_n \times S_{m-n} \to S_n$. The second map is induced by the $\FI$-module transition map $M_n \to M_m$. Finally, the last map is corestriction.
\end{proposition}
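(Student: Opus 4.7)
The plan is to unwind the definition of the $\bD$-module structure on $\Gamma^t(M)$ and match each constituent map with the three arrows in the claimed composite. By construction, the $\Gamma(\bA)$-module structure on $\Gamma^t(M)$ is built out of two ingredients: the lax symmetric monoidal structure of Proposition~\ref{prop:tensor-functor}, extended to the derived setting as
\begin{displaymath}
\mu_{V,W} \colon \Gamma(V) \otimes \Gamma^t(W) \longrightarrow \Gamma^t(V \otimes W),
\end{displaymath}
and the functor $\Gamma^t$ applied to the multiplication map $\bA \otimes M \to M$. Thus the action of $x^{[m-n]}$ on $\xi \in \Gamma^t(M)_n = \rH^t(S_n, M_n)$ is the image of $x^{[m-n]} \otimes \xi$ under the composite $\Gamma^t(\bA \otimes M)_m \to \Gamma^t(M)_m$ following $\mu_{\bA, M}$.

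First I would restrict attention to the summand of $(\bA \otimes M)_m$ coming from $\bA_{m-n} \otimes_\bk M_n$, which contributes the term $\Ind_{S_{m-n}\times S_n}^{S_m}(\bk \otimes M_n)$. Tracing the construction in the proof of Proposition~\ref{prop:tensor-functor}, the $(m-n,n)$-component of $\mu_{\bA,M}$ factors as the external product map
\begin{displaymath}
\rH^0(S_{m-n},\bk) \otimes \rH^t(S_n,M_n) \longrightarrow \rH^t(S_{m-n}\times S_n,\, \bk \otimes M_n)
\end{displaymath}
followed by the (derived) Shapiro isomorphism $\rH^t(S_{m-n}\times S_n, M_n) \cong \rH^t(S_m, \Ind M_n)$. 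Since $x^{[m-n]}$ is the class of $1 \in \bk = \bA_{m-n}$, its external product with $\xi$ is precisely the pullback of $\xi$ along the projection $S_{m-n}\times S_n \to S_n$. This identifies the first arrow.

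Next I would recall that for a subgroup $H \le G$ of finite index and an $H$-module $W$, Shapiro's isomorphism $\rH^t(H,W) \xrightarrow{\sim} \rH^t(G,\Ind_H^G W)$ is the composite of the inclusion $W \hookrightarrow \Res \Ind_H^G W$ (as the ``identity coset'' summand) followed by corestriction $\rH^t(H, \Res\Ind_H^G W) \to \rH^t(G,\Ind_H^G W)$. Applying this with $H=S_{m-n}\times S_n$, $G=S_m$, and $W = M_n$, the Shapiro map is the corestriction of the image of $\xi$ in $\rH^t(S_{m-n}\times S_n,\Res\Ind M_n)$. Finally, the multiplication $\bA \otimes M \to M$ restricted to the $(m-n,n)$ summand is, by Frobenius reciprocity, induced from the transition map $M_n \to \Res^{S_m}_{S_{m-n}\times S_n} M_m$. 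Precomposing the Shapiro isomorphism with the induced map $\Ind M_n \to M_m$ therefore collapses the inclusion-into-induction-then-project-to-$M_m$ step into simply the transition map, giving the composite
\begin{displaymath}
\rH^t(S_{m-n}\times S_n, M_n) \xrightarrow{M_n \to M_m} \rH^t(S_{m-n}\times S_n, M_m) \xrightarrow{\cor} \rH^t(S_m, M_m),
\end{displaymath}
where I use naturality of corestriction in the coefficients to pull the transition past the transfer. Concatenating the three identifications yields the claimed formula for $\psi_{n,m}$.

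The main obstacle is the bookkeeping around the derived Shapiro isomorphism: one must verify that it admits the inclusion--then--corestriction description on cohomology (so that the interaction with the Frobenius-adjoint of the $\bA$-action produces the desired transition map sandwiched between pullback and corestriction), and that the lax monoidal map $\mu$ truly derives to the cross product in group cohomology. Both facts are standard but are where all the care is needed; once these identifications are in place, the remaining step is just naturality of corestriction in the coefficient module, which is routine.
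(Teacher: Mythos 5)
Your argument is correct, but it takes a genuinely different route from the paper. The paper first observes that both the claimed composite $\psi_{n,m}$ and the $x^{[m-n]}$-action on $\Gamma^t(M)$ are natural in $M$ and compatible with the $\delta$-functor structure of $\Gamma^{\bullet}$ (pullback, transition, and corestriction all commute with connecting maps, and the $\bD$-action is obtained by deriving the $\bA$-module multiplication); it then reduces at once to $t=0$, where the identity follows from a short direct calculation on $S_m$-invariants using the explicit monoidal structure map and Frobenius reciprocity. You instead keep $t$ general and unwind the derived lax monoidal map using the external product, the identification of the first arrow as pullback from $\rH^0(S_{m-n},\bk)\otimes\rH^t(S_n,M_n)$, the factorization of Shapiro's isomorphism as unit-of-adjunction followed by corestriction, and naturality of corestriction in coefficients to commute the $\bA$-module structure map past the transfer. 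That chain of identifications does close correctly, as you note (the composite $M_n \to \Res\Ind M_n \to \Res M_m$ is exactly the transition map by the adjunction triangle), so your plan yields a complete proof. The trade-off: your route avoids the $\delta$-functor reduction and is thus more explicit about what happens at each $t$, but it requires invoking and verifying two derived-level compatibilities (derived Shapiro as inclusion-then-$\cor$, and the derived monoidal map realizing the cross product) that the paper's reduction to $t=0$ sidesteps entirely. If you wanted to write up your version in detail, those two lemmas would need to be stated; the paper's reduction step is the more economical way to bypass that bookkeeping.
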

\begin{proof} Since all the maps in the composite are functorial in $M$, we may assume that $t=0$. Since $\Gamma$ is a symmetric monoidal functor, the multiplication $\Gamma(\bA)_{m-n} \otimes \Gamma(M)_n \to \Gamma(M)_m$ is given by the composite map $ \Gamma(\bA)_{m-n} \otimes \Gamma(M)_n \to \Gamma(\bA \otimes M)_n \to \Gamma(M)_n$. Thus the action of $x^{[m-n]}$ is given by the map $\Psi^{S_m}$ where $\Psi \colon \Ind_{S_{m-n} \times S_n}^{S_m} \bk \otimes_{\bk} M_n \to M_m$ is the natural map induced by the $\bA$-module structure on $M$. Clearly, $(\Ind_{S_{m-n} \times S_n}^{S_m} \bk \otimes_{\bk} M_n)^{S_m}$ and $(M_m)^{S_m}$ are naturally isomorphic to  $\Gamma(M)_m$ and $\Gamma(M)_n$ respectively, and $(\Psi)^{S_m}$ is the composite map in the assertion under this isomorphism. This completes the proof.
\end{proof}

\subsection{A connection on the cohomology of the symmetric group}
\label{sec:induced-connection}
In this section we reformulate a result due to Dold in terms of connections and generalize it to induced $\FI$-modules.  More precisely, we prove the following:

%
%
%
%


\begin{proposition}\label{prop:induced-connection}
Let $V$ be an object of $\Rep_{\bk}(S_{\ast})$ and denote the $\bD$-module $\Gamma^t(\cI(V))$ by $M$. Then $M$ admits a connection $\nabla \colon M \to M[1]$. Moreover, if $V_n = 0$ for $n > d$ then we have the following: \begin{enumerate}
\item $\ker \nabla_n =0 $ for $n> 2t + d$.
\item $M \cong M_0 \otimes_{\bk} \bD$ for a graded $\bk$-module $M_0$ supported in degrees $\le 2 t +d$.
\end{enumerate}
\end{proposition}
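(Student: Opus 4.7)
The plan is to build $\nabla$ from the classical restriction-to-$S_{n-1}$ map combined with a projection onto a canonical direct summand, verify the Leibniz rule via a Mackey computation, and deduce parts (a) and (b) by combining Proposition~\ref{prop:connection} with Nakaoka's theorem applied summand-by-summand.

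To construct $\nabla_n\colon M_n \to M_{n-1}$, I would start from the identity $\Sigma(\cI(V)) = \cI(V) \oplus \cI(\Sigma V)$ recalled in \S\ref{ss:induced}. Evaluating at $n-1$ yields an $S_{n-1}$-equivariant decomposition $\cI(V)_n\vert_{S_{n-1}} = \cI(V)_{n-1} \oplus \cI(\Sigma V)_{n-1}$, which is what Mackey's formula produces on each summand $\mathrm{Ind}_{S_i \times S_{n-i}}^{S_n}(V_i \boxtimes \mathrm{triv})$. I then define $\nabla_n$ as the composite of restriction $\rH^t(S_n, \cI(V)_n) \to \rH^t(S_{n-1}, \cI(V)_n\vert_{S_{n-1}})$ with the projection onto the $\cI(V)_{n-1}$ summand. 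These assemble into a graded $\bk$-linear map $\nabla \colon M \to M[1]$.

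To verify the Leibniz identity $\nabla(x^{[k]} m) = x^{[k-1]} m + x^{[k]} \nabla(m)$ for $m \in M_n$, I use Proposition~\ref{prop:composite-map}, which expresses $x^{[k]} \cdot (-)$ as the composite of pullback along $S_n \times S_k \hookrightarrow S_n$, $\FI$-transition, and corestriction to $S_{n+k}$. The left-hand side then involves the composite $\mathrm{res}^{S_{n+k}}_{S_{n+k-1}} \circ \mathrm{cor}^{S_{n+k}}_{S_n \times S_k}$, which the Mackey double-coset formula splits into two terms indexed by the two $S_n\times S_k$-orbits on $\{1,\ldots,n+k\}$ (equivalently, the two double cosets in $S_{n+k-1}\backslash S_{n+k}/(S_n\times S_k)$). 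Matching these terms against the decomposition of $\cI(V)_{n+k}\vert_{S_{n+k-1}}$ used to define $\nabla$, one orbit (``$n+k$ in the $S_k$-block'') yields corestriction from $S_n \times S_{k-1}$ precomposed with $\nabla$, i.e.\ $x^{[k]} \nabla(m)$, while the other (``$n+k$ in the $S_n$-block'') yields multiplication by $x^{[k-1]}$, i.e.\ $d(x^{[k]})\, m$.

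For (a), I reduce by additivity to the case where $V$ is concentrated in a single degree $d$. Shapiro's lemma then identifies $\Gamma^t(\cI V)_n$ with $\rH^t(S_d \times S_{n-d}, V_d \boxtimes \mathrm{triv})$, and one checks that under this identification $\nabla_n$ is the restriction along $S_d \times S_{n-d-1} \hookrightarrow S_d \times S_{n-d}$ (it is the other Mackey summand that gets projected away). Since this inclusion is the identity on the $S_d$-factor, the K\"unneth/Hochschild--Serre machinery for a product of groups reduces the question to the classical restriction $\rH^q(S_{n-d}, \bk) \to \rH^q(S_{n-d-1}, \bk)$ being an isomorphism for every $q \le t$, which Nakaoka's theorem grants whenever $n - d > 2t$. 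Hence $\nabla_n$ is an isomorphism, and in particular injective, for $n > 2t + d$. Part~(b) then follows at once from Proposition~\ref{prop:connection}(d) with $M_0 := \ker(\nabla)$, which by (a) is supported in degrees $\le 2t + d$. The main obstacle will be the Leibniz verification: one must carry out the Mackey double-coset computation while simultaneously tracking the Shapiro isomorphism and the summand projection $\cI(V)_n\vert_{S_{n-1}} \twoheadrightarrow \cI(V)_{n-1}$ used to define $\nabla$; everything else is essentially bookkeeping plus Nakaoka and Proposition~\ref{prop:connection}.
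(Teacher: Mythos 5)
Your strategy and the paper's are the same at bottom — the Leibniz rule ultimately comes from the Mackey double-coset formula applied to $\mathrm{res}\circ\mathrm{cor}$, which is precisely what Dold's Lemma~\ref{lem:dold:simple} encapsulates, and parts~(a) and~(b) are deduced identically (Shapiro, K\"unneth, Nakaoka, Proposition~\ref{prop:connection}). Where you differ is the organization, and the difference matters. The paper first proves Proposition~\ref{prop:shapiro}, that the Shapiro isomorphism $\Gamma^t(\cI(V)) \cong \Gamma^t_{S_d}(\cI(V'))[d]$ is a map of $\bD$-modules, precisely so that the construction and Leibniz verification can be done on the $S_d$-equivariant side $\Gamma^t_{S_d}(\cI(V'))$, where the coefficient module $V'$ is the same for all $n$ and $\nabla$ is literally restriction with no projection. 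In that setting Dold's lemma applies verbatim. You instead define $\nabla$ directly on $\Gamma^t(\cI(V))$ as restriction followed by projection onto the $\cI(V)_{n-1}$ summand, and plan to verify Leibniz by a single Mackey computation. That forces you to track, simultaneously, the two Mackey terms, the $\FI$-transition map $\cI(V)_n\to\cI(V)_{n+k}$, and the compatibility of the chosen summand projections $\cI(V)_m\vert_{S_{m-1}}\twoheadrightarrow\cI(V)_{m-1}$ at the various stages $m=n$ and $m=n+k$ — exactly what Proposition~\ref{prop:shapiro} exists to hide. So your plan effectively folds the proofs of Propositions~\ref{prop:shapiro} and~\ref{prop:connection-on-J} into one computation; it should go through, but it is noticeably messier and you have explicitly deferred the hard part. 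One small slip worth flagging: you have the two orbits exchanged — with the conventions of Proposition~\ref{prop:composite-map}, the double coset in which the new point lands in the $S_n$-block contributes $x^{[k]}\nabla(m)$ (corestriction from $S_{n-1}\times S_k$), while the one where it lands in the $S_k$-block contributes $d(x^{[k]})\,m = x^{[k-1]}m$ (corestriction from $S_n\times S_{k-1}$), not the other way around.
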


The rest of \S \ref{sec:induced-connection} is devoted to proving this proposition. For this, we introduce some additional objects. Let $\Rep_{\bk}(S_{\ast})^G$ be the category of $G$-equivariant objects of $\Rep_{\bk}(S_{\ast})$: an object is a sequence $(M_n)_{n \ge 0}$ where $M_n$ is a $\bk[S_n \times G]$-module. For $M \in \Rep_{\bk}(S_{\ast})^G$, the module $\Gamma(M)$ has an action of $G$, and we put $\Gamma_G(M)=\Gamma(M)^G$. We let $\Gamma^t_G$ be the $t$-th  right derived functor of $\Gamma^t$. Thus for $M \in \Rep_{\bk}(S_{\ast})^G$ we have
\begin{displaymath}
\Gamma^t_G(M)_n = \rH^t(S_n \times G, M_n).
\end{displaymath}
Let $\Mod_{\bA}^G$ denote the category of $G$-equivariant $\bA$-modules, using the trivial action of $G$ on $\bA$.  For $M \in \Mod_{\bA}^G$, the $\bD$-module $\Gamma^t(M)$ has an action of $G$, and so $\Gamma^t_G(M)$ is also a $\bD$-module. The following result is a direct analog of Proposition~\ref{prop:composite-map}.

\begin{proposition}
\label{prop:Theta-D}
Let $M$  be an $G$-equivariant $\bA$-module. Then the action of $x^{[m-n]} \in \bD$ on $\Gamma^t_G(M)_n$ is given by the composition $\phi_{n,m}$ of the following maps:
{\small
\begin{displaymath}
\rH^t(S_n \times G, M_n) \to \rH^t(S_n \times S_{m-n} \times G, M_n) \to \rH^t(S_n \times S_{m-n} \times G, M_m) \to \rH^t(S_m \times G, M_m).
\end{displaymath}
}%
The first map is pull-back along the group homomorphism $S_n \times S_{m-n} \times G \to S_n \times G$. The second map is induced by the $\FI$-module transition map $M_n \to M_m$. Finally, the last map is corestriction.
\end{proposition}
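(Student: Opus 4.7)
The plan is to mimic the proof of Proposition~\ref{prop:composite-map} essentially verbatim, replacing the group $S_m$ by $S_m \times G$ throughout. First, observe that each of the four arrows appearing in the composite $\phi_{n,m}$ (pull-back, coefficient map, corestriction) is natural in the $G$-equivariant $\bA$-module $M$, and the $\bD$-action on $\Gamma^t_G(M)$ is also natural in $M$. Both are, moreover, obtained by applying the derived functors of the corresponding $t=0$ constructions, so they are determined by what they do on an injective resolution. Consequently, it suffices to treat the case $t=0$, as in the proof of Proposition~\ref{prop:composite-map}.

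Next, I would note that the functor $(-)^G \colon \Rep_\bk(G) \to \Mod_\bk$ is lax symmetric monoidal (the canonical map $V^G \otimes W^G \to (V \otimes W)^G$ does the job), and $\Gamma$ is lax symmetric monoidal by Proposition~\ref{prop:tensor-functor}. Their composition $\Gamma_G = (-)^G \circ \Gamma$ is therefore lax symmetric monoidal as well. Since $G$ acts trivially on $\bA$, we have the canonical identification $\Gamma_G(\bA) = \Gamma(\bA) = \bD$, and the $\bD$-module structure on $\Gamma_G(M)$ is obtained by applying $\Gamma_G$ to the $\bA$-module structure map $\bA \otimes M \to M$ and precomposing with the lax structure morphism $\Gamma_G(\bA) \otimes \Gamma_G(M) \to \Gamma_G(\bA \otimes M)$.

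Unpacking, the action of $x^{[m-n]} \in \bD_{m-n}$ on $\Gamma_G(M)_n$ is given by $\Psi^{S_m \times G}$, where
\begin{displaymath}
\Psi \colon \Ind_{S_{m-n} \times S_n}^{S_m} \bk \otimes_\bk M_n \to M_m
\end{displaymath}
is induced by the $\bA$-module structure on $M$. Frobenius reciprocity then identifies $\bigl(\Ind_{S_{m-n} \times S_n}^{S_m} \bk \otimes_\bk M_n\bigr)^{S_m \times G}$ with $M_n^{S_n \times G} = \Gamma_G(M)_n$ and $M_m^{S_m \times G}$ with $\Gamma_G(M)_m$. Tracking these identifications, $\Psi^{S_m \times G}$ factors precisely as pull-back along $S_n \times S_{m-n} \times G \to S_n \times G$, followed by the map on group cohomology induced by $M_n \to M_m$, followed by corestriction to $S_m \times G$. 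This matches the composite $\phi_{n,m}$ stated in the proposition.

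The only nontrivial step is the verification that the lax monoidal structure of $\Gamma_G$ really does produce the same three-step composite; this is already what was done in the proof of Proposition~\ref{prop:composite-map}, and the extra $G$ factor commutes with everything and appears only as a passive tensor factor, so no new difficulty arises. The reduction to $t=0$ is straightforward from the fact that $G$-equivariant injectives in $\Mod_\bA^G$ restrict to $S_\ast \times G$-injective objects (by the same adjunction argument as Proposition~\ref{prop:restriction-preserves-injectives}), so there is no subtlety in identifying the derived version of each map with the derived functor of the $t=0$ map.
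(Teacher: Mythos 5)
Your proposal is correct and takes the same approach as the paper: the paper gives no separate proof of Proposition~\ref{prop:Theta-D}, stating only that it is ``a direct analog'' of Proposition~\ref{prop:composite-map}, and your argument simply carries out that analogy explicitly (with the helpful added observation that $\Gamma_G=(-)^G\circ\Gamma$ inherits lax symmetric monoidality, so $G$ rides along as a passive tensor factor).
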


The following lemma of Dold is crucial for our argument.

\begin{lemma}
\label{lem:dold:simple}
Let $M$ be a $\bk[S_{n+m} \times G]$-module. Then in the below diagram, the map defined by the middle path is the sum of the ones defined by the top and the bottom paths.
{\tiny
\begin{displaymath}
\begin{tikzcd}
 \rH^{t}(S_n \times S_{m -1} \times S_1 \times G, M) \ar[bend left=15, rightarrow]{rrd}{\cor} \\
 \rH^{t}(S_n \times S_{m} \times G, M) \ar[rightarrow]{r}{\cor}  \ar[rightarrow, swap]{u}{\res}  \ar[rightarrow]{d}{\res} & \rH^{t}(S_{n+m} \times G, M) \ar[rightarrow]{r}{\res} & \rH^{t}(S_{n+m-1}\times S_1 \times G, M)  \\
\rH^{t}(S_{n-1} \times S_1 \times S_{m} \times G, M)  \ar[rightarrow]{r}{\zeta^{\star}}  & \rH^{t}(S_{n-1} \times S_m \times S_1 \times G, M) \ar[bend right=15, rightarrow]{ru}{\cor} \\
\end{tikzcd}
\end{displaymath}
}%
Here $\zeta$ is the permutation defined by
\begin{displaymath}
(\zeta(1), \ldots, \zeta(n+m)) = (1,2, \ldots, n-1, n+m, n, n+1, \ldots, n+m-1),
\end{displaymath}
and $\res$ and $\cor$ are restriction and corestriction respectively.
\end{lemma}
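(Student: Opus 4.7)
This is a version of the Mackey double coset formula for the composite $\res \circ \cor$, specialized to the subgroups $H = S_n \times S_m$ and $K = S_{n+m-1} \times S_1$ inside $G' = S_{n+m}$. Since the factor $G$ appears as a direct summand in all of the groups in the diagram, and all the maps are $G$-equivariant, $G$ can be carried along trivially; the content of the lemma is entirely about the three subgroups $H$, $K$, $G'$ of $S_{n+m}$. Hence my strategy is: invoke the Mackey formula
\[
\res_K^{G'} \circ \cor_H^{G'} \;=\; \sum_{g \,\in\, K \backslash G' / H} \cor_{K \cap gHg^{-1}}^{K} \circ c_g \circ \res_{g^{-1}Kg \cap H}^{H},
\]
classify the double cosets explicitly, and check that the two resulting summands are precisely the top and bottom paths.

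The classification of double cosets is straightforward: using the bijection $G'/H \leftrightarrow \binom{[n+m]}{n}$ sending $gH$ to $g(\{1,\dots,n\})$, the action of $K = \mathrm{Stab}(n+m)$ on $n$-subsets splits into two orbits according to whether $n+m$ belongs to the subset or not. Thus there are exactly two double cosets, represented by $e$ and by any element $\zeta \in G'$ with $\zeta(\{1,\dots,n\}) = \{1,\dots,n-1,n+m\}$; the cycle given in the statement is one such choice.

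For the identity double coset I would compute $K \cap H = S_n \times S_{m-1} \times S_1$, where $S_{m-1}$ acts on $\{n+1,\dots,n+m-1\}$ and $S_1$ on $\{n+m\}$; the Mackey contribution is then $\cor \circ \res$, which is exactly the top path of the diagram. For the second double coset, using the representative $\zeta$, one computes $\zeta H \zeta^{-1} = S_{\{1,\dots,n-1,n+m\}} \times S_{\{n,\dots,n+m-1\}}$, whence
\[
K \cap \zeta H \zeta^{-1} = S_{n-1} \times S_m \times S_1
\quad\text{and}\quad
\zeta^{-1} K \zeta \cap H = S_{n-1} \times S_1 \times S_m,
\]
the two $S_1$ factors sitting on $\{n+m\}$ and $\{n\}$ respectively. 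The conjugation map $c_\zeta$ induces exactly the isomorphism $\zeta^{\star}\colon \rH^t(S_{n-1}\times S_1 \times S_m \times G, M) \to \rH^t(S_{n-1}\times S_m \times S_1 \times G, M)$ appearing in the diagram, and the Mackey contribution becomes the bottom path.

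The only real subtlety will be bookkeeping: making sure the permutation $\zeta$ correctly identifies the two ``$S_1$ factors'' under conjugation, and that signs/actions of $\zeta$ on coefficients (which are automatic here because $\zeta$ is inner in $S_{n+m}$ acting on a genuine $S_{n+m}$-module $M$) are handled correctly. No deep input is required beyond the Mackey formula; once the double coset decomposition is written down, the two contributions match the two diagonal paths by inspection.
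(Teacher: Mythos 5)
Your proposal is correct, but the route is genuinely different from the paper's. The paper reduces the statement to a module-theoretic one: it picks a $\bk[S_{n+m}\times G]$-free resolution $P$ of $\bk$, forms the cochain complex $A = \Hom_{\bk[G]}(P,M)$ with its induced $S_{n+m}$-action, notes that $\rH^t(H\times G,M) = \rH^t(A^H)$ for every subgroup $H\le S_{n+m}$, and then invokes Dold's Lemma~1 (dual version) from \emph{Decomposition theorems for $S(n)$-complexes}, which is a chain-level decomposition statement for complexes with a symmetric-group action. You instead go straight to the Mackey double-coset formula for $\res\circ\cor$ in group cohomology, identify the two double cosets in $(S_{n+m-1}\times S_1)\backslash S_{n+m}/(S_n\times S_m)$ via the model $S_{n+m}/(S_n\times S_m)\leftrightarrow\binom{[n+m]}{n}$, and match each summand to one of the two diagonal paths. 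The underlying combinatorics (Mackey/double cosets) is the same in both approaches --- indeed Dold's Lemma~1 is essentially a chain-level Mackey computation for $S_n$-complexes --- but your version is more self-contained: it does not require the $\rH^t(A^H)$ reformulation or the external reference to Dold's specific lemma, at the modest cost of carrying the $G$-factor explicitly through the double-coset bookkeeping (which, as you note, is trivial since $G$ splits off as a direct factor). Your handling of the representatives is correct: $\zeta(\{1,\dots,n\})=\{1,\dots,n-1,n+m\}$, so $\zeta^{-1}K\zeta\cap H = S_{n-1}\times S_1\times S_m$ and $K\cap\zeta H\zeta^{-1}=S_{n-1}\times S_m\times S_1$, and $c_\zeta$ is exactly the map denoted $\zeta^\star$ in the diagram.
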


\begin{proof}
Let $P$ be a $\bk[S_{n+m} \times G]$-free resolution of $\bk$. Then $P$ is an $H$-free resolution for every subgroup $H$ of $S_{n+m} \times G$. To keep Dold's (\cite{D}) notation, we denote the cochain complex $\Hom_{\bk[G]}(P, M)$ by $A$. There is an action of $S_{n+m}$ on $A$ given by $(\sigma f)(p) = \sigma (f(\sigma^{-1}p))$, and for any subgroup $H$ of $S_{n+m}$, we have $\rH^{t}(H \times G, M) = \rH^t(A^{H})$. With this observation in mind, the assertion follows by applying \cite[Lemma~1, dual version]{D} to this $A$.
\end{proof}

\begin{proposition} \label{prop:connection-on-J}
Let $V$ be a $\bk[G]$-module, regarded as an object of $\Rep_{\bk}(S_{\ast})^G$ concentrated in degree~0. The restriction map
\begin{displaymath}
\rH^t(S_n \times G,  V) \to \rH^t(S_{n-1} \times G,  V)
\end{displaymath}
defines a connection $\nabla$ on the $\bD$-module $\Gamma_G^t(\cI(V))$. (Note here that $\cI(V)_n$ is the $\bk[S_n \times G]$-module $V$ with $S_n$ acting trivially.) 
\end{proposition}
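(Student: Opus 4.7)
The plan is to show that the restriction map $\nabla$ satisfies the Leibnitz rule with respect to the derivation $d$ on $\bD$, by directly applying Dold's Lemma~\ref{lem:dold:simple}. I begin by observing that $\cI(V)_n = V$ with trivial $S_n$-action, so $\Gamma_G^t(\cI(V))_n = \rH^t(S_n \times G, V)$, and the restriction $\res$ from $S_n$ to $S_{n-1}$ is a $\bk$-linear map lowering degree by one, matching the format $\nabla \colon M \to M[1]$ of a connection. By $\bk$-linearity of every map involved, it suffices to verify the Leibnitz rule $\nabla(f\alpha) = f \nabla(\alpha) + d(f) \alpha$ for $f$ ranging over the basis $\{x^{[k]}\}_{k \ge 0}$ of $\bD$ and arbitrary $\alpha \in \rH^t(S_n \times G, V)$. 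The case $k = 0$ is immediate since $d(1) = 0$, so the content lies in $k \ge 1$.

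Next I would invoke Proposition~\ref{prop:Theta-D} to describe the $\bD$-action explicitly. Since $V$ is concentrated in $\FI$-degree $0$, the transition maps of $\cI(V)$ are identities, and the action of $x^{[k]}$ on $\alpha$ is thus the composite $\phi_{n, n+k}(\alpha) = \cor \circ \mathrm{pb}(\alpha)$, where $\mathrm{pb}$ is pull-back along $S_n \times S_k \times G \to S_n \times G$ and $\cor$ is corestriction from $S_n \times S_k \times G$ to $S_{n+k} \times G$. Verifying the Leibnitz rule then amounts to computing $\res_{S_{n+k} \to S_{n+k-1}}(\phi_{n,n+k}(\alpha))$ and recognizing it as $\phi_{n-1, n+k-1}(\nabla(\alpha)) + \phi_{n, n+k-1}(\alpha)$, which is precisely the conclusion of Dold's lemma applied to $\tilde{\alpha} \coloneq \mathrm{pb}(\alpha) \in \rH^t(S_n \times S_k \times G, V)$.

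To execute this, I apply Lemma~\ref{lem:dold:simple} with its parameters $(n, m)$ set to $(n, k)$ and $M = V$. The top path $\cor \circ \res$ first restricts $\tilde{\alpha}$ along $S_k = S_{k-1} \times S_1$; since $\tilde{\alpha}$ is pulled back from $S_n \times G$ the result is the pull-back of $\alpha$ to $S_n \times S_{k-1} \times S_1 \times G$. Corestriction, followed by identifying $\rH^t(S_{n+k-1} \times S_1 \times G, V) \cong \rH^t(S_{n+k-1} \times G, V)$ (valid because $V$ is $S$-trivial), gives $\phi_{n, n+k-1}(\alpha) = d(x^{[k]}) \alpha$. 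The bottom path $\cor \circ \zeta^* \circ \res$ first restricts along $S_n = S_{n-1} \times S_1$; triviality of the $S_n$-action on $V$ lets us identify that restriction with the pull-back of $\nabla(\alpha) \in \rH^t(S_{n-1} \times G, V)$. The map $\zeta^*$ is conjugation by a permutation, hence a relabeling isomorphism on cohomology (again because $V$ is $S$-trivial), and composing with the final corestriction produces $\phi_{n-1, n+k-1}(\nabla(\alpha)) = x^{[k]} \nabla(\alpha)$. Summing the two contributions as Dold's lemma requires yields the Leibnitz identity.

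The main obstacle will be carefully tracking the various $S_1$-factors and pull-back/restriction identifications introduced by Lemma~\ref{lem:dold:simple}; all of them collapse because $V$ has trivial $S$-action, so once the commutative diagrams connecting the subgroup inclusions of Lemma~\ref{lem:dold:simple} with those of Proposition~\ref{prop:Theta-D} are written down, the Leibnitz rule falls out directly.
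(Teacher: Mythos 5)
Your proof is correct and follows essentially the same route as the paper: both reduce the Leibniz rule to Dold's Lemma~\ref{lem:dold:simple} applied to the pullback class in $\rH^t(S_n \times S_k \times G, V)$ and then identify the three resulting composites with $\nabla x^{[k]}$, $x^{[k]}\nabla$, and $d(x^{[k]})$ via Proposition~\ref{prop:Theta-D}. (The paper packages the same computation as a commutative diagram whose left squares commute by functoriality of restriction; also, the two identifications you attribute to $S$-triviality of $V$ really follow from $S_1$ being the trivial group, though this does not affect the argument.)
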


\begin{proof}
Consider the following diagram:
{\tiny
\begin{displaymath}
\begin{tikzcd}
\rH^t(S_n \times G, V)  \ar{r}{\res}    & \rH^{t}(S_n \times S_{m -1} \times S_1 \times G, V) \ar[bend left=15]{rrd}{\cor} \\
{\rH^t(S_n \times G, V)}  \ar{u}{\id } \ar[swap]{d}{\res} \ar{r}{\res} & \rH^{t}(S_n \times S_{m} \times G, V) \ar{r}{\cor}  \ar[swap]{u}{\res}  \ar{d}{\res} & \rH^{t}(S_{n+m} \times G, V) \ar{r}{\res} & \rH^{t}(S_{n+m-1}\times S_1 \times G, V)  \\
\rH^t(S_{n-1} \times S_1 \times G, V)  \ar{r}{\res} & \rH^{t}(S_{n-1} \times S_1 \times S_{m} \times G, V) \ar[bend right=15, swap]{rru}{\cor \circ \; \zeta^{\star}} \\
\end{tikzcd}
\end{displaymath}
}%
By Proposition~\ref{prop:Theta-D}, the composite maps defined by the top, the middle and the bottom paths are $x^{[m-1]}$, $\nabla x^{[m]}$ and $x^{[m]} \nabla$ respectively. Since restriction is functorial, we see that the top left and bottom left squares commute. Thus, by the previous lemma, we have $\nabla x^{[m]} = x^{[m]} \nabla + d(x^{[m]})$. This completes the proof.
\end{proof}

We shall need a group theoretic analog of the binomial identity \[ \binom{n+m}{n} \binom{n}{d} = \binom{n+m}{d} \binom{n+m-d}{n-d}. \] We introduce a notation first. Let $B \subset A$ be subsets of a finite ordered set $U$. We can think of $A,B$ and $U$ as objects of $\FI$ once we forget the order. Suppose $S \in \binom{A}{B}$, that is, $S$ is a subset of $A$ of size $|B|$. We denote, by $\gamma_S$, the element of $\Aut_{\FI}(U)$ that fixes the complement of $A$ pointwise, takes $B$ to $S$, and is order preserving on both $B$ and $A \setminus B$. The following lemma is easy to verify and provides a group theoretic analog of the identity above.

\begin{lemma} Let $\{A_1, A_2, A_3\}$ be  a partition of a finite ordered set $U$. Then the following identity holds in $\bk[\Aut_{\FI}(U)]$ 
	\[ \sum_{S' \in \binom{U}{A_1 \sqcup A_2}} \sum_{S \in \binom{A_1 \sqcup A_2 }{A_1}} \gamma_{S'}\gamma_S = \sum_{T' \in \binom{U}{A_2 \sqcup A_3}} \sum_{T \in \binom{A_2 \sqcup A_3}{A_2}} \gamma_{T'}\gamma_T.  \] Moreover, all the elements of $\Aut_{\FI}(U)$ appearing in the left (right) hand side are distinct.
\end{lemma}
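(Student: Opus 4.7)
The plan is to show that both sides of the identity equal the same canonical sum $\sum_{(X_1,X_2,X_3)} \gamma_{X_1,X_2,X_3}$ indexed by ordered partitions $U = X_1 \sqcup X_2 \sqcup X_3$ with $|X_i|=|A_i|$, where $\gamma_{X_1,X_2,X_3}$ denotes the unique element of $\Aut_{\FI}(U)$ sending each $A_i$ to $X_i$ order preservingly. Uniqueness of such an element is clear from the order constraint, and this also gives the distinctness claim, since the ordered partition $(X_1,X_2,X_3)$ can be read off from $\gamma_{X_1,X_2,X_3}$ as the triple of images of $A_1,A_2,A_3$.

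First I would unwind $\gamma_{S'}\gamma_S$ on the left. By definition, $\gamma_S$ sends $A_1$ order preservingly onto $S \subseteq A_1 \sqcup A_2$ and $A_2$ order preservingly onto $(A_1 \sqcup A_2) \setminus S$, fixing $A_3$ pointwise; then $\gamma_{S'}$ sends $A_1 \sqcup A_2$ order preservingly onto $S'$ and $A_3$ order preservingly onto $U \setminus S'$. Composing, and writing $X_1 \subseteq S'$ for the image of $S$ under the order-preserving bijection $A_1 \sqcup A_2 \to S'$ supplied by $\gamma_{S'}$, the composition $\gamma_{S'}\gamma_S$ sends $A_1 \to X_1$, $A_2 \to S'\setminus X_1$, and $A_3 \to U \setminus S'$, each of the three restrictions being order preserving (since a composition of order-preserving maps is order preserving). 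Thus $\gamma_{S'}\gamma_S = \gamma_{X_1,\,S'\setminus X_1,\,U\setminus S'}$.

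The key step is then the observation that $(S',S) \mapsto (X_1,\, S'\setminus X_1,\, U\setminus S')$ is a bijection from the left-hand index set onto the set of ordered partitions of $U$ of shape $(|A_1|,|A_2|,|A_3|)$: given such $(X_1,X_2,X_3)$ one recovers $S' = X_1 \sqcup X_2$ uniquely, and $S$ as the preimage of $X_1$ under the order-preserving bijection $A_1 \sqcup A_2 \to S'$. A symmetric unwinding of $\gamma_{T'}\gamma_T$ shows that this product sends $A_1 \to U\setminus T'$, $A_2 \to Y_2$, $A_3 \to T'\setminus Y_2$ order preservingly, where $Y_2 \subseteq T'$ is the image of $T$ under the order-preserving bijection $A_2 \sqcup A_3 \to T'$; the assignment $(T',T) \mapsto (U\setminus T',\,Y_2,\,T'\setminus Y_2)$ is again a bijection onto the same set of ordered partitions.

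Since both sides now index the same collection of permutations $\gamma_{X_1,X_2,X_3}$ by the same set of ordered partitions of $U$ (with no repetitions, by the uniqueness observation above), the two sums agree. There is no conceptual obstacle: the identity simply expresses that a sequential shuffle $(A_1\sqcup A_2)|A_3$ followed by $A_1|A_2$, and the shuffle $A_1|(A_2\sqcup A_3)$ followed by $A_2|A_3$, both parametrize the set of ordered tripartitions of $U$ of the prescribed sizes. The only real work is the bookkeeping verifying that ``order preserving'' is preserved through the composition in each of the two orderings; this is where I would be most careful, but it is routine.
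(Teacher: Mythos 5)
Your proof is correct. The paper does not actually supply a proof of this lemma---it merely remarks that the lemma is ``easy to verify''---so there is no argument in the paper to compare against; you have supplied the verification the authors left to the reader. Your strategy is the natural one: show that both sides are in bijection with the set of ordered tripartitions $(X_1,X_2,X_3)$ of $U$ with $|X_i|=|A_i|$, with the summand $\gamma_{S'}\gamma_S$ (resp.\ $\gamma_{T'}\gamma_T$) equal to the unique element of $\Aut_{\FI}(U)$ sending each $A_i$ order-preservingly to $X_i$. The bookkeeping you flag as the only real work---checking that each restriction $A_i\to X_i$ of the composite is order preserving, which follows because a restriction of an order-preserving map is order preserving and a composition of order-preserving maps is order preserving---is carried out correctly, and the inverses to the assignments $(S',S)\mapsto(X_1,X_2,X_3)$ and $(T',T)\mapsto(X_1,X_2,X_3)$ are correctly described, which gives both the identity and the distinctness claim at once.
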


\begin{proposition}
\label{prop:shapiro}
Let $V$ be an object of $\Rep_{\bk}(S_{\ast})$ concentrated in degree~$d$, and let $V'=V_d$, regarded as an object of $\Rep_{\bk}(S_{\ast})^{S_d}$ concentrated in degree~0. Then Shapiro isomorphism $\Gamma^t(\cI(V)) \to \Gamma_{S_d}^t(\cI(V'))[d]$ is a map of $\bD$-modules.  
\end{proposition}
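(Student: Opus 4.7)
The plan is to verify directly that the Shapiro isomorphism intertwines the explicit $\bD$-actions described in Propositions~\ref{prop:composite-map} and~\ref{prop:Theta-D}. Writing $W = V \boxtimes \mathrm{triv}$, we have $\cI(V)_n = \Ind^{S_n}_{S_d \times S_{n-d}} W$, and the Shapiro map is the Frobenius reciprocity isomorphism
\begin{displaymath}
\mathrm{sh}_n \colon \rH^t(S_n, \cI(V)_n) \xrightarrow{\sim} \rH^t(S_d \times S_{n-d}, W) = \rH^t(S_{n-d} \times S_d, V').
\end{displaymath}
By Proposition~\ref{prop:composite-map}, the action of $x^{[m-n]}$ on $\Gamma^t(\cI(V))$ is $\cor \circ i_* \circ p^*$, where $p^*$ is pull-back along the projection $p \colon S_n \times S_{m-n} \to S_n$, $i_*$ is induced by the transition $i \colon \cI(V)_n \to \cI(V)_m$, and $\cor$ is corestriction. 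By Proposition~\ref{prop:Theta-D}, the corresponding action on $\Gamma^t_{S_d}(\cI(V'))[d]$ is simply $\cor \circ q^*$, since the transition on $\cI(V')$ is the identity (as $\cI(V')_k = V'$ for all $k$). I would decompose the commutativity to be checked into two subdiagrams---one for pull-back, and one for the composite $\cor \circ i_*$---and treat each separately.

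For the pull-back square, observe that as an $S_n \times S_{m-n}$-module with $S_{m-n}$ acting trivially, $\cI(V)_n$ is itself induced:
\begin{displaymath}
\cI(V)_n = \Ind^{S_n \times S_{m-n}}_{S_d \times S_{n-d} \times S_{m-n}} W.
\end{displaymath}
Frobenius reciprocity yields an intermediate Shapiro isomorphism
\begin{displaymath}
\mathrm{sh}_{n,m} \colon \rH^t(S_n \times S_{m-n}, \cI(V)_n) \xrightarrow{\sim} \rH^t(S_d \times S_{n-d} \times S_{m-n}, W),
\end{displaymath}
and the naturality of Frobenius reciprocity under the pair of projections $S_n \times S_{m-n} \to S_n$ and $S_d \times S_{n-d} \times S_{m-n} \to S_d \times S_{n-d}$ gives $\mathrm{sh}_{n,m} \circ p^* = q^* \circ \mathrm{sh}_n$.

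For the $\cor \circ i_*$ square, the key computational input is Mackey's formula applied to $\Res^{S_m}_{S_n \times S_{m-n}} \Ind^{S_m}_{S_d \times S_{m-d}} W$. Since $(S_n \times S_{m-n}) \cap (S_d \times S_{m-d}) = S_d \times S_{n-d} \times S_{m-n}$, the identity-coset summand in the Mackey decomposition is precisely $\Ind^{S_n \times S_{m-n}}_{S_d \times S_{n-d} \times S_{m-n}} W \cong \cI(V)_n$, and $i$ is exactly the inclusion of this summand. Combined with the standard compatibility of corestriction with Shapiro---which, summand by summand in Mackey, presents $\cor$ as a transfer between the relevant subgroups---one obtains $\mathrm{sh}_m \circ \cor \circ i_* = \cor \circ \mathrm{sh}_{n,m}$, where the right-hand $\cor$ is the transfer from $S_d \times S_{n-d} \times S_{m-n}$ to $S_d \times S_{m-d}$. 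Concatenating with the pull-back square produces the proposition.

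The main obstacle I anticipate is the $\cor \circ i_*$ step: one must carefully track corestriction through the full Mackey decomposition of $\Res^{S_m}_{S_n \times S_{m-n}} \cI(V)_m$ and verify that, after pre-composition with $i_*$, only the identity-coset summand contributes, yielding exactly the transfer from $S_d \times S_{n-d} \times S_{m-n}$ into $S_d \times S_{m-d}$ prescribed by Proposition~\ref{prop:Theta-D}. Everything else is a routine instance of the naturality of the Frobenius adjunction.
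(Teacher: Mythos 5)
Your proposal is correct, but it takes a genuinely different route from the paper. The paper's proof does not split the diagram into subsquares. Instead, it passes immediately to the chain level: since every cohomology group in the rectangle can be computed from a single $\bk[S_{n+m}]$-free resolution $P$ of $\bk$, it suffices to verify commutativity with each $\rH^t$ replaced by $\Hom_H(P, -)$ for the appropriate subgroup $H$. The paper then writes out the two composite chain maps $\phi, \psi \colon \Hom_{S_{n-d} \times S_d}(P, V) \to \Hom_{S_{n+m}}(P, \cI(V)_{n+m})$ explicitly in terms of the coset-shuffle elements $\gamma_S$ introduced just before the proposition, and shows $\phi = \psi$ by invoking the preceding lemma, a group-algebra version of the binomial identity $\binom{n+m}{n}\binom{n}{d} = \binom{n+m}{d}\binom{n+m-d}{n-d}$. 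That chain-level identity is, under the hood, exactly the Mackey double-coset compatibility you invoke abstractly; the paper opts to verify it by hand in the one case it needs rather than cite it. Your version---splitting into a pull-back square settled by naturality of Frobenius reciprocity, and a $\cor \circ i_*$ square settled by Mackey's formula together with the standard compatibility of transfer with the Shapiro isomorphism on the identity-coset summand---is more modular, and your identification of the $\FI$-transition $i$ with the inclusion of the identity-coset summand of $\Res^{S_m}_{S_n \times S_{m-n}} \cI(V)_m$ is the correct key observation. The trade-off is that the ``standard compatibility'' you lean on carries essentially all the content; the paper's self-contained $\gamma_S$ computation is doing that work for you.
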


\begin{proof}
Let $f \colon [n] \to [n+m]$ be the natural inclusion. We need to verify the commutativity  of the following diagram.
\begingroup
\setlength{\belowcaptionskip}{1000pt}
\fontsize{8.0pt}{12pt}\selectfont
\begin{displaymath}
\begin{tikzcd}
\rH^{t}(S_n, \cI(V)_n) \ar{r}{\res}    & \rH^{t}(S_{n} \times S_m, \cI(V)_{n})  \ar{r}{f_{\star}} & \rH^{t}(S_{n} \times S_m, \cI(V)_{n+m})  \ar{r}{\cor}& \rH^{t}(S_{n+m}, \cI(V)_{n+m}) \\
\rH^{t}(S_{n-d} \times S_d, V) \ar{r}{ \res}  \ar{u}{\text{Shapiro}}  & \rH^{t}(S_{n-d} \times S_m \times S_d, V)  \ar{r}{\id}  & \rH^{t}(S_{n-d} \times S_m \times S_d, V) \ar{r}{\cor} & \rH^{t}(S_{n+m-d} \times S_d, V) \ar[swap]{u}{\text{Shapiro}} 
\end{tikzcd}
\end{displaymath}
\endgroup
 Each of the cohomology groups in the diagram above can be calculated by using a common $\bk[S_{n+m}]$-free resolution of $\bk$. Thus is suffices to verify the commutativity of the following diagram for a given $\bk[S_{n+m}]$-module $P$. \begingroup
\setlength{\belowcaptionskip}{1000pt}
\fontsize{8.0pt}{12pt}\selectfont
\begin{displaymath}
\begin{tikzcd}
\Hom_{S_n}(P, \cI(V)_n) \ar{r}{\res}     & \Hom_{S_{n} \times S_m}(P, \cI(V)_{n})  \ar{r}{f_{\star}} & \Hom_{S_{n} \times S_m}(P, \cI(V)_{n+m})  \ar{r}{\cor}& \Hom_{S_{n+m}}(P, \cI(V)_{n+m})  \\
\Hom_{S_{n-d} \times S_d}(P, V) \ar{r}{ \res} \ar{u}{Shapiro} & \Hom_{S_{n-d} \times S_m \times S_d}(P, V)  \ar{r}{\id}  & \Hom_{S_{n-d} \times S_m \times S_d}(P, V) \ar{r}{\cor} & \Hom_{S_{n+m-d} \times S_d}(P, V) \ar[swap]{u}{Shapiro}
\end{tikzcd}
\end{displaymath} 
\endgroup Set $U = [n+m]$, $A_1 = [d]$, $A_2 = [n] \setminus [d]$ and $A_3 = [n+m] \setminus [n]$.  Then we have  \[\cI(V)_{n+m} = \bk[\Aut_{\FI}(U)] \otimes_{\bk[\Aut_{\FI}(A_2 \sqcup A_3) \times \Aut_{\FI}(A_1)]} V.\] Fix an element $a \in \Hom_{S_{n-d} \times S_d}(P, V)$. The images, say $\phi$ and $\psi$, of $a$ in $\Hom_{S_{n+m}}(P, \cI(V)_{n+m})$ along the top and the bottom paths are given by  \begin{align*}
\phi(p) &= \sum_{S' \in \binom{U}{A_1 \sqcup A_2}} \sum_{S \in \binom{A_1 \sqcup A_2 }{A_1}} \gamma_{S'}\gamma_S \otimes a(\gamma_S^{-1} \gamma_{S'}^{-1} p) \\
\psi(p) &= \sum_{T' \in \binom{U}{A_2 \sqcup A_3}} \sum_{T \in \binom{A_2 \sqcup A_3}{A_2}} \gamma_{T'} \otimes \gamma_T a(\gamma_T^{-1} \gamma_{T'}^{-1} p) \\
&= \sum_{T' \in \binom{U}{A_2 \sqcup A_3}} \sum_{T \in \binom{A_2 \sqcup A_3}{A_2}} \gamma_{T'} \gamma_T \otimes  a(\gamma_T^{-1} \gamma_{T'}^{-1} p)
\end{align*} Thus by the previous lemma, we see that $\phi = \psi$. This completes the proof.
\end{proof}

\begin{lemma}
\label{lem:bootstrap}
Suppose $n > 2 t \ge 0$. Then the restriction map $\rH^{t+1}(S_n, \bZ) \to \rH^{t+1}(S_{n-1}, \bZ)$ is an isomorphism.
\end{lemma}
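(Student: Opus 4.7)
The plan is to deduce this sharpened stability statement from Nakaoka's \emph{homological} stability theorem via the Universal Coefficient Theorem, exploiting the fact that $S_n$ is a finite group. Since $S_n$ is finite, its integral homology $\rH_i(S_n,\bZ)$ is a finite abelian group (hence torsion) for every $i \ge 1$, because $\rH_i(S_n,\bQ) = 0$ in positive degrees. The natural UCT short exact sequence
\[
0 \to \Ext^1(\rH_t(S_n,\bZ),\bZ) \to \rH^{t+1}(S_n,\bZ) \to \Hom(\rH_{t+1}(S_n,\bZ),\bZ) \to 0
\]
then has vanishing right-hand term for $n \ge 2$, since $\Hom$ of a torsion group into $\bZ$ is zero. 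This yields a natural isomorphism $\rH^{t+1}(S_n,\bZ) \cong \Ext^1(\rH_t(S_n,\bZ),\bZ)$.

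Next I would invoke Nakaoka's theorem in its homological form: the inclusion $S_{n-1} \hookrightarrow S_n$ induces an isomorphism $\rH_t(S_{n-1},\bZ) \to \rH_t(S_n,\bZ)$ whenever $n - 1 \ge 2t$, i.e., precisely in the range $n > 2t$. The contravariant functor $\Ext^1(-,\bZ)$ restricts to an anti-equivalence on the category of finite abelian groups (essentially Pontryagin duality, via $\Ext^1(A,\bZ) \cong \Hom(A,\bQ/\bZ)$ for finite $A$), so it carries Nakaoka's isomorphism to an isomorphism on the $\Ext^1$ groups. Naturality of the UCT sequence with respect to $S_{n-1} \hookrightarrow S_n$ identifies the resulting map with the restriction $\rH^{t+1}(S_n,\bZ) \to \rH^{t+1}(S_{n-1},\bZ)$, which is therefore an isomorphism.

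The edge cases are trivial: for $n = 1$ both cohomology groups vanish, and separately for $t = 0$ one has $\rH^1(S_n,\bZ) = \Hom(S_n^{\mathrm{ab}},\bZ) = 0$ for every $n$. The reason the bound $n > 2t$ beats the range $n > 2(t+1)$ one would obtain by applying cohomological Nakaoka directly is that integral cohomology in degree $t+1$ is controlled entirely by integral homology in degree $t$, a shift by one that lowers the stability threshold. There is no real obstacle; the substantive input beyond UCT is Nakaoka's homological stability with its sharp range, and the remainder is formal naturality.
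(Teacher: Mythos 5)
Your proof is correct and follows essentially the same route as the paper: apply the Universal Coefficient Theorem to identify $\rH^{t+1}(S_n,\bZ)$ with $\Ext^1_{\bZ}(\rH_t(S_n,\bZ),\bZ)$, then invoke Nakaoka's homological stability theorem in its sharp range $n > 2t$. Your write-up is somewhat more explicit about why the $\Hom$ term in the UCT vanishes and about naturality, but the underlying argument is identical.
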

\begin{proof} Nakaoka's stability theorem  (\cite{nakaoka}) for the homology of symmetric groups states that the restriction map $\rH_{t}(S_{n-1}, \bZ) \to \rH_t(S_{n}, \bZ)$ is an isomorphism for $n >2 t$. Thus the result follows from the isomorphism  $\rH^{t+1}(S_n, \bZ) \cong \ext^1_{\bZ}(\rH_t(S_n, \bZ), \bZ)$ (deduced from the universal coefficient theorem).
\end{proof}

\begin{proof}[Proof of Proposition~\ref{prop:induced-connection}]
It suffices to treat the case where $V$ is concentrated in degree $d$. Let $V'=V_d$, regarded as an object of $\Rep_{\bk}(S_{\ast})^{S_d}$ in degree~0. Set $M = \Gamma_{S_d}^t(\cI(V'))$. By Proposition~\ref{prop:shapiro}, $\Gamma^t(\cI(V))$ is isomorphic to $M[d]$ as $\bD$-modules, and by Proposition~\ref{prop:connection-on-J} we have a connection $\nabla$ on $M$.

We claim that $\nabla_n$ is an isomorphism for $n>2t$. To see this, observe that (see \cite{mathoverflow})
{\small
\begin{align*}
M_n = \rH^t(S_d \times S_n, V \boxtimes \bZ) &=  \bigoplus_{a + b =t+1} \Tor_1^{\bZ}(\rH^a(S_d, V), \rH^b(S_{n}, \bZ)) \oplus \bigoplus_{a + b =t} \rH^a(S_d, V) \otimes \rH^b(S_{n}, \bZ) 
\end{align*}
}%
By Lemma~\ref{lem:bootstrap}, the restriction map $\rH^b(S_{n}, \bZ) \to \rH^b(S_{n-1}, \bZ)$ is an isomorphism for $n >2t$. Since the K\"{unneth} formula commutes with restriction, the claim follows.

The above claim implies $\ker(\nabla_n)=0$ for $n>2t$, which proves statement (a). By Proposition~\ref{prop:connection}, we have $M/\bD_{+} M = \ker \nabla$ and an isomorphism $M \cong \bD \otimes_{\bk} \ker(\nabla)$, and so (b) follows.
\end{proof}

\subsection{Proof of main theorem}
\label{sec:proof-main}
 For an $\FI$-module $M$, define $\ul{\Gamma}(M)=\Gamma(\bS(M))$, where $\bS$ is the saturation functor. The functor $\ul{\Gamma}$ is left-exact, so we can consider its right derived functors $\rR \ul{\Gamma}$. We put $\ul{\Gamma}^t=\rR^t \ul{\Gamma}$. We note that the map $M \to \bS(M)$ induces a map $\Gamma(M) \to \ul{\Gamma}(M)$, and thus maps $\Gamma^t(M) \to \ul{\Gamma}^t(M)$ for all $t \ge 0$.

\begin{theorem}
\label{thm:main}
Let $M$ be a finitely generated $\FI$-module. Then for each $t \ge 0$: 
\begin{enumerate}
\item $\ul{\Gamma}^t(M)$ is a finitely presented $\bD$-module.
\item The map $\Gamma^t(M) \to \ul{\Gamma}^t(M)$ is an isomorphism in degrees $\ge g(M)+\max(g(M), r(M))$.
\item If $\bk$ is a field of characteristic $p>0$, then $\lambda(\ul{\Gamma}^t(M)) \le 2t+\delta(M)$.
\end{enumerate}
\end{theorem}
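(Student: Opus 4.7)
The plan is to replace $M$, up to a bounded torsion complex, by a finite complex $I^{\bullet}$ of direct sums of induced $\FI$-modules via Theorem~\ref{thm:triangle-induced}, and then compute $\ul{\Gamma}^t(M) = \rH^t(\rR\Gamma(I^{\bullet}))$ using the hypercohomology spectral sequence. Concretely, apply Theorem~\ref{thm:triangle-induced} to $M$ (regarded as a complex concentrated in degree $0$) to obtain a triangle $T^{\bullet} \to M \to I^{\bullet}$ in the derived category with $T^{\bullet}$ a bounded complex of finitely generated torsion modules and $I^{\bullet}$ a bounded complex of direct sums of induced modules satisfying $\delta(I^p) \le \delta(M)$. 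Since $\rT$ annihilates $T^{\bullet}$ and each $I^p$ is derived saturated (Theorem~\ref{thm:perfect}), we have $\rR\bS(M) \simeq I^{\bullet}$, and hence $\ul{\Gamma}^t(M) = \rH^t(\rR\Gamma(I^{\bullet}))$.

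Next, invoke the hypercohomology spectral sequence
\[
E_1^{p,q} = \Gamma^q(I^p) \implies \ul{\Gamma}^{p+q}(M),
\]
whose differentials are $\bD$-linear by the $\bD$-module valued functoriality of $\Gamma^q$ (\S\ref{sec:gamma-generalities}). Applying Proposition~\ref{prop:induced-connection} to each summand $\cI(V_j^p)$ of $I^p$, every $E_1^{p,q}$ is a finitely generated free $\bD$-module of the form $M_0^{p,q} \otimes_{\bk} \bD$ with $M_0^{p,q}$ a finitely generated graded $\bk$-module supported in degrees $\le 2q + \delta(M)$; in particular $\lambda(E_1^{p,q}) \le 2q + \delta(M)$.

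For part~(a), only finitely many $E_1$-entries contribute to each total degree, so the spectral sequence collapses after finitely many pages. The coherence of $\bD$ propagates finite presentation through every page (kernels and cokernels of $\bD$-linear maps between finitely presented $\bD$-modules are finitely presented), so each $E_\infty^{p,t-p}$ is finitely presented, and hence so is their finite extension $\ul{\Gamma}^t(M)$. For part~(c), since $E_\infty^{p,t-p}$ is a finitely presented subquotient of $E_1^{p,t-p}$, Proposition~\ref{d:lambda} gives $\lambda(E_\infty^{p,t-p}) \le 2(t-p) + \delta(M) \le 2t + \delta(M)$; iterating Proposition~\ref{d:ext} along the finite filtration of $\ul{\Gamma}^t(M)$ with associated graded $\bigoplus_p E_\infty^{p,t-p}$ yields $\lambda(\ul{\Gamma}^t(M)) \le 2t + \delta(M)$.

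For part~(b), apply $\rR\Gamma$ to the triangle $T^{\bullet} \to M \to I^{\bullet}$; the cohomology of the cone of $M \to I^{\bullet}$ agrees (up to quasi-isomorphism) with that of the complex $M \to \tilde I^{\bullet}$ from Theorem~\ref{thm:semires} since both $I^{\bullet}$ and $\tilde I^{\bullet}$ represent $\rR\bS(M)$, and Theorem~\ref{thm:homology-bound} shows this cohomology vanishes in $\FI$-degrees $\ge g(M) + \max(g(M), r(M))$. Since $\rR\Gamma$ is computed pointwise, $\rR\Gamma(T^{\bullet})_n = 0$ in those degrees, so the induced long exact sequence in degree $n$ produces the asserted isomorphism $\Gamma^t(M)_n \cong \ul{\Gamma}^t(M)_n$. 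The main subtlety I anticipate is transferring the explicit support bound of Theorem~\ref{thm:homology-bound} from the semi-induced presentation of $\rR\bS(M)$ to the induced-summand presentation used here, and setting up the hypercohomology spectral sequence with sufficient care over the non-noetherian (but coherent) ring $\bD$; both points are bookkeeping rather than deep.
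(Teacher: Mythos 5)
Your parts (a) and (b) follow the paper's argument closely and are correct; the one step you elide is the identification $\ul{\Gamma}^t(M) = \rH^t(\rR\Gamma(I^{\bullet}))$, which requires $\rR\ul{\Gamma} \simeq \rR\Gamma \circ \rR\bS$. The paper justifies this by a lemma showing that the saturation functor $\bS$ preserves injectives (reducing to the fact that an essential extension of a torsion $\FI$-module is torsion); you should at least cite this. The real problem is in part (c), where you pass from $\lambda(E_\infty^{p,t-p}) \le 2(t-p)+\delta(M)$ to $\le 2t+\delta(M)$ ``since $p\ge 0$.'' But the complex $I^{\bullet}$ produced by Theorem~\ref{thm:triangle-induced} is the total complex of a double complex: one starts with the semi-induced complex $\tilde I^{\bullet}$ of Theorem~\ref{thm:semires} (concentrated in cohomological degrees $[0,\delta(M)]$) and replaces each term by its \emph{left} resolution by induced modules from Proposition~\ref{prop:induced-resolution}. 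That resolution lives in non-positive relative cohomological degrees, so $I^{\bullet}$ is concentrated in degrees $[-\delta(M)-1,\delta(M)]$, not $[0,\delta(M)]$. For $p<0$ the inequality $2(t-p)+\delta(M)\le 2t+\delta(M)$ is false, and the corresponding $E_\infty^{p,t-p}$ have no a priori reason to vanish; good truncation of $I^{\bullet}$ to non-negative degrees destroys the property of being a complex of direct sums of induced modules, while brutal truncation destroys the quasi-isomorphism with $\rR\bS(M)$.

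The fix is to run the hypercohomology spectral sequence on $\tilde I^{\bullet}$ itself, which \emph{is} concentrated in $[0,\delta(M)]$. The $E_1$ terms $\Gamma^q(\tilde I^p)$ are then only finitely presented (not free), but one bounds $\lambda(\Gamma^q(\tilde I^p)) \le 2q+\delta(M)$ by d\'evissage along a filtration of $\tilde I^p$ with induced subquotients, using Propositions~\ref{d:ext} and~\ref{d:lambda} --- this is precisely the paper's intermediate step for semi-induced modules. With that in hand, your subquotient argument on $E_\infty$ closes out part (c), and is a clean repackaging of the paper's induction on nested subcomplexes of $\tilde I^{\bullet}$; in fact the spectral-sequence formulation absorbs the bookkeeping for which the paper uses the decreasing-degree bound $\delta(\tilde I^p) \le \delta(M)-p$, which your version does not need.
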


We need a lemma. 

\begin{lemma}
	The saturation functor $\bS$ preserves injectives. In particular, we have a canonical isomorphism $\rR \ul{\Gamma} \to \rR\Gamma \circ \rR \bS$.
\end{lemma}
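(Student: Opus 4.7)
The ``in particular'' clause follows formally from the first: once we know $\bS$ preserves injectives (hence, in particular, sends them to $\Gamma$-acyclics), the Grothendieck spectral sequence for the composition $\ul{\Gamma} = \Gamma \circ \bS$ applies and yields the canonical isomorphism $\rR \ul{\Gamma} \cong \rR\Gamma \circ \rR \bS$ of derived functors. So the essential content is to show $\bS$ sends injectives to injectives.

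My strategy is a decomposition argument. For any injective $I$ in $\Mod_{\FI}$, I would show that the torsion submodule $I_{\tors}$ splits off as a direct summand: $I = I_{\tors} \oplus I'$, with $I'$ torsion-free and itself injective. Granting this, $\bS$ annihilates $I_{\tors}$, so $\bS(I) = \bS(I')$, and I claim $\bS(I') = I'$. Indeed, since $I'$ is torsion-free the canonical map $\eta \colon I' \to \bS(I')$ is a monomorphism with torsion cokernel; by injectivity of $I'$ this inclusion splits, so $\bS(I') \cong I' \oplus N$ for some $N$. But $N$ is simultaneously a submodule of the saturated (hence torsion-free) module $\bS(I')$ and isomorphic to the torsion quotient $\bS(I')/I'$, which forces $N = 0$. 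Therefore $\bS(I) \cong I'$ is a direct summand of $I$, and thus injective.

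The main obstacle is justifying the initial decomposition, which reduces to showing that $I_{\tors}$ is itself injective whenever $I$ is. This is equivalent to asserting that the localizing subcategory $\Mod_{\FI}^{\tors}$ is \emph{stable}, that is, closed under injective envelopes in $\Mod_{\FI}$; equivalently, that every essential extension of a torsion $\FI$-module is torsion. My plan here is to argue directly: given a torsion $T$ with essential extension $T \hookrightarrow J$ and an element $y \in J_n$, the cyclic submodule generated by $y$ must meet $T$ non-trivially, and by analyzing the explicit structure of this submodule under $\FI$-transitions and exploiting the eventual vanishing that witnesses torsion, one should be able to force $y$ itself to be torsion. Carrying out this verification carefully is the real technical crux of the argument; it may also be extractable from the detailed analysis of saturation in~\cite{djament} that underlies several of the results already cited in the text.
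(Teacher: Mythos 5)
Your argument is, in substance, the same as the paper's. The paper factors $\bS = \rS\circ\rT$, notes that $\rS$ preserves injectives formally (it is right adjoint to the exact localization $\rT$), and reduces to showing that $\rT$ preserves injectives, which by a criterion of Gabriel (\cite[p.375, Corollaire~3]{gabriel}) holds exactly when $\Mod_{\FI}^{\tors}$ is closed under injective envelopes; the paper then asserts, without further justification, that essential extensions of torsion $\FI$-modules are torsion. Your explicit splitting $I = I_{\tors}\oplus I'$ and the identification $\bS(I)\cong I'$ is precisely Gabriel's criterion unpacked by hand, and your handling of $\bS(I')=I'$ (split monomorphism into a torsion-free saturated module with torsion cokernel) is correct. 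So you have isolated exactly the same crux as the paper.

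One caution about your sketch for that crux: knowing that the cyclic submodule $\langle y\rangle$ meets $T$ nontrivially does not by itself force $y$ to be torsion; a single torsion element of $\langle y\rangle$ gives no direct control on $y$. The argument that actually closes the gap uses noetherianity of $\FI$-modules. Set $N=\langle y\rangle$; it is finitely generated, hence noetherian, so $N_{\tors}$ is finitely generated torsion and therefore vanishes in degrees $>m$ for some $m$. If $y$ were not torsion then $N_k\neq0$ for some $k>m$, and the submodule $N''$ of $N$ generated by $\bigoplus_{k>m}N_k$ would be nonzero and supported only in degrees $>m$. But $N''\cap T\subseteq N\cap T\subseteq N_{\tors}$ lives in degrees $\le m$, so $N''\cap T=0$, contradicting essentiality of $T\subseteq J$. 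With this in place your proposal is complete, and it matches the paper's approach.
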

\begin{proof}
	By definition, $\bS = \rS \circ \rT$. Since $\rS$ is right adjoint to an exact functor, it preserves injectives. It suffices to show that the localization functor $\rT$ preserves injectives. Suppose $M$ is a torsion $\FI$-module and $I$ is its injective envelope in $\Mod_{\FI}$. Then $I$ is an essential extension of $M$. Since an essential extension of a torsion $\FI$-module is itself a torsion $\FI$-module, we conclude that $I$ is a torsion $\FI$-module. This shows that $\rT$ preserves injectives (\cite[p.375, Corollaire~3]{gabriel}). The second assertion is now immediate.
\end{proof}

\begin{proof}[Proof of Theorem~\ref{thm:main}]
(a) If $M$ is induced then $\Gamma^t(M)$ is finitely presented by Proposition~\ref{prop:induced-connection}. If $M$ is semi-induced, then $\Gamma^t(M)$ is finitely presented by d\'evissage to the induced case ($\bD$ is coherent). Finally, for a general module $M$, consider a complex $M \to I^{\bullet}$ as in Theorem~\ref{thm:semires}, so that $\rR \bS(M)\cong I^{\bullet}$. Then $\rR \ul{\Gamma}(M) \cong \rR \Gamma(I^{\bullet})$ (by the previous lemma), and so there is a spectral sequence
\begin{displaymath}
\Gamma^i(I^j) \implies \ul{\Gamma}^{i+j}(M),
\end{displaymath}
from which it follows that $\ul{\Gamma}^t(M)$ is finitely presented.

(b) The complex $M \to I^{\bullet}$ of Theorem~\ref{thm:semires} is exact in degrees $\ge g(M)+\max(g(M), r(M))$ (Theorem~\ref{thm:homology-bound}), and so it follows that the map $\Gamma^t(M) \to \Gamma^t(I^{\bullet})\cong \ul{\Gamma}^t(M)$ is an isomorphism in degrees $\ge g(M)+\max(g(M), r(M))$, since $\Gamma^t$ is computed degree-wise.

(c) First, suppose $M$ is induced from degree $\le d$. Then Proposition~\ref{prop:induced-connection} shows that $\Gamma^t(M)$ is a free $\bD$-module finitely generated in degrees $\le 2t+d$, and so $\lambda(\Gamma^t(M)) \le 2t+d$.

Now suppose that $M$ is semi-induced of degree $\le d$. We prove that $\lambda(\Gamma^t(M)) \le 2t+d$ by induction on the minimal length of a filtration of $M$ with induced subquotients. The base case holds by the previous sentence. In general, there is a short exact sequence
\begin{displaymath}
0 \to N \to M \to N' \to 0
\end{displaymath}
where $N,N'$ are semi-induced of degree $\le d$ and admit filtrations of smaller length than that of $M$. We obtain an exact sequence
\begin{displaymath}
\Gamma^t(N) \to \Gamma^t(M) \to \Gamma^t(N')
\end{displaymath}
By induction, we can assume $\lambda(\Gamma^t(N)), \lambda(\Gamma^t(N')) \le 2t+d$. It follows from Propositions~\ref{d:ext} and~\ref{d:lambda} that $\lambda(\Gamma^t(M)) \le 2t+d$. 

Next suppose that $I^{\bullet}$ is a finite length complex of semi-induced modules with $I^i$ of degree $\le d-i$. We will show $\lambda(\Gamma^t(I^{\bullet})) \le 2t+d$. Let $J^{\bullet}$ be the complex $I^1 \to I^2 \to \cdots$. We have a short exact sequence of complexes
\begin{displaymath}
0 \to J^{\bullet}[1] \to I^{\bullet} \to I^0 \to 0.
\end{displaymath}
We thus obtain an exact sequence
\begin{displaymath}
\Gamma^{t-1}(J^{\bullet}) \to \Gamma^t(I^{\bullet}) \to \Gamma^t(I^0).
\end{displaymath}
By induction on $d$, we can assume $\lambda(\Gamma^{t-1}(J^{\bullet})) \le 2(t-1)+(d-1)$, and the previous paragraph shows $\lambda(\Gamma^t(I^0)) \le 2t+d$. Once again, we conclude $\Gamma^t(I^{\bullet}) \le 2t+d$.

Finally, let $M$ be a finitely generated $\FI$-module of degree $\le d$. Let $M \to I^{\bullet}$ be as in Theorem~\ref{thm:semires}. Then $\ul{\Gamma}^t(M)=\Gamma^t(I^{\bullet})$, and so $\lambda(\ul{\Gamma}^t(M)) \le 2t+\delta(M)$ by the previous paragraph.
\end{proof}

\begin{corollary}
If $M$ is a finitely generated $\FI$-module then $\Gamma^t(M)$ is nearly finitely presented.
\end{corollary}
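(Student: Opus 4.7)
The plan is to read this off immediately from Theorem~\ref{thm:main}, parts (a) and (b), together with the definition of ``nearly finitely presented'' recalled in \S\ref{s:dp}. Specifically, set $N = \ul{\Gamma}^t(M)$. By Theorem~\ref{thm:main}(a), $N$ is a finitely presented $\bD$-module. By Theorem~\ref{thm:main}(b), the canonical $\bD$-module map $\Gamma^t(M) \to N$ is an isomorphism in all degrees $\ge n_0$, where $n_0 = g(M) + \max(g(M), r(M))$.

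Consequently, the induced map on truncations
\begin{displaymath}
\tau_{\ge n_0}(\Gamma^t(M)) \longrightarrow \tau_{\ge n_0}(N)
\end{displaymath}
is an isomorphism of $\bD$-modules. Since $N$ is finitely presented, this exhibits $N$ as a weak fp-envelope of $\Gamma^t(M)$, so $\Gamma^t(M)$ is nearly finitely presented by definition.

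There is essentially no obstacle here: all of the real work has been done in Theorem~\ref{thm:main}. The only thing to check is that the map $\Gamma^t(M) \to \ul{\Gamma}^t(M)$, constructed from the natural transformation $M \to \bS(M)$, is $\bD$-linear, but this is immediate from the functoriality of $\Gamma^t$ and the monoidal structure established in Proposition~\ref{prop:tensor-functor}, so the truncation isomorphism in (b) automatically takes place in the category of $\bD$-modules rather than merely graded $\bk$-modules.
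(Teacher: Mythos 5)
Your proof is correct and matches the paper's (implicit) argument: the corollary follows immediately from Theorem~\ref{thm:main}(a) and (b) together with the definition of ``nearly finitely presented,'' which is exactly what you spelled out. The remark about $\bD$-linearity of $\Gamma^t(M) \to \ul{\Gamma}^t(M)$ is a reasonable extra check, though the paper already treats this map as a map of $\bD$-modules without further comment.
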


\begin{corollary} \label{cor:pd}
Let $M$ be a finitely generated $\FI$-module over a field $\bk$ of characteristic $p$, and let $t \ge 0$ be given. Then there exists a power $q$ of $p$ such that
\begin{displaymath}
\dim_{\bk} \rH^t(S_n, M_n) = \dim_{\bk} \rH^t(S_{n+q}, M_{n+q})
\end{displaymath}
holds for all $n \ge \max(g(M)+r(M), 2g(M), 2t+\delta(M))$.
\end{corollary}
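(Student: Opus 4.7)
The plan is to deduce this corollary directly from Theorem~\ref{thm:main} combined with the periodicity result in Proposition~\ref{prop:stability-period}. The three parts of Theorem~\ref{thm:main} are essentially engineered to supply the three ingredients needed: (i) a finitely presented $\bD$-module governing $\Gamma^t(M)$, (ii) a range in which it agrees with $\Gamma^t(M)$, and (iii) a bound on when periodicity sets in for that $\bD$-module.

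Concretely, I would set $N \coloneq \ul{\Gamma}^t(M)$ and observe, by Theorem~\ref{thm:main}(a), that $N$ is a finitely presented $\bD$-module. Taking $q \coloneq p^{\epsilon(N)}$, which is a power of $p$ by definition, Proposition~\ref{prop:stability-period} yields
\begin{displaymath}
\dim_{\bk} N_n = \dim_{\bk} N_{n+q} \qquad \text{for all } n \ge \lambda(N).
\end{displaymath}
Theorem~\ref{thm:main}(c) bounds $\lambda(N) \le 2t + \delta(M)$, so this periodicity of $N$ holds whenever $n \ge 2t + \delta(M)$.

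To transfer the statement from $N$ back to $\Gamma^t(M) = \bigoplus_n \rH^t(S_n, M_n)$, I would invoke Theorem~\ref{thm:main}(b): the canonical map $\Gamma^t(M) \to N$ is an isomorphism in degrees $\ge g(M) + \max(g(M), r(M)) = \max(g(M) + r(M),\, 2g(M))$. For any $n$ satisfying $n \ge \max(g(M) + r(M),\, 2g(M),\, 2t + \delta(M))$, both $n$ and $n+q$ exceed this threshold (as $q \ge 0$), so
\begin{displaymath}
\dim_{\bk} \rH^t(S_n, M_n) = \dim_{\bk} N_n = \dim_{\bk} N_{n+q} = \dim_{\bk} \rH^t(S_{n+q}, M_{n+q}),
\end{displaymath}
which is exactly the claim. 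There is no substantive obstacle: the corollary is essentially a bookkeeping combination of the three conclusions of Theorem~\ref{thm:main} with the elementary periodicity of finitely presented $\bD$-modules established in \S \ref{sec:dp-positive}.
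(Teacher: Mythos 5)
Your proof is correct and matches the intended argument exactly: the paper presents Corollary~\ref{cor:pd} as an immediate consequence of Theorem~\ref{thm:main} together with Proposition~\ref{prop:stability-period}, and your bookkeeping (using $q = p^{\epsilon(\ul{\Gamma}^t(M))}$, transferring periodicity from $\ul{\Gamma}^t(M)$ to $\Gamma^t(M)$ via part (b), and noting $g(M)+\max(g(M),r(M)) = \max(g(M)+r(M), 2g(M))$) is precisely what is needed.
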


\begin{example}
\label{ex:nfp} $\Gamma^t(M)$ may not be finitely presented: assume that $t=0$ and $M$ be the $\FI$-module over $\bF_p$ satisfying  $M_0 = \bF_p$ and $M_n = 0$ for $n>0$. Then we have $\Gamma^0(M) = \bD/\bD_{+}$ which is clearly not finitely presented ($\bD_{+}$ is not finitely generated).  
\end{example}

\subsection{Bounds on the period}

In this section, $\bk$ is a field of characteristic $p$. Our goal is to prove the following result:

\begin{theorem}
\label{thm:main-positive}
Let $M$ be a finitely generated $\FI$-module and suppose $\delta(M) < q$, with $q=p^r$. Then the $\bD$-module $\ul{\Gamma}^t(M)$ admits a $q$-connection. In particular, $\epsilon(\ul{\Gamma}^t(M)) \le r$.
\end{theorem}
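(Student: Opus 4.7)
The plan is to reduce to the induced case via the semi-induced resolution of Theorem~\ref{thm:semires}, carrying a $q$-connection that is functorial enough to survive through the resolution. The foundational observation is that, by Proposition~\ref{prop:induced-connection}, every $\Gamma^t(\cI(V))$ with $V$ concentrated in degree $d<q$ carries a canonical connection $\nabla$ (obtained from restriction through Shapiro's isomorphism). Its $q$-fold iterate $\nabla^q$ is automatically a $q$-connection, since any $p$-power iterate of a derivation in characteristic $p$ is again a derivation. What the theorem really needs is that this assignment is functorial for enough maps to let us build a $q$-connection on $\ul{\Gamma}^t(M)$.

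The central step (the one flagged in the paper's own overview) is the following functoriality: for a map $f\colon\cI(V)\to\cI(W)$ of induced $\FI$-modules with $V,W$ supported in degrees $<q$, the map $\Gamma^t(f)$ commutes with $\nabla^q$, even though it typically fails to commute with $\nabla$ itself. This is the main obstacle. To prove it, I would combine the explicit formulas from Propositions~\ref{prop:composite-map} and~\ref{prop:connection-on-J}, which express both $\Gamma^t(f)$ and $\nabla$ as alternating compositions of restriction, an $\FI$-module transition map, and corestriction, with the Dold-type identity of Lemma~\ref{lem:dold:simple}. That identity expresses the commutator $[\nabla,\Gamma^t(f)]$ as a single correction term involving a restriction to a smaller symmetric group. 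Iterating the identity $q$ times writes $[\nabla^q,\Gamma^t(f)]$ as a sum over intermediate splittings; the binomial coefficients $\binom{q}{i}$ with $0<i<q$ vanish in characteristic $p$ for $q=p^r$, killing all terms except those where the correction has absorbed all $q$ iterations. For induced modules generated in degrees $<q$, these leftover correction terms land in groups $\rH^t(S_n\times G, V)$ outside the support of the target, so the whole commutator vanishes.

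With this functoriality statement in hand, I would extend the $q$-connection to any semi-induced module $N$ of degree $<q$ using Proposition~\ref{prop:induced-resolution}: $N$ admits a finite resolution $F_\bullet\to N$ by direct sums of induced modules generated in degrees $\le\delta(N)<q$. Applying $\Gamma^t$ and using that $\bD$ is coherent, $\Gamma^t(N)$ is computed as the cohomology of a finite complex of $\bD$-modules whose differentials commute with $\nabla^q$ by the previous step; hence $\Gamma^t(N)$ inherits a canonical $q$-connection. The same argument, applied levelwise to any finite complex of semi-induced modules with $\delta(I^i)<q$, endows all pages and abutments of the associated hypercohomology spectral sequence with compatible $q$-connections.

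Finally, for a general finitely generated $\FI$-module $M$ with $\delta(M)<q$, I would apply Theorem~\ref{thm:semires} to obtain a complex $M\to I^\bullet$ with each $I^k$ semi-induced of degree $\le\delta(M)-k<q$, combined with the identification $\rR\ul{\Gamma}(M)\cong\rR\Gamma(I^\bullet)$ established in the proof of Theorem~\ref{thm:main}. The hypercohomology spectral sequence $E_1^{i,j}=\Gamma^j(I^i)\Rightarrow\ul{\Gamma}^{i+j}(M)$ therefore runs in the category of $\bD$-modules with $q$-connection, and $\ul{\Gamma}^t(M)$ inherits a $q$-connection as claimed. The bound $\epsilon(\ul{\Gamma}^t(M))\le r$ is then an immediate application of Proposition~\ref{prop:q-connection}, once one notes that $\bD^{(q)}=\bD_{\ge r}$ for $q=p^r$ (the only monomials $y_0^{i_0}y_1^{i_1}\cdots$ of degree divisible by $p^r$ with each $i_j<p$ are those with $i_0=\cdots=i_{r-1}=0$).
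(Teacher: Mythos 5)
Your overall strategy matches the paper's exactly: reduce via Corollary~\ref{cor:derived-saturation} and Proposition~\ref{prop:induced-resolution} to a bounded complex of induced modules generated in degree $<q$, use the connection of Proposition~\ref{prop:induced-connection}, establish $\nabla^q$-compatibility of the differentials (Proposition~\ref{prop:compat-conn}), and conclude with Proposition~\ref{prop:q-connection}. You also correctly identify Proposition~\ref{prop:compat-conn} as the crux.

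However, your proposed proof of that key proposition has a genuine gap. You want to express the commutator $[\nabla,\Gamma^t(f)]$ by ``combining the explicit formulas from Propositions~\ref{prop:composite-map} and~\ref{prop:connection-on-J} with the Dold-type identity of Lemma~\ref{lem:dold:simple}.'' But Proposition~\ref{prop:composite-map} describes the action of $x^{[m]}$, not $\Gamma^t(f)$, and Lemma~\ref{lem:dold:simple} encodes the relation $[\nabla,x^{[m]}]=d(x^{[m]})$; neither says anything directly about the commutator of $\nabla$ with an arbitrary $\bD$-linear map $\Gamma^t(f)$ coming from a morphism of induced modules. As stated, ``iterating the Dold identity $q$ times'' is not a well-defined operation here. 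The tool that actually makes your binomial argument work is the monoidal structure of $\rR^{\bullet}\Gamma$: Propositions~\ref{prop:total-tensor-functor} and~\ref{prop:compatible-connections} identify $\rR^{\bullet}\Gamma(\cI(V))$ with $\rR^{\bullet}\Gamma(V)\otimes\bE$ as an $\bE$-module with $\nabla=1\otimes\bd$, and then $\Gamma^t(f)$ becomes an $\bE$-linear map $a\otimes e\mapsto\sum b_i\otimes e_ie$ with $\deg_{\FI}(e_i)<q$. Once this is in place, one can either do your iterated-commutator computation $[\nabla,g_k]=g_{k+1}$ with $g_k(a\otimes e)=\sum b_i\otimes\bd^k(e_i)e$ (so $g_q=0$ since $\bd^q(e_i)=0$), or, as the paper does, simply use that $\bd^q$ is itself a derivation annihilating $\bE_{<q}$ and apply Leibniz directly; the two are the same combinatorics. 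Without the $\bE$-module identification, I do not see how to set up the $g_k$'s at all, so you should cite those two propositions rather than the Dold lemma.

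One further imprecision: you claim that for semi-induced $N$ with resolution $F_\bullet\to N$ by induced modules, ``$\Gamma^t(N)$ is computed as the cohomology of $\Gamma^t(F_\bullet)$.'' This is false in general, since $\Gamma$ is only left exact; you get a hypercohomology spectral sequence, not a single complex. The paper sidesteps the semi-induced intermediate step entirely by replacing $I^\bullet$ directly by a quasi-isomorphic complex of honest induced modules (using the functoriality of the construction in Proposition~\ref{prop:induced-resolution}), and you should do the same. Finally, your (and the paper's) last step --- that the $q$-connection on the $E_1$-page of the hypercohomology spectral sequence, together with $\nabla^q$-equivariance of $d_1$, passes to the abutment --- deserves more justification than a one-line assertion, since the higher differentials are not obviously $\nabla^q$-equivariant; this is glossed over in both treatments.
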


\begin{corollary}
In Corollary~\ref{cor:pd}, one can take $q$ to be any power of $p$ such that $\delta(M) < q$.
\end{corollary}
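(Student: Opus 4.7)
The plan is to combine Theorem~\ref{thm:main-positive} with parts (b) and (c) of Theorem~\ref{thm:main} and then feed the output into Proposition~\ref{prop:stability-period}. The corollary asks us to control two things simultaneously: the period of $\dim_\bk \rH^t(S_n, M_n)$, which is governed by $\epsilon$, and the onset of that periodicity, which is governed by $\lambda$ plus the degree range in which $\Gamma^t(M)$ and $\ul{\Gamma}^t(M)$ agree.

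First I would apply Theorem~\ref{thm:main-positive} to the given $q=p^r>\delta(M)$: this yields a $q$-connection on $\ul{\Gamma}^t(M)$, hence $\epsilon(\ul{\Gamma}^t(M))\le r$. Next, Theorem~\ref{thm:main}(c) gives $\lambda(\ul{\Gamma}^t(M))\le 2t+\delta(M)$. Feeding these two invariants into Proposition~\ref{prop:stability-period} tells us that $\dim_\bk\bigl(\ul{\Gamma}^t(M)_n\bigr)$ is periodic with period dividing $p^{\epsilon(\ul{\Gamma}^t(M))}$, hence a fortiori with period $q$, for all $n\ge 2t+\delta(M)$. (Note that $q$ being a period is automatic from $p^\epsilon$ being a period, since $p^\epsilon\mid q$.)

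Finally I would transfer this periodicity from $\ul{\Gamma}^t(M)$ back to $\Gamma^t(M)$ using Theorem~\ref{thm:main}(b): the canonical comparison map is an isomorphism in degrees $\ge g(M)+\max(g(M),r(M))=\max(g(M)+r(M),2g(M))$. Taking the maximum of the two lower bounds gives the exact threshold $n\ge \max(g(M)+r(M),\,2g(M),\,2t+\delta(M))$ claimed in Corollary~\ref{cor:pd}, with the given $q$ now serving as the advertised period.

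There is essentially no obstacle here beyond bookkeeping: all the genuine work sits upstream in Theorem~\ref{thm:main-positive} (the construction of the $q$-connection) and in the quantitative bounds of Theorem~\ref{thm:main}. The only minor subtlety to keep in mind is that Proposition~\ref{prop:stability-period} produces the minimal period $p^{\epsilon}$, and one must explicitly note that any power of $p$ exceeding $\delta(M)$ is a permissible period, not merely the smallest one.
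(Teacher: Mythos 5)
Your argument is correct and follows exactly the route the paper intends (the paper gives no explicit proof, treating the corollary as immediate from Theorem~\ref{thm:main-positive} together with Theorem~\ref{thm:main}(b),(c) and Proposition~\ref{prop:stability-period}). One tiny imprecision: Proposition~\ref{prop:stability-period} does not produce the \emph{minimal} period (the paper's remark after it gives a counterexample); the relevant point, which you do state correctly, is simply that $p^{\epsilon(\ul{\Gamma}^t(M))}$ divides $q$, so $q$ is also a period.
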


We require a number of preparatory results. In this section, $\rR^{\bullet} \Gamma(V)$ denotes $\bigoplus_{t \ge 0} \Gamma^t(V)$. Note that this space is bigraded, since each $\Gamma^t(V)$ is itself a graded vector space. We call $t$ the cohomological index; writing $\Gamma(V)=\bigoplus_{n \ge 0} \Gamma(V_n)$, we call $n$ the $\FI$-index.

\begin{proposition}
	\label{prop:total-tensor-functor}
The functor $\rR^{\bullet} \Gamma$ respects tensor products, that is, if $M$ and $N$ are representations of $S_{\ast}$ then there is a natural symmetric monoidal isomorphism \[\rR^{\bullet} \Gamma(M \otimes N) \xrightarrow{\cong} \rR^{\bullet} \Gamma(M) \otimes \rR^{\bullet} \Gamma(N).\] Furthermore, this isomorphism is compatible with the symmetries of the tensor products (using the appropriate sign rule on the cohomological index).
\end{proposition}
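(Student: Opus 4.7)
The plan is to reduce the statement to a degree-by-degree computation in the $\FI$-index and invoke Shapiro's lemma together with the K\"unneth formula for group cohomology. For each fixed $n$, the definition of the tensor product gives
\[
(M \otimes N)_n = \bigoplus_{i+j=n} \Ind_{S_i \times S_j}^{S_n}(M_i \otimes_{\bk} N_j),
\]
so since cohomology commutes with finite direct sums,
\[
\rH^t(S_n, (M \otimes N)_n) = \bigoplus_{i+j=n} \rH^t(S_n, \Ind_{S_i \times S_j}^{S_n}(M_i \otimes_{\bk} N_j)).
\]
Shapiro's lemma identifies each summand with $\rH^t(S_i \times S_j, M_i \otimes_{\bk} N_j)$, and since $\bk$ is a field, the K\"unneth theorem for group cohomology gives a canonical isomorphism
\[
\rH^t(S_i \times S_j, M_i \otimes_{\bk} N_j) \cong \bigoplus_{a+b=t} \rH^a(S_i, M_i) \otimes_{\bk} \rH^b(S_j, N_j).
\]
Summing over $i+j=n$ and $a+b=t$ and recognizing the result as the $(n,t)$-bigraded piece of $\rR^{\bullet} \Gamma(M) \otimes \rR^{\bullet} \Gamma(N)$ produces the isomorphism claimed. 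This extends and derives the lax monoidal structure of Proposition~\ref{prop:tensor-functor}, which is already strict because $\bk$ is a field.

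Naturality in $M$ and $N$ is automatic: each of the three ingredients (the decomposition of $(M \otimes N)_n$, Shapiro's lemma, and the K\"unneth isomorphism) is visibly natural in its input modules, and composing natural transformations preserves naturality. The second step will be to verify compatibility with the symmetric braiding. The braiding $M \otimes N \to N \otimes M$ in $\Rep_{\bk}(S_{\ast})$ restricted to the $(i,j)$-summand is the composition of the factor swap $M_i \otimes N_j \to N_j \otimes M_i$ with conjugation by the block-exchange permutation $\sigma_{i,j} \in S_n$. Conjugation by $\sigma_{i,j}$ acts trivially on $\rH^{\ast}(S_i \times S_j,-)$ (it is an inner automorphism on $S_n$, and under Shapiro's lemma it reduces to the abstract relabeling $S_i \times S_j \leftrightarrow S_j \times S_i$), while the tensor factor swap is intertwined with the K\"unneth isomorphism by the Koszul sign rule on the cohomological index. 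Together these yield exactly the symmetry asserted.

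The main obstacle is the symmetry bookkeeping: one must check carefully that (i) Shapiro's lemma is natural with respect to the isomorphism $\Ind_{S_i \times S_j}^{S_n} \cong \Ind_{S_j \times S_i}^{S_n}$ induced by $\sigma_{i,j}$, and (ii) the sign produced by the K\"unneth braiding matches the sign in the target tensor product. Both are standard facts about group cohomology, but writing them out requires keeping track of permutations and Koszul signs simultaneously. Once these are settled, naturality and associativity compatibilities are inherited from Shapiro's lemma and K\"unneth, completing the verification that the isomorphism is symmetric monoidal.
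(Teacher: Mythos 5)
Your proof is correct and follows essentially the same route as the paper's: reduce to a degree-wise computation, apply Shapiro's lemma to identify $\rH^{\bullet}(S_n,\Ind_{S_i\times S_j}^{S_n}(M_i\otimes N_j))$ with $\rH^{\bullet}(S_i\times S_j, M_i\otimes N_j)$, then invoke the K\"unneth isomorphism over the field $\bk$. You spell out the symmetry bookkeeping (block-exchange permutation acting by inner automorphism, Koszul signs in the K\"unneth isomorphism) that the paper dismisses as ``standard''; otherwise the two arguments coincide.
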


\begin{proof}
Since $\rR \Gamma$ and $\otimes$ commute with arbitrary direct sums, it suffices to treat the case where $M$ is a concentrated in degree $m$ and $N$ is concentrated in degree $n$. The result then follows from Shapiro's lemma and the K\"unneth formula:
\begin{displaymath}
\begin{split}
\rR^{\bullet} \Gamma(M \otimes N)
&= \rH^{\bullet}(S_{m+n}, \Ind_{S_m \times S_n}^{S_{n+m}}(M_m \otimes N_n))
\xrightarrow{\cong} \rH^{\bullet}(S_m \times S_n, M_m \otimes N_n) \\
&\xrightarrow{\cong} \rH^{\bullet}(S_m, M_m) \otimes \rH^{\bullet}(S_n, N_n)
= \rR^{\bullet} \Gamma(M) \otimes \rR^{\bullet} \Gamma(N)
\end{split}
\end{displaymath}
The symmetry and associativity properties are standard.
\end{proof}

In particular, we see that $\bE=\rR^{\bullet} \Gamma(\bA)$ naturally has the structure of an algebra. Note that $\bE^t_n=\rH^t(S_n, \bk)$. Let $\bd \colon \bE^t_n \to \bE^t_{n-1}$ be the map induced by restricting cohomology classes from $S_n$ to $S_{n-1}$.

\begin{proposition}
The map $\bd \colon \bE \to \bE$ is a derivation.
\end{proposition}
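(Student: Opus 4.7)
The plan is to deduce the Leibniz rule directly from Dold's Lemma~\ref{lem:dold:simple}, exactly mirroring the strategy of Proposition~\ref{prop:connection-on-J}. First I would unwind the multiplication on $\bE=\rR^\bullet\Gamma(\bA)$ using Proposition~\ref{prop:total-tensor-functor} and the identification of the multiplication on $\bA$ (cf.\ the proof that $\Gamma(\bA)=\bD$): for classes $\alpha\in\bE^t_n$ and $\beta\in\bE^s_m$, the product $\alpha\cdot\beta\in\bE^{t+s}_{n+m}$ is the corestriction to $S_{n+m}$ of the K\"unneth cross product $\alpha\times\beta\in\rH^{t+s}(S_n\times S_m,\bk)$. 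Since $\bd$ preserves the cohomological degree and only lowers the $\FI$-index, the derivation identity to be established is
\begin{displaymath}
\bd(\alpha\cdot\beta)=\bd(\alpha)\cdot\beta+\alpha\cdot\bd(\beta),
\end{displaymath}
with no Koszul signs.

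Next I would apply Lemma~\ref{lem:dold:simple} to the class $\alpha\times\beta$, taking $G$ trivial and $M=\bk$. The middle path computes $\bd(\alpha\cdot\beta)$ by definition. For the top path, functoriality of restriction together with the K\"unneth formula identifies $\res(\alpha\times\beta)\in\rH^{t+s}(S_n\times S_{m-1}\times S_1,\bk)$ with $\alpha\times\bd(\beta)\times 1$; the subsequent corestriction from $S_n\times S_{m-1}\times S_1$ into $S_{n+m-1}\times S_1\cong S_{n+m-1}$ agrees with the product corestriction on the first two factors tensored with the identity on $S_1$, and so yields $\alpha\cdot\bd(\beta)$. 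Similarly, in the bottom path $\res(\alpha\times\beta)=\bd(\alpha)\times 1\times\beta$; the pullback $\zeta^\star$ permutes the $S_1$ and $S_m$ factors, which via K\"unneth introduces only a Koszul sign $(-1)^{0\cdot s}=1$ (the $\rH^\bullet(S_1,\bk)$ factor lives in degree~$0$), producing $\bd(\alpha)\times\beta\times 1$; corestriction then gives $\bd(\alpha)\cdot\beta$.

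The assertion of Dold's lemma is that the middle map is the sum of the top and the bottom, so combining the three identifications above yields the required Leibniz rule. The only non-routine point is the K\"unneth/sign bookkeeping for the $\zeta^\star$ step, but because the $S_1$-cohomology is concentrated in degree $0$ this reduces to checking there is no sign, which I expect to be the main place where one must be careful rather than a genuine obstacle. The derivation property extends to the full algebra $\bE$ by bilinearity.
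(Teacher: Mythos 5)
Your proposal is correct and follows essentially the same route as the paper's proof: apply Dold's Lemma (\ref{lem:dold:simple}) to the cross product $\alpha\times\beta$, identify the top and bottom paths with $\alpha\cdot\bd(\beta)$ and $\bd(\alpha)\cdot\beta$ using commutativity of cup/cross product with restriction and the trivial Koszul sign for $\zeta^\star$ (the $S_1$-cohomology factor is in degree $0$), and conclude. The paper packages the same reasoning as a commutative diagram whose left squares commute because cup product commutes with restriction, while you make the element-level K\"unneth identifications and the sign check explicit; the substance is identical.
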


\begin{proof}
We need to show that in the diagram below the, middle row is the sum of the top and the bottom rows (where $\cup$ is the cup product and rest of the notation are as in Lemma~\ref{lem:dold:simple}):

\begingroup
\setlength{\belowcaptionskip}{1000pt}
   \fontsize{8.0pt}{12pt}\selectfont
\begin{displaymath}
\begin{tikzcd}
\rH^a(S_n, \bk) \otimes \rH^b(S_{m-1}, \bk) \ar{r}{\cup}    & \rH^{a+b}(S_n \times S_{m -1} \times S_1, \bk) \ar[bend left=15, dashrightarrow]{rrd}{\cor} \\
{\rH^a(S_n, \bk) \otimes \rH^b(S_{m}, \bk)} \ar[bend left = 15]{u}{\id \otimes \res} \ar[bend right = 15, swap]{d}{\res \otimes \id} \ar{r}{\cup} & \rH^{a+b}(S_n \times S_{m}, \bk) \ar[dashrightarrow]{r}{\cor}  \ar[dashrightarrow, swap]{u}{\res}  \ar[dashrightarrow]{d}{\res} & \rH^{a+b}(S_{n+m}, \bk) \ar[dashrightarrow]{r}{\res} & \rH^{a+b}(S_{n+m-1}\times S_1, \bk)  \\
\rH^a(S_{n-1}, \bk) \otimes \rH^b(S_{m}, \bk)   \ar{r}{\cup} & \rH^{a+b}(S_{n-1} \times S_1 \times S_{m}, \bk) \ar[bend right=15, dashrightarrow, swap]{rru}{\cor \circ \; \zeta^{\star}} \\
\end{tikzcd}
\end{displaymath}
\endgroup By Lemma~\ref{lem:dold:simple}, the dashed part has the property that the middle row is the sum of the top and the bottom rows. Also, the cup product commutes with restriction. Thus the top-left and the bottom-left squares commute. The assertion now follows. 
\end{proof}
Let $V$ be a representation of $S_*$. We constructed a connection $\nabla$ on the $\bD$-module $\Gamma^t(\cI(V))$. Write $\nabla$ still for the induced map on $\rR^{\bullet} \Gamma(\cI(V))$. Note that Proposition~\ref{prop:total-tensor-functor} implies that there is a natural isomorphism $\rR^{\bullet} \Gamma(\cI(V)) \cong \rR^{\bullet} \Gamma(V) \otimes \bE$ of $\bE$-modules.

\begin{proposition}
	\label{prop:compatible-connections}
Under the identification $\rR^{\bullet} \Gamma(\cI(V)) = \rR^{\bullet} \Gamma(V) \otimes \bE$, the map $\nabla$ corresponds to $1 \otimes \bd$.
\end{proposition}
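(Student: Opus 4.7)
The plan is to reduce to the case where $V$ is concentrated in a single degree $d$ and then trace the Shapiro and K\"unneth isomorphisms that underlie both sides of the claim.

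First, both the connection $\nabla$ on $\rR^{\bullet}\Gamma(\cI(V))$ and the symmetric monoidal isomorphism $\rR^{\bullet}\Gamma(\cI(V)) \cong \rR^{\bullet}\Gamma(V) \otimes \bE$ of Proposition~\ref{prop:total-tensor-functor} are additive and natural in $V$, so I may assume that $V$ is concentrated in a single degree $d$. Put $V'=V_d$, viewed as an $S_d$-equivariant object of $\Rep_{\bk}(S_{\ast})$ concentrated in degree $0$. By Proposition~\ref{prop:shapiro}, the Shapiro isomorphism is $\bD$-linear, so it identifies $\rR^{\bullet}\Gamma(\cI(V))$ with $\rR^{\bullet}\Gamma_{S_d}(\cI(V'))[d]$ as graded $\bD$-modules. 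Moreover, by the very construction of $\nabla$ given in the proof of Proposition~\ref{prop:induced-connection}, this identification carries $\nabla$ to the connection built in Proposition~\ref{prop:connection-on-J}, which in $S_{\ast}$-degree $n$ is the restriction map
$$\rH^t(S_n \times S_d,\, V_d) \;\longrightarrow\; \rH^t(S_{n-1} \times S_d,\, V_d).$$

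Next, since $\bk$ is a field, the K\"unneth theorem furnishes natural isomorphisms
$$\rH^{\bullet}(S_n \times S_d,\, V_d) \;\cong\; \rH^{\bullet}(S_d,\, V_d) \otimes_{\bk} \rH^{\bullet}(S_n,\, \bk),$$
which together amount to an $\bE$-linear isomorphism $\rR^{\bullet}\Gamma_{S_d}(\cI(V'))[d] \cong \rR^{\bullet}\Gamma(V) \otimes \bE$. Since the restriction map above touches only the $S_n$-factor, the naturality of the K\"unneth isomorphism in that factor forces it to correspond to $1 \otimes \bd$. To finish, I would observe that the composite of the Shapiro and K\"unneth isomorphisms used here is precisely the symmetric monoidal isomorphism built in the proof of Proposition~\ref{prop:total-tensor-functor}; the claim then follows.

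The main obstacle is pure bookkeeping: verifying that the Shapiro-transported connection of Proposition~\ref{prop:induced-connection} matches the Shapiro-plus-K\"unneth decomposition used to construct the tensor structure on $\rR^{\bullet}\Gamma$, and checking carefully that the K\"unneth isomorphism is natural under restriction on a single tensor factor. No input beyond the results already cited is needed; the proof is essentially an unwinding of definitions together with the naturality of K\"unneth.
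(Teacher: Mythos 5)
Your proof is correct and takes essentially the same route as the paper: reduce to $V$ concentrated in a single degree $d$, pass through the Shapiro isomorphism of Proposition~\ref{prop:shapiro} to $\rR^{\bullet}\Gamma_{S_d}(\cI(V'))[d]$ where $\nabla$ is restriction, then apply the K\"unneth isomorphism and observe that it commutes with restriction on the $S_n$-factor. The extra step you flag at the end (that Shapiro composed with K\"unneth is the monoidal isomorphism of Proposition~\ref{prop:total-tensor-functor}) is indeed how the paper's proof identifies the two sides, and it follows directly from the proof of that proposition.
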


\begin{proof}
It suffices to consider the case when $V$ is concentrated in degree $d$. With the notation of Proposition~\ref{prop:shapiro}, the identification is given by the composite
\begin{displaymath}
\rR^{\bullet} \Gamma(\cI(V))  \to \rR^{\bullet}\Gamma_{S_d}(\cI(V'))[d] \to \rR^{\bullet} \Gamma(V) \otimes \bE
\end{displaymath}
where the first map is the Shapiro isomorphism and the second map is the K\"unneth isomorphism. Recall that $\nabla$ is defined to correspond (via the Shapiro isomorphism) to the connection on $\rR^{\bullet}\Gamma_{S_d}(\cI(V'))[d]$, and the connection on $\rR^{\bullet}\Gamma_{S_d}(\cI(V'))[d]$ is given in degree $n$ by the restriction map (Proposition~\ref{prop:connection-on-J}) \[\rH^t(S_{n-d} \times S_d,  V) \to \rH^t(S_{n-d-1} \times S_d,  V). \] Since the K\"unneth isomorphism  commutes with restriction, we see that $\nabla$ corresponds to $1 \otimes \bd$. 
\end{proof}

\begin{proposition} \label{prop:compat-conn}
Let $V$ and $W$ be representations of $S_{\ast}$ and let $f \colon \cI(V) \to \cI(W)$ be a map of $\bA$-modules. Suppose $V$ is supported in degrees $<q$, where $q$ is a power of $p$. Then the map $\Gamma^t(\cI(V)) \to \Gamma^t(\cI(W))$ induced by $f$ is compatible with the $q$-connection $\nabla^q$, for all $t \ge 0$.
\end{proposition}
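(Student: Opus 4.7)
The plan is to combine Proposition~\ref{prop:compatible-connections} (which identifies $\nabla$ with $1 \otimes \bd$) with the $\bE$-linearity of the induced map, reducing the claim to a simple $\FI$-degree count.

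First, since $f$ is a map of $\bA$-modules and $\rR^\bullet \Gamma$ is symmetric monoidal with $\rR^\bullet \Gamma(\bA)=\bE$ (Proposition~\ref{prop:total-tensor-functor}), the induced map $F := \rR^\bullet \Gamma(f)$ is $\bE$-linear. Under the identifications $\rR^\bullet \Gamma(\cI(V)) = \rR^\bullet \Gamma(V) \otimes \bE$ and $\rR^\bullet \Gamma(\cI(W)) = \rR^\bullet \Gamma(W) \otimes \bE$ in which $\bE$ acts on the right factor, the map $F$ is therefore determined by its restriction
\begin{displaymath}
F_0 \colon \rR^\bullet \Gamma(V) \to \rR^\bullet \Gamma(W) \otimes \bE
\end{displaymath}
to $\rR^\bullet \Gamma(V) \otimes 1$ via the rule $F(v \otimes e) = F_0(v) \cdot e$.

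Second, I would observe that $\bd^q$ is a derivation on $\bE$. Indeed, $\bd$ is a derivation on $\bE$ (proved immediately before Proposition~\ref{prop:compatible-connections}), and in characteristic $p$ the binomial coefficients $\binom{p}{i}$ vanish for $0<i<p$, so the $p$-fold iterate of any derivation is again a derivation; iterating, $\bd^q$ is a derivation for $q = p^r$. By Proposition~\ref{prop:compatible-connections}, $\nabla^q$ corresponds to $1 \otimes \bd^q$ on both sides. Writing $F_0(v) = \sum_i w_i \otimes e'_i$, a straightforward computation using $\bE$-linearity and the Leibniz rule for $\bd^q$ yields
\begin{displaymath}
\nabla^q F(v \otimes e) - F \nabla^q(v \otimes e) \;=\; \sum_i w_i \otimes \bd^q(e'_i)\, e.
\end{displaymath}

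Finally, the degree count closes the argument. Since $F$ preserves $\FI$-degree, so does $F_0$; thus for $v$ of $\FI$-degree $|v|$ we have $|w_i|+|e'_i|=|v|$ for each term in $F_0(v)$. Because $V$ is supported in $\FI$-degrees $<q$, we get $|e'_i| \le |v| < q$. But $\bd$ lowers $\FI$-degree by one, and $\bE$ is supported in non-negative $\FI$-degrees, so $\bd^q(e'_i)$ lives in $\FI$-degree $|e'_i|-q<0$ and therefore vanishes. Thus the commutator above is zero and $F$ is compatible with $\nabla^q$, as desired. The whole argument is a quick assembly of the preceding structural results; the only slightly delicate point is correctly tracking the $\bE$-action and the $\FI$-grading through the monoidal identification, but no new ideas are needed beyond what has already been established.
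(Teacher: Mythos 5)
Your proof is correct and follows essentially the same route as the paper's: identify $\rR^\bullet\Gamma(\cI(V)) \cong \rR^\bullet\Gamma(V)\otimes\bE$ via Proposition~\ref{prop:total-tensor-functor}, use $\bE$-linearity of the induced map, translate $\nabla^q$ into $1\otimes\bd^q$ via Proposition~\ref{prop:compatible-connections}, and kill the commutator by the $\FI$-degree bound on the $\bE$-components of $F_0(v)$. The paper's write-up is slightly terser (it computes $\nabla^q f(a\otimes e)$ directly rather than isolating the commutator), but the argument is the same; your explicit remark that $\bd^q$ is a derivation in characteristic $p$ is exactly the fact the paper already recorded when introducing $d_q$ in \S\ref{sec:connections}.
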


\begin{proof}
The map $\rR^{\bullet} \Gamma(\cI(V)) \to \rR^{\bullet} \Gamma(\cI(W))$ induced by $f$ is one of $\bE$-modules. Note that $\rR^{\bullet} \Gamma(\cI(V))$ is isomorphic to $\rR^{\bullet} \Gamma(V) \otimes \bE$ and $\nabla^q$ is compatible with the $q$-fold iterate of $1 \otimes \bd$ (Proposition~\ref{prop:compatible-connections}), and an analogous statement holds for the target. Now suppose $a \in \rR^{\bullet} \Gamma(V)$ and $f(a) = \sum_i b_i \otimes e_i$ with $e_i \in \bE$. By our choice of $q$, we can assume that the degree of each $e_i$ is smaller than $q$. In particular, $\bd^q(e_i) =0$.  Thus for any $e \in \bE$,  we have \[\nabla^q f(a \otimes e) = \sum_i  b_i \otimes \bd^q(e_i e) = \sum_i  b_i \otimes e_i \bd^q( e) = f(a ) \otimes \bd^q( e) = f(\nabla^q(a \otimes e)).\]  This completes the proof.
\end{proof}


\begin{remark}
Note that the statement of the proposition is about a single cohomology group, but the proof (which is the simplest one we know) makes use of the $\bE$-module structure on all cohomology groups.
\end{remark}

\begin{proof}[Proof of Theorem~\ref{thm:main-positive}]
By Corollary~\ref{cor:derived-saturation}, $\rR\bS(M)$ is quasi-isomorphic to a bounded complex $I^{\bullet}$ of semi-induced modules generated in degree $\le \delta(M)$. By Proposition~\ref{prop:induced-resolution} (and the following discussion), $I^{\bullet}$ is quasi-isomorphic to a bounded complex of induced modules generated in degree $\le \delta(M)$. The result now follows from Proposition~\ref{prop:compat-conn} and Proposition~\ref{prop:q-connection}.
\end{proof}

\section{Applications}
\label{sec:applications}

\subsection{Periodicity in the cohomology of Specht modules} 
\label{sec:specht}

Assume that $\bk$ is a field of characteristic $p$. Recall that $\bM_{\mu}$ is the Specht module corresponding to a partition $\mu$ and that $\mu[n]$ is the partition $(n -|\mu|, \mu)$ (defined only for $n \ge \mu_1 + |\mu|$). We have the following:


\begin{theorem}
\label{thm:hemmer}
Let $\mu$ be a partition of $d$ and $q$ be a power of $p$ strictly larger than $d$. If $n \ge \max(2 d +\mu_1, 2 t + d)$, then we have
 \[\rH^t(S_n, \bM_{\mu[n]}) = \rH^t(S_{n+q}, \bM_{\mu[n+q]}).\]  
\end{theorem}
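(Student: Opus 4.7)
The plan is to construct a finitely generated $\FI$-module $M$ with $M_n \cong \bM_{\mu[n]}$ for $n \ge d + \mu_1$ (and $M_n = 0$ otherwise) and then invoke Theorem~\ref{thm:quant} directly. To construct $M$, I would realise $\bM_{\mu[n]}$ as the span of polytabloids inside the Young permutation module $M^{\mu[n]}$. The permutation modules $M^{\mu[n]}$ themselves assemble into an $\FI$-module, as a direct summand of an induced module built from copies of $\cI(\mathrm{triv})$, and the polytabloid description is natural enough that the inclusions $[n] \hookrightarrow [n+1]$ send polytabloids to polytabloids, inducing compatible $S_n$-equivariant transition maps $\bM_{\mu[n]} \to \bM_{\mu[n+1]}$.

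With $M$ in hand, I would bound the three invariants entering Theorem~\ref{thm:quant}. For the degree, the hook-length formula (valid over $\bZ$ before reducing mod $p$) gives that $\dim_{\bk} \bM_{\mu[n]}$ is a polynomial in $n$ of degree exactly $d = |\mu|$, so $\delta(M) = d$. For generation, $M$ is supported in $\FI$-degrees $\ge d + \mu_1$ and one checks (via the polytabloid picture) that $M$ is generated as an $\FI$-module by $\bM_{\mu[d+\mu_1]}$, so $g(M) \le d + \mu_1$. The crucial estimate is $r(M) \le d$, so that $g(M) + r(M) \le 2d + \mu_1$; I would obtain this by exhibiting $M$ as the kernel of an explicit map between two semi-induced $\FI$-modules generated in degrees $\le d + \mu_1$, using a Garnir-type presentation of Specht modules whose defining relations are uniform in $n$ and involve only boxes of $\mu$ together with a single box of the first row.

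Once the three bounds $\delta(M) = d$, $g(M) \le d + \mu_1$, $r(M) \le d$ are in place, Theorem~\ref{thm:quant} applied to $M$, with $q$ the smallest power of $p$ strictly exceeding $d$, yields
\[\dim_{\bk} \rH^t(S_n, M_n) = \dim_{\bk} \rH^t(S_{n+q}, M_{n+q})\]
for all $n \ge \max(g(M) + r(M),\, 2t + \delta(M)) \ge \max(2d + \mu_1,\, 2t + d)$, which is exactly the statement of the theorem. The main technical obstacle is the bound $r(M) \le d$: this is really a structural statement about Specht modules as $\FI$-modules in arbitrary characteristic, and once it is established the rest of the argument is a straightforward application of the quantitative main theorem.
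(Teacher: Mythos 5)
Your construction of the $\FI$-module realizing the Specht modules is exactly the paper's (it embeds $L_\mu$ inside $\cI(V_\mu)$ using a consistent family of polytabloids), and the bounds $\delta(M)=d$ and $g(M)=d+\mu_1$ are correct. The problem is the claimed bound $r(M)\le d$, which cannot hold: since $L_\mu$ is generated in the single degree $d+\mu_1$, the minimal free cover $F_0=\cI(\bM_{\mu[d+\mu_1]})\to L_\mu$ is an isomorphism in degree $d+\mu_1$, so $\Tor_1^{\bA}(L_\mu,\bk)$ is supported in degrees $\ge d+\mu_1+1$. Thus $r(L_\mu)> d+\mu_1>d$ whenever $L_\mu$ is not induced (which it is not in general), and a Garnir-style presentation cannot push relations below the degree of the generators. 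In fact the bound the paper proves is $r(L_\mu)\le 2d+\mu_1+1$, so plugging directly into Theorem~\ref{thm:quant} would give onset at $n\ge g+r\approx 3d+2\mu_1+1$, which is strictly worse than the theorem's stated $2d+\mu_1$.

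The paper's actual route avoids Theorem~\ref{thm:quant} entirely. It uses the short exact sequence $0\to L_\mu\to\cI(V_\mu)\to K\to 0$, where $\cI(V_\mu)$ is generated in degree $d$ (not $d+\mu_1$), so $g(K)\le d$ and $r(K)\le g(L_\mu)=d+\mu_1$. Resolving $K$ by Theorem~\ref{thm:homology-bound} and splicing yields a complex $0\to L_\mu\to I^\bullet$ of semi-induced modules of degree $\le d$, exact in degrees $\ge 2d+\mu_1$. One then gets periodicity from the general machinery directly: Proposition~\ref{prop:compat-conn} provides a $q$-connection on $\Gamma^t(I^\bullet)$, Theorem~\ref{thm:main}(c) gives $\lambda(\Gamma^t(I^\bullet))\le 2t+d$, and Proposition~\ref{prop:stability-period} then gives periodicity for $n\ge 2t+d$; the exactness range $\ge 2d+\mu_1$ supplies the second threshold. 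If you want to repair your proposal, replace the attempt to bound $r(M)$ by a construction of this explicit semi-induced resolution, keyed off the embedding of $L_\mu$ into the permutation-module induced $\FI$-module, and argue from the onset of exactness of that complex rather than from the numerical bound of Theorem~\ref{thm:quant}.
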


We start with a proposition.
\begin{proposition} 
\label{prop:L-lambda}
Suppose $\mu$ is a partition of $d$. There is an $\FI$-module $L_{\mu}$ such that the following hold: \begin{enumerate}
\item $(L_{\mu})_n = \bM_{\mu[n]}$
\item $\delta(L_{\mu}) = d$ 
\item $g(L_{\mu}) = d + \mu_1$
\item $r(L_{\mu}) \le 2 d + \mu_1 + 1$
\item There exists a finite length complex $0 \to L_{\mu} \to I^{\bullet}$ exact in degrees $\ge 2 d + \mu_1 $ where, for each $i$, $I^{i}$ is a semi-induced module with $\delta(I^i) \le d$.
\end{enumerate}
\end{proposition}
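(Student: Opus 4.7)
The plan is to realize $L_\mu$ as an explicit $\FI$-submodule of an induced module, and then derive the invariants from a short exact sequence together with Castelnuovo--Mumford regularity bounds for $\FI$-modules and Theorems~\ref{thm:semires} and~\ref{thm:homology-bound}.

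\textbf{Construction.} Let $V = M^\mu$ be the Young permutation module of $S_d$, viewed as an $S_\ast$-representation concentrated in degree $d$. Then
\begin{displaymath}
\cI(V)_n = \Ind_{S_d \times S_{n-d}}^{S_n}(M^\mu \boxtimes \mathrm{triv}) \cong M^{\mu[n]},
\end{displaymath}
the Young permutation module of shape $\mu[n]$. Define $L_\mu \subseteq \cI(V)$ to be the $\FI$-submodule whose $n$-th term is the classical Specht submodule $\bM_{\mu[n]} \subseteq M^{\mu[n]}$ spanned by polytabloids. The transition map associated to $[n] \hookrightarrow [n+1]$ sends a tabloid of shape $\mu[n]$ to the tabloid obtained by appending $n+1$ to the first row, which preserves column stabilizers and thus carries polytabloids to polytabloids. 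Noetherianity of $\cI(V)$ makes $L_\mu$ finitely generated. This verifies (a); (b) follows from the hook length formula, which after isolating the $n-d-\mu_1$ trivial first-row hooks (contributing $(n-d-\mu_1)!$) from the $\mu_1$ linear-in-$n$ first-row hooks and the $n$-independent hooks below the first row, yields $\dim_\bk \bM_{\mu[n]}$ as a polynomial in $n$ of degree $d$.

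\textbf{Presentation.} For (c), the inequality $g(L_\mu) \ge d + \mu_1$ holds since $\mu[n]$ is defined only for $n \ge d + \mu_1$, so $(L_\mu)_n = 0$ below that degree and $(L_\mu)_{d+\mu_1} \ne 0$. Conversely, let $H = S_{\mu'_1+1} \times \cdots \times S_{\mu'_{\mu_1}+1} \subseteq S_{d+\mu_1}$ be the column stabilizer of a fixed $\mu[d+\mu_1]$-tableau $T_0$, and set $W = \Ind_H^{S_{d+\mu_1}}(\mathrm{sgn})$, placed in degree $d + \mu_1$. The polytabloid assignment $1 \otimes 1 \mapsto e_{T_0}$ defines an $S_{d+\mu_1}$-equivariant map $W \to \cI(V)_{d+\mu_1}$ with image $\bM_{\mu[d+\mu_1]}$, and by Proposition~\ref{prop:tensor-hom-adjunction} it extends uniquely to an $\FI$-module map $\phi \colon \cI(W) \to \cI(V)$. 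Its image at level $n$ is the $S_n$-span of the image of $e_{T_0}$, which is $\bM_{\mu[n]}$. Thus $L_\mu = \im\phi$ and $g(L_\mu) \le d + \mu_1$.

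\textbf{Relations.} For (d), let $C = \coker\phi = \cI(V)/L_\mu$. The presentation $\cI(W) \xrightarrow{\phi} \cI(V) \to C \to 0$ gives $g(C) \le d$ and $r(C) \le d + \mu_1$. Since $\cI(V)$ is $\bA$-flat (semi-induced modules have vanishing higher Tor, \cite[Theorem~B]{ramos}), the long exact sequence of Tor applied to $0 \to L_\mu \to \cI(V) \to C \to 0$ yields an isomorphism
\begin{displaymath}
\Tor^{\bA}_1(L_\mu, \bk) \cong \Tor^{\bA}_2(C, \bk).
\end{displaymath}
The Castelnuovo--Mumford regularity bound for $\FI$-modules \cite[Theorem~F]{castelnuovo-regularity} provides $\deg \Tor^{\bA}_2(C, \bk) \le g(C) + r(C) + 1 \le 2d + \mu_1 + 1$, so $r(L_\mu) \le 2d + \mu_1 + 1$, proving (d). For (e), apply Theorem~\ref{thm:semires} to $C$ to obtain a complex $0 \to C \to I_C^0 \to \cdots \to I_C^d \to 0$ with torsion cohomology and each $I_C^k$ semi-induced of degree $\le d - k \le d$. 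By Theorem~\ref{thm:homology-bound}, this complex is exact in degrees $\ge g(C) + \max(g(C), r(C)) \le 2d + \mu_1$. Splicing with $0 \to L_\mu \to \cI(V) \to C \to 0$ yields the desired complex $0 \to L_\mu \to I^0 \to I^1 \to \cdots$ with $I^0 = \cI(V)$ and $I^{k+1} = I_C^k$; every term is semi-induced with $\delta \le d$, and the spliced complex inherits exactness in degrees $\ge 2d + \mu_1$, establishing (e).

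\textbf{Main obstacle.} The principal difficulty is (d): a direct attack via generators of $\ker\phi$ (the Garnir relations) is awkward to control in the $\FI$-module generation sense, so routing through the cokernel $C$ and invoking the regularity machinery is essential. The rest of the proof is bookkeeping around the Specht/permutation inclusion and the existing resolution technology.
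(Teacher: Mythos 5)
Your proof is correct and follows the same overall architecture as the paper's: construct $L_\mu$ as a sub-$\FI$-module of $\cI(V_\mu)$ using the stability of polytabloids under the transition maps, form the short exact sequence $0 \to L_\mu \to \cI(V_\mu) \to C \to 0$, bound $r(L_\mu)$ by identifying $\Tor_1^{\bA}(L_\mu,\bk)$ with $\Tor_2^{\bA}(C,\bk)$ and invoking the Church--Ellenberg regularity bound, and splice the semi-induced resolution of $C$ from Theorems~\ref{thm:semires} and~\ref{thm:homology-bound} onto the short exact sequence to get part~(e). Two sub-parts are argued differently, and both of your variants are valid. For part~(b) you compute $\delta(L_\mu) = d$ directly from the hook length formula (which is characteristic-independent, since Specht modules are reductions of a fixed $\bZ$-lattice), isolating the $(n-d-\mu_1)!$ factor and the $\mu_1$ linear-in-$n$ first-row hooks to see that $\dim_\bk \bM_{\mu[n]}$ is a degree-$d$ polynomial; the paper instead shows abstractly that $\delta(L_\mu) < d$ would force the inclusion $L_\mu \hookrightarrow \cI(V_\mu)$ to vanish by factoring it through a large shift. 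Your hook length argument is more concrete and arguably cleaner, but it specifically relies on property~(f) of $\delta$ (the polynomial-degree characterization over a field), whereas the paper's argument is purely structural. For part~(c) you exhibit an explicit surjection $\cI(W) \twoheadrightarrow L_\mu$ from a sign-induced module $W = \Ind_H^{S_{d+\mu_1}}(\mathrm{sgn})$ placed in degree $d+\mu_1$, whereas the paper defines $L_\mu$ as the sub-$\FI$-module generated by a single chosen polytabloid $eT_{d+\mu_1}$, making $g(L_\mu) \le d+\mu_1$ automatic; these are the same observation packaged differently, and the extra structure of your map $\phi$ is not really needed once you establish that the $S_n$-span of the pushed-forward polytabloid is all of $\bM_{\mu[n]}$ (the key column-group computation, which both proofs perform).
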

\begin{proof} We first construct $L_{\mu}$. Let $V_{\mu}$ be the permutation module of $S_d$ corresponding to the partition $\mu$. Now note that for $n \ge d + \mu_1$,  $\cI(V_{\mu})_n$ is naturally a permutation module for the partition $\mu[n]$. For each $n\ge d + \mu_1$, inductively pick a Young tableau $T_n$ of shape $\mu[n]$ with entries in $[n]$ such that $T_n$ and $T_m$ agree on $\mu[d+\mu_1]$. We can think of $\cI(V_{\mu})_n$ as the $\bk[S_n]$-module generated by $T_n$. Let $C_{\mu[n]}$ be the column subgroup corresponding to $T_n$. By construction, we have $C_n = i_{d+\mu_1, n} C_{d+\mu_1}$ where $i_{m,n} \colon [m] \to [n]$ is the natural inclusion. Thus if $e = \sum_{\sigma \in C_{d + \mu_1}} \sgn(\sigma) \sigma$ then we have \[\bM_{\mu[n]} = (i_{d+\mu_1, n} e) T_n = i_{d+\mu_1, n}(e T_{d + \mu_1}) = i_{d+\mu_1, n} \bM_{d + \mu_1}. \] This shows that the sub-$\FI$-module of $\cI(V_{\mu})$ generated in degree $d + \mu_1$ by $e T_{d + \mu_1}$ is isomorphic to $\bM_{\mu[n]}$ in every degree $n \ge d + \mu_1$. Define $L_{\mu}$ to be this sub-$\FI$-module. This already proves (a) and (c). Since $L_{\mu}$ is a submodule of $\cI(V_{\mu})$, we have $\delta(L_{\mu}) \le \delta(\cI(V_{\mu}))	=d$. Note that since $\cI(V_{\mu}) \to \Sigma^n \cI(V_{\mu})$ admits a section, the inclusion $\iota \colon L_{\mu} \to \cI(V_{\mu})$ factors through $\Sigma^n L_{\mu}$. Thus if $\delta(L_{\mu}) < d$, then for large $n$, $\Sigma^n L_{\mu}$ is generated in degree $<d$ (follows from Theorem~\ref{thm:shift}) which implies $\iota =0$, a contradiction. This proves (b).  Now consider the exact sequence \[ 0 \to L_{\mu} \to \cI(V_{\mu}) \to K \to 0. \]  We have $g(K) \le d$ and $r(K) \le g(L_{\mu}) = d + \mu_1$.  So by Theorem~\ref{thm:homology-bound}, $K$ admits a complex as in Theorem~\ref{thm:semires} exact in degrees $\ge 2 d + \mu_1$. (e) now follows from the exact sequence above. In \cite[Theorem~A]{castelnuovo-regularity}, it is proven that Castelnuovo-regularity of an $\FI$-modules $M$ is at most $g(M) + r(M) -1$. It follows that $r(L_{\mu}) = \deg \Tor^1_{\bA}(L_{\mu}, \bk) = \deg \Tor^2_{\bA}(K, \bk) \le g(K) + r(K) + 1$. Thus $r(L_{\mu}) \le 2 d + \mu_1 + 1$, completing the proof.
\end{proof}

\begin{proof}[Proof of Theorem~\ref{thm:hemmer}] Let $I^{\bullet}$ be the complex in Proposition~\ref{prop:L-lambda}(e). Then we have $\ul{\Gamma}^t(L_{\mu}) = \Gamma^t(I^{\bullet})$, and the natural map $\Gamma^t(L_{\mu}) \to \ul{\Gamma}^t(L_{\mu})$ is an isomorphism in degrees $\ge 2 d +\mu_1$. Now by Proposition~\ref{prop:compat-conn}, $\Gamma^t(I^{\bullet})$ admits a $q$-connection. We also have $\lambda(\Gamma^t(I^{\bullet})) \le 2 t + d$ by Theorem~\ref{thm:main}(c). Thus by Proposition~\ref{prop:stability-period}, we see that $\Gamma^t(I^{\bullet})_n = \Gamma^t(I^{\bullet})_{n + q}$ for $n \ge 2t +d$. The result now follows because $\Gamma^t(I^{\bullet})_n = \Gamma^t(L_{\mu})_n = \rH^t(S_n, \bM_{\mu[n]})$ for $n  \ge 2 d + \mu_1$.
\end{proof}

\subsection{Integral cohomology of unordered configuration spaces}
\label{sec:config}
Let $\cM$ be a manifold. The unordered configuration space $\uconf_n(\cM)$ is  given by \[ \uconf_n(\cM) \coloneq \{ (P_1, P_2, \ldots, P_n) \in \cM^n \colon P_i \neq P_j  \}/S_n.   \] Some recent results have shown that under certain mild assumption on $\cM$, the cohomology groups $\rH^t(\uconf_n(M), \bF_p)$ are periodic (see  \cite{jeremy} for the latest result). We have an integral version in this direction: 

\begin{theorem}
	\label{thm:configuration-spaces}
Suppose $\bk$ is a commutative noetherian ring and fix a $t\ge 0$. Let $\cM$ be a connected manifold of dimension $\ge 2$ with the homotopy type of a finite CW complex. Then there exists a finitely presented $\bD$-module $M$ such that \[\rH^t(\uconf_n(\cM), \bk) \cong M_n\] for $n \gg 0$. 
\end{theorem}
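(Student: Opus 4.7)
The plan is to reduce to the \emph{ordered} configuration space and use the principal covering $\mathrm{Conf}_n(\cM) \to \uconf_n(\cM)$ with deck group $S_n$. Since the action is free, we have the identification
\begin{displaymath}
\rH^t(\uconf_n(\cM), \bk) = \rR^t\Gamma\bigl(C^{*}(\mathrm{Conf}_n(\cM); \bk)\bigr),
\end{displaymath}
where $\Gamma$ denotes $S_n$-invariants. As $n$ varies, $n \mapsto \mathrm{Conf}_n(\cM)$ is naturally contravariant in $\FI$ (an injection $[n] \hookrightarrow [m]$ induces the forget-the-extra-points map $\mathrm{Conf}_m(\cM) \to \mathrm{Conf}_n(\cM)$), so the singular cochain functor $C^{*}(\mathrm{Conf}_n(\cM); \bk)$ assembles into an $\FI$-complex. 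Applying the hyperderived invariants functor to this $\FI$-complex yields a complex of $\bD$-modules whose $t$-th cohomology is $\bigoplus_n \rH^t(\uconf_n(\cM), \bk)$.

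The first key input is that for each fixed $q \ge 0$, the $\FI$-module $V^{q}$ defined by $V^{q}_n = \rH^q(\mathrm{Conf}_n(\cM); \bk)$ is finitely generated. This is the representation stability theorem of Church--Ellenberg--Farb for configuration spaces; the hypotheses $\dim \cM \ge 2$ and finite CW type guarantee that the result holds for closed manifolds and over an arbitrary noetherian ring $\bk$.

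I would then invoke the hyperderived spectral sequence
\begin{displaymath}
E_2^{p,q} = \Gamma^p(V^{q}) \Longrightarrow \bigoplus_n \rH^{p+q}(\uconf_n(\cM); \bk).
\end{displaymath}
Since the spectral sequence arises from filtering an $\FI$-complex by columns and applying the $\FI$-derived functor $\rR\Gamma$, all differentials and edge maps are $\bD$-linear. By Theorem~\ref{thm:main}(a), each $E_2^{p,q}$ is nearly finitely presented as a $\bD$-module. For fixed total degree $t$, only finitely many pairs with $p+q=t$ contribute, so $E_{\infty}^{\bullet,\bullet}$ consists of finitely many subquotients of nearly finitely presented $\bD$-modules, and $\bigoplus_n \rH^t(\uconf_n(\cM); \bk)$ is an iterated extension thereof. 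Because $\bD$ is coherent, finitely presented $\bD$-modules form an abelian category, and a straightforward verification shows that the class of nearly finitely presented $\bD$-modules is closed under subquotients and extensions. Thus $\bigoplus_n \rH^t(\uconf_n(\cM); \bk)$ is nearly finitely presented, and we take $M$ to be its weak fp-envelope.

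The main obstacle is justifying the $\bD$-linearity of the spectral sequence cleanly. I would handle this by constructing a Cartan--Eilenberg resolution of the $\FI$-cochain complex in the Grothendieck abelian category $\Mod_{\FI}$, applying the invariants functor $\Gamma$ termwise to obtain a double complex of $\bD$-modules, and observing that the two standard filtrations produce $\bD$-module spectral sequences whose $E_2^{p,q}$ is $\Gamma^p(V^q)$. A secondary concern is citing the Church--Ellenberg--Farb finite generation in the required generality (closed manifolds, general noetherian coefficients), which is now standard in the literature.
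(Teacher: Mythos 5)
Your overall setup mirrors the paper's: both use the free $S_n$-action on $\mathrm{Conf}_n(\cM)$, assemble the cochains into an $\FI$-complex (equivalently, work with the complex $\cU^{\bullet}$ of [Nag, \S 4]), and appeal to representation stability for $\rH^q(\mathrm{Conf}_n(\cM); \bk)$. The divergence occurs in the last step. You run the hypercohomology spectral sequence $E_2^{p,q} = \Gamma^p(V^q)$ directly for $\Gamma$ and try to finish with the claim that ``the class of nearly finitely presented $\bD$-modules is closed under subquotients and extensions,'' which you call a straightforward verification. This claim is \emph{false} for subquotients, and this breaks the argument.

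Concretely, take $\bk$ a field of characteristic $p$ and let $I=(y_0,y_2,y_4,\dots)\subset\bD$, where $y_i=x^{[p^i]}$. Then $I$ is a graded $\bD$-submodule of the finitely presented module $\bD$, but $I$ is not nearly finitely presented: $\dim_{\bk} I_n$ equals $1$ or $0$ according to whether $n$ has a nonzero base-$p$ digit in some even position, and this indicator is not eventually periodic modulo any $p^r$. Since every finitely presented $\bD$-module over a field of characteristic $p$ has eventually $p^r$-periodic Hilbert function (Proposition~\ref{prop:stability-period}), $I$ cannot share a truncation with any finitely presented module. So ``subquotient of nearly f.p.\ $\Rightarrow$ nearly f.p.'' fails, and in fact even the weaker statement you actually need---that kernels and cokernels of $\bD$-linear maps between nearly f.p.\ modules are nearly f.p.---is not obvious, because a map between the truncations $\tau_{\ge n}$ of two finitely presented modules need not extend to a map of the modules themselves.

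The paper avoids this entirely by \emph{saturating first}: it replaces $\cU^{\bullet}$ by $\rR\bS(\cU^{\bullet})$, which Corollary~\ref{cor:derived-saturation} and Theorem~\ref{thm:triangle-induced} identify with a bounded complex $I^{\bullet}$ of induced modules. Proposition~\ref{prop:induced-connection} shows each $\Gamma^x(I^y)$ is genuinely finitely presented (in fact of the form $M_0\otimes_\bk\bD$), so coherence of $\bD$---which \emph{does} give closure of the finitely presented modules under kernels, cokernels, and extensions---yields that $\Gamma^t(\rR\bS(\cU^{\bullet}))$ is finitely presented. The comparison back to $\Gamma^t(\cU^{\bullet})$ is then immediate because the cone of $\cU^{\bullet}\to\rR\bS(\cU^{\bullet})$ is a bounded complex of finitely generated torsion modules, hence contributes only in finitely many degrees. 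Your argument can be patched in precisely this spirit: run the spectral sequence for $\ul{\Gamma}$ (i.e.\ with $E_2^{p,q}=\ul{\Gamma}^p(V^q)$, which is finitely presented by Theorem~\ref{thm:main}(a)), conclude by coherence that $\ul{\Gamma}^t$ of the complex is finitely presented, and then compare $\Gamma^t$ to $\ul{\Gamma}^t$ via the torsion cone. But as written the proof has a real gap. Two smaller points: Theorem~\ref{thm:main}(a) establishes finite presentation of $\ul{\Gamma}^t$, not near finite presentation of $\Gamma^t$; the latter is the Corollary that follows it. And the non-orientable case of finite generation of $\rH^q(\mathrm{Conf}_n(\cM);\bk)$ requires the input from [MW], which the paper cites explicitly.
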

\begin{proof}
In \cite[\S 4]{nagpal}, it is shown that there is a bounded below complex $\cU^{\bullet}$ of finitely generated $\FI$-modules such that $\Gamma^s(\cU^{\bullet})_n = \rH^s(\uconf_n(\cM), \bk)$ for all $s$ (The proof in the reference is only given when $\bk$ is a field of positive characteristic but it works without change for any noetherian ring. Moreover, the reference assumed orientability of $\cM$ which is not necessary; see \cite[Theorem A.12]{MW}). Since $t$ is fixed, we may assume without loss of generality that $\cU^{\bullet}$ is bounded and that $\Gamma^t(\cU^{\bullet})_n = \rH^t(\uconf_n(\cM), \bk)$. By Corollary~\ref{cor:derived-saturation}, $\rR\bS(\cU^{\bullet})$ is quasi-isomorphic to a bounded complex $I^{\bullet}$ of induced $\FI$-modules. Since $\bD$ is coherent and $\Gamma^x(I^y)$ is a finitely presented $\bD$-module (Proposition~\ref{prop:induced-connection}), we conclude that $\Gamma^t(\rR\bS(\cU^{\bullet}))$ is finitely presented. The result now follows because the cone of the map $\cU^{\bullet} \to \rR\bS(\cU^{\bullet})$ is quasi-isomorphic to a bounded complex of finitely generated torsion modules (Theorem~\ref{thm:semires} and Corollary~\ref{cor:derived-saturation}) and hence is supported in finitely many degrees. 
\end{proof}

\begin{remark}
	The action of the divided power algebra on $\oplus_{n \ge 0} \rH^t(\uconf_n(\cM), \bk)$ is given by the transfer map: let $f_i \colon \cM^{n+1} \to \cM^n$ be the map that forgets the $i$-th coordinate. Then the multiplication by $x^{[1]}$ map $\rH^t(\uconf_n(\cM), \bk) \to \rH^t(\uconf_{n+1}(\cM), \bk)$ is given by $\sum_{1 \le i \le n+1} f_i^{\ast}$. By naturality of our argument, $M$ (as in the theorem above) can be taken so that the action of the divided power algebra on $M$ agrees (in high enough degree) with the one given by the transfer map.
\end{remark}

\begin{example} 
\label{example:sphere}	Recall that the braid group $B_n$ on $n$ strands is given by the presentation \[ B_n =  \langle \sigma_1, \ldots, \sigma_{n-1} \; \vert \;   \sigma_i \sigma_j = \sigma_j \sigma_i \text{ if } |i - j| >1, \;   \sigma_j \sigma_i \sigma_j = \sigma_i \sigma_j \sigma_i \text{ if } |i-j| = 1 \rangle. \]  It is known that the fundamental group of $\uconf_n(\cS^2)$ is the group obtained from $B_n$ by imposing the additional relation $\sigma_1 \sigma_2\cdots\sigma_{n-1} \sigma_{n-1} \cdots \sigma_2\sigma_1 =1$ (see \cite{sphere}). This relation comes from the fact that a point looping around all the other $n-1$ points in $\uconf_n(\cS^2)$ is contractible (see the figure below).
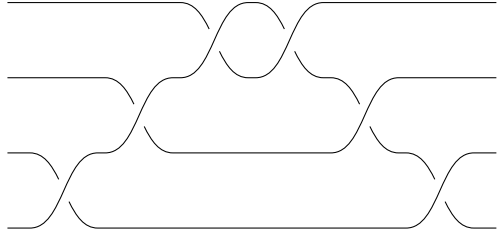
\begin{figure}[h]
\centering
\begin{tikzpicture}
\braid[rotate=90, number of strands=4] (braid) a_1 a_2 a_3 a_3 a_2 a_1; 
\end{tikzpicture} 
\caption{Contractible loop in $\uconf_4(\cS^2)$}
\end{figure} 

Since $\uconf_n(\cS^2)$ is path connected, we have for $n \ge 2$, \[\rH_1(\uconf_n(\cS^2), \bZ) \cong \pi_1(\uconf_n(\cS^2))^{\mathrm{ab}}  \cong \bZ/(2n-2).\] The transfer map $\rH_1(\uconf_n(\cS^2), \bZ) \to \rH_1(\uconf_{n-1}(\cS^2), \bZ)$ is given on the class $[\sigma_1]$ in the abelianization by $[\sigma_1] \mapsto (n-2) [\sigma_1]$  (we get 0 if we remove the first or the second strand, and we get $[\sigma_1]$ if we remove any other strand). By the universal coefficient theorem (also see \cite[\S 2.2]{napolitano}),  $\rH^2(\uconf_n(\cS^2), \bZ)$ is naturally isomorphic to \[\Hom_{\bZ}(\rH_1(\uconf_{n}(\cS^2), \bZ), \bQ/\bZ) \cong \bZ/(2n-2),\] and hence the transfer map $\rH^2(\uconf_{n-1}(\cS^2), \bZ) \to \rH^2(\uconf_n(\cS^2), \bZ)$ is given by the following diagram 
	\begin{displaymath}
	\begin{tikzcd}[column sep=1in]
	\rH^2(\uconf_{n-1}(\cS^2), \bZ) \ar{r}{transfer}  \ar{d}{\cong} & \rH^2(\uconf_{n}(\cS^2), \bZ) \ar{d}{\cong} \\
\bZ/(2n-4) \ar{r}{[1] \mapsto [n-1]}  & \bZ/(2n-2) \\
	\end{tikzcd}
	\end{displaymath} It follows that $M$, as in Theorem~\ref{thm:configuration-spaces}, can be taken to be the cokernel of the map $\bD[2]  \to \bD[1]$ given by $x^{[0]} \mapsto 2x^{[1]}$. 
\end{example}

\end{document}